\newtheorem{definition}{Definition}[section]
\newtheorem{corollary}[definition]{Corollary}
\newtheorem{prop}[definition]{Proposition}
\newtheorem{theorem}[definition]{Theorem}
\newtheorem{lemma}[definition]{Lemma}
\newtheorem{remark}[definition]{Remark}
\newtheorem{assumption}{Assumption}[section]
\date{}
\begin{document}
%\begin{sloppypar} %自动换行、对齐
\baselineskip 18pt
\bibliographystyle{plain} %参考文献；其中plain是按字母的次序排列

\title[ASHBM for linear systems]{On adaptive stochastic heavy ball momentum for solving linear systems}

\author{Yun Zeng}
\address{School of Mathematical Sciences, Beihang University, Beijing, 100191, China. }
\email{zengyun@buaa.edu.cn}

\author{Deren Han}
\address{LMIB of the Ministry of Education, School of Mathematical Sciences, Beihang University, Beijing, 100191, China. }
\email{handr@buaa.edu.cn}

\author{Yansheng Su}
\address{School of Mathematical Sciences, Beihang University, Beijing, 100191, China. }
\email{suyansheng@buaa.edu.cn}

\author{Jiaxin Xie}
\address{LMIB of the Ministry of Education, School of Mathematical Sciences, Beihang University, Beijing, 100191, China. }
\email{xiejx@buaa.edu.cn}

\begin{abstract}
The stochastic heavy ball momentum (SHBM) method has gained considerable popularity as a scalable approach for solving large-scale optimization problems. However, one limitation of this method is its reliance on prior knowledge of certain problem parameters, such as singular values of a matrix. In this paper,  we propose an adaptive variant of the SHBM method for solving stochastic problems that are reformulated from linear systems  using user-defined distributions.  Our adaptive SHBM (ASHBM) method utilizes iterative information to update the parameters, addressing an open problem in the literature regarding the adaptive learning of momentum parameters. We prove that our method converges linearly in expectation, with a better convergence bound compared to  the basic  method. Notably, we demonstrate that the deterministic version of our ASHBM algorithm can be reformulated as a variant of the conjugate gradient (CG) method, inheriting many of its appealing properties, such as finite-time convergence. Consequently, the ASHBM method can be further generalized to develop a brand-new framework of the stochastic CG (SCG) method for solving linear systems. Our theoretical results are supported by numerical experiments.
\end{abstract}

\maketitle

\let\thefootnote\relax\footnotetext{Key words: 
	Linear systems, stochastic methods, heavy ball momentum, adaptive strategy, conjugate gradient}

\let\thefootnote\relax\footnotetext{Mathematics subject classification (2020): 65F10, 65F20, 90C25, 15A06, 68W20}

%----------------------------section introduction--------------------------

\section{Introduction}
The problem of solving linear systems is a fundamental problem that has a wide range of applications in numerous contexts, including signal processing \cite{Byr04}, optimal control \cite{Pat17}, machine learning \cite{Cha08}, and partial differential equations \cite{Ols14}.
With the advent of the age of big data,  it becomes increasingly desirable to design efficient algorithms for solving linear systems of unprecedented sizes. This has led to a surge of interest in randomized iterative methods,  such as the randomized Kaczmarz (RK) method \cite{Str09,Kac37} and the stochastic gradient descent (SGD) method \cite{robbins1951stochastic},  due to their potential for solving large-scale problems.
Indeed, the RK method can be viewed as a variant of the SGD method \cite{Han2022-xh,Needell2016-mh,needell2014stochasticMP,ma2017stochastic,zeng2022randomized}.

In recent years, the momentum acceleration technique has been recognized as an effective approach to improving the performance of optimization methods. One example is Polyak's heavy ball momentum method \cite{polyak1964some,ghadimi2015global}, and a fruitful line of research has been dedicated to extending this acceleration technique to enhance the performance of the SGD method \cite{loizou2020momentum,barre2020complexity,sebbouh2021almost,han2022pseudoinverse}. However, the resulting stochastic heavy ball momentum  (SHBM) method has a disadvantage that it requires  the prior information of some problem parameters, such as the singular values of the coefficient matrix \cite{loizou2020momentum,han2022pseudoinverse}. Therefore, an adaptive variant of the SHBM method that learns the parameters adaptively would be especially beneficial.  In this paper, we investigate the adaptive SHBM (ASHBM) method  for solving linear systems, which updates the parameters using readily available information.

We consider the consistent linear system
\begin{equation}
	\label{LS}
	Ax=b, \ A\in\mathbb{R}^{m\times n}, \ b\in\mathbb{R}^m,
\end{equation}
and  its reformulation into the following stochastic optimization problem
\begin{equation}
	\label{SOP1}
	\min\limits_{x \in \mathbb{R}^n} f(x):=\mathbb{E} \left[f_{S}(x)\right],
\end{equation}
where the expectation is taken  over random matrices $S$ drawn from a probability space $(\Omega, \mathcal{F}, P)$.
The function $f_{S}(x)$ is a stochastic convex quadratic function of a least-squares type, defined as
$$f_{S}(x):=\frac{1}{2} \left\|S^\top(Ax-b)\right\|^2_2, $$
where  $\top$ denotes the transpose of either a vector or a matrix. Naturally, the  stochastic problem \eqref{SOP1} will be  designed to be \emph{exact},  i.e. the set of minimizers of the problem \eqref{SOP1} is identical to the set of solutions of the linear system \eqref{LS}.

In this paper, we will consider  the  SHBM method to solve the optimization problem \eqref{SOP1}. At each iteration,  the SHBM method draws a random matrix $S_k \in \Omega$ and  employs the following update formula
$$x^{k+1}=x^k-\alpha_k\nabla f_{S_k}(x^k)+\beta_k(x^k-x^{k-1}),$$
where $\nabla$ is the gradient operator, $\alpha_k$ is the step-size, and $\beta_k$ is the momentum parameter. Note that $\nabla f_{S_k}(x^k)$ can be regarded as a randomized estimate of the full gradient $\nabla f(x)$. When $\beta_k=0$, the SHBM method reduces to the SGD method.
A typical case is the RK method \cite{Han2022-xh,Needell2016-mh,needell2014stochasticMP,ma2017stochastic,zeng2022randomized}. For any $ i\in\{1,2,\cdots,m\} $, let $ e_i $ denote the $ i $-th unit vector, $ A_{i, :} $ denote the $ i $-th row of $ A $, and $ b_i $ denote the $ i $-th entry of $ b $. Given any positive \textit{weights} $ \{w_i\}_{i=1}^m $, considering $ \Omega=\left\{\frac{e_i}{\sqrt{w_i}}\right\}_{i=1}^m$ with $ \frac{e_i}{\sqrt{w_i}} $ being sampled with probability $  \frac{w_i}{\sum_{i=1}^{m} w_i}$ at the $ k $-th iteration, then we obtain the following iteration scheme 
\[
x^{k+1}=x^k-\alpha_k\frac{A_{i_k, :}x^k-b_{i_k}}{w_{i_k} }A_{i_k, :}^\top,
\]	
which is a weighted variant of the RK method. See Remark \ref{remark-xie-0429-1}, Remark \ref{remark-3.5}, and Section \ref{NS-Section61} for further discussion on the methods that SHBM can recover.

\subsection{Our contributions}

In this paper, we present a generic algorithmic framework for solving the  stochastic optimization problem \eqref{SOP1}.  The main contributions of this work are as follows.

\begin{itemize}
	\item[1.] We develop a novel scheme for reformulating any consistent linear system into a stochastic problem and establish  conditions under which  the reformulation is exact. Particularly, instead of relying on a fixed probability space $(\Omega, \mathcal{F}, P)$, we utilize a specific class of probability spaces $\{(\Omega_k, \mathcal{F}_k, P_k)\}_{k\geq0}$ to generate the randomized matrix $S_k$ at each iteration. By manipulating the probability spaces, our  algorithmic framework can recover a wide range of popular algorithms, including  the RK method and its variants. Furthermore, it also enables us to design more versatile hybrid algorithms with improved performance, accelerated convergence, and better scalability.
	
	\item[2.] The SHBM method  has attracted much attention in recent years  due to its ability to improve the convergence of the SGD method. However, the preferable parameters $\alpha_k$ and $\beta_k$ for the SHBM method may rely on certain problem parameters that are generally inaccessible. For instance, the choices of $\alpha_k$ and $\beta_k$  for the SHBM method for solving the linear system $Ax=b$ require knowledge of the largest singular value as well as the smallest nonzero singular value of matrix $A$ \cite{loizou2020momentum,polyak1964some,ghadimi2015global,bollapragada2022fast}. Hence, it is an open problem whether one can  design a theoretically supported adaptive SHBM method \cite{barre2020complexity,bollapragada2022fast}.  In this paper, we give a positive answer to the above problem for a class of stochastic problems \eqref{SOP1}, developing the adaptive  SHBM (ASHBM) method.
	We provide an intuitive geometric interpretation that our method can be regarded as an orthogonal projection method. Along with a requirement for the sampling matrices, this interpretation exhibits how we adaptively learn the parameters $\alpha_k$ and $\beta_k$ in a simple and effective manner.
	
	\item[3.] Besides, although the SHBM method may be effective in practice, existing results show that the convergence factor of the expected error of the SHBM method for solving linear systems is inferior to that of the SGD method \cite{loizou2020momentum,han2022pseudoinverse}.  Therefore, it is an open problem whether one can prove a better factor for SHBM compared to SGD \cite[Section 4.1]{loizou2020momentum}. We also give a positive answer to this problem with our adaptive parameters $\alpha_k$ and $\beta_k$. Indeed, we  demonstrate that our approach not only simplifies the analysis but also endows the proposed method with an improved convergence factor.
	
	\item[4.] The deterministic version of our method is coincidentally equivalent to a variant of the conjugate gradient (CG) method,  which equips it with many of the attractive features of the CG method. This equivalence also inspires a novel perspective that ASHBM  can be regarded as a stochastic CG (SCG) method. To the best of our knowledge, this is the first time that such SCG procedure has been investigated. The efficiency and rapid convergence of the CG method in solving large-scale sparse systems may further explain the effectiveness of our method. The conclusions  are validated by the results of our numerical experiments.
\end{itemize}

\subsection{Related work}
\label{REL}

\subsubsection{Kaczmarz method}
The Kaczmarz method \cite{Kac37}, also known as the algebraic reconstruction technique (ART) \cite{herman1993algebraic,gordon1970algebraic}, is a classical iterative algorithm used to solve the large-scale linear system of equations \eqref{LS}. The method alternates between choosing a row of the system and updating the solution based on the projection onto the hyperplane defined by that row.
There is empirical evidence that selecting of the rows in a random order rather than a deterministic order can often accelerate the convergence of the Kaczmarz method \cite{herman1993algebraic,natterer2001mathematics,feichtinger1992new}. In the seminal paper \cite{Str09}, Strohmer and Vershynin studied the randomized  Kaczmarz (RK) method and proved that if linear system \eqref{LS} is consistent, then RK converges linearly  in expectation. Subsequently, there is a large amount of work on the development of the Kaczmarz-type methods including accelerated randomized Kaczmarz methods \cite{liu2016accelerated,han2022pseudoinverse,loizou2020momentum}, block Kaczmarz methods \cite{Nec19,needell2013two,needell2014paved,moorman2021randomized,Gow15}, greedy randomized Kaczmarz methods \cite{Bai18Gre,Gow19}, randomized sparse Kaczmarz methods \cite{schopfer2019linear,chen2021regularized}, etc. The connection between our method and the RK method is discussed in Remark \ref{remark-xie-0429-1}.

Moreover, we note that the RK method has also been adapted to handle inconsistent linear systems.  Needell \cite{needell2010randomized,needell2014stochasticMP} showed that RK applied to \emph{inconsistent} linear systems converges only to within a radius (\emph{convergence horizon}) of the least-squares solution. In a fruitful line of research, the extensions of the Kaczmarz method have been proposed for solving  inconsistent systems, which include randomized extended Kaczmarz (REK) method \cite{Zou12,Du19}, its block or deterministic variants \cite{wu2021semiconvergence,DS2021,Du20Ran,wu2022two,wu2022extended,popa1998extensions,popa1999characterization,bai2019partially}, greedy randomized augmented Kaczmarz (GRAK) method \cite{bai2021greedy}, RK with adaptive step-sizes (RKAS) \cite{zeng2022randomized}, randomized extended Gauss-Seidel (REGS) method \cite{Du19,ma2015convergence},  etc.

\subsubsection{ Polyak step-size}

Recall that the SGD method for solving the finite-sum problem $\min\frac{1}{m}\sum_{i=1}^mf_i(x)$ utilizes the update $x^{k+1}=x^k-\alpha_k\nabla f_{i_k}(x^k)$, where $\alpha_k$ is the step-size and $i_k$ is selected randomly. In recent years, there has been a surge of interest in the development and analysis of SGD. Despite these efforts, identifying adaptive step-sizes that do not require hyperparameter tuning remains a challenging problem. In this context, the stochastic Polyak step-size
\begin{equation}\label{polyak-stepsize}
	\alpha_k=\frac{f_{i_k}(x^k)-\hat{f}_{i_k}}{c\cdot\left\|\nabla f_{i_k}(x^k)\right\|^2_2}
\end{equation}
was proposed in \cite{loizou2021stochastic}, where $c>0$ is a fixed constant and $\hat{f}_i=\inf f_i$.
For convex lower bounded functions $f_i$, if there  exists $\hat{x}\in\mathbb{R}^n$ such that $f_i(\hat{x})=\hat{f}_i$ for all $i=1,\ldots,m$, i.e. the \emph{interpolation} condition holds, it has been proven that the iterates of this method converge. Although this assumption seems restrictive, it can be satisfied under certain circumstances, e.g. the stochastic optimization problem \eqref{SOP1}. Actually, the stochastic Polyak step-size will be employed in the basic method studied in this paper, see Remark \ref{remark-xie-0429}.

In a recent work \cite{barre2020complexity}, the authors wondered whether Polyak step-sizes could be combined with adaptive momentum to achieve accelerated first-order methods. For convex quadratic minimization, Goujaud et al. \cite{goujaud2022quadratic} introduced an adaptive variant of the heavy ball momentum (HBM) method, providing a positive answer to this question. They showed that their method is equivalent to a variant of the classical conjugate gradient (CG) method. In this paper, we affirmatively answer the problem for a class of stochastic problems \eqref{SOP1}. Furthermore, we introduce  a novel framework of the stochastic CG (SCG) method for solving linear systems.

\subsubsection{Heavy ball momentum method}

The HBM method introduced in $1964$ by Polyak  \cite{polyak1964some} is a modification of the classical gradient descent (GD) method. Originally, for twice continuously differentiable, strongly convex, and Lipschitz continuous gradient functions, a local accelerated linear convergence rate of the HBM method was established with appropriate parameters \cite{polyak1964some}. Only recently, the authors of \cite{ghadimi2015global} proved a global convergence of the HBM method for smooth and convex functions. Inspired by the success of the HBM method, several recent studies have extended this acceleration technique to enhance the SGD method, resulting in the development of the stochastic HBM (SHBM) method \cite{loizou2020momentum,barre2020complexity,sebbouh2021almost,han2022pseudoinverse,P2017Stochastic,
	loizou2021revisiting,morshed2020stochastic}.

Our result is closely related to the work of Loizou and Richt{\'a}rik \cite{loizou2020momentum}, where the SHBM method  for solving specialized convex quadratic problems was studied. They integrated several momentum variants of algorithms, such as the randomized Kaczmarz method and the randomized coordinate descent method,  into one framework.
They proved \cite[Theorem $1$]{loizou2020momentum} the linear convergence of the expected error  of SHBM. However, the convergence factor is based on a specific time-invariant momentum parameter that is related to the singular values of the matrix $A$. Moreover, the convergence factor is weaker than that of the SGD method. These limitations hinder the potential of the SHBM method and Loizou and Richt{\'a}rik \cite[Section 4.1]{loizou2020momentum} proposed an open problem: is it possible to prove a convergence factor for SHBM that is  better than that of the SGD  method? Bollapragada et al. \cite{bollapragada2022fast} recently investigated the accelerated convergence of the SHBM method for quadratic problems using standard momentum step-sizes. They demonstrated that with a sufficiently large batch size, the SHBM method maintains the fast linear rate of the HBM method. Besides, they also raised the question of how to adaptively learn the parameters $\alpha_k$ and $\beta_k$ to achieve state-of-the-art performance. This paper positively answers  the aforementioned problems.

Finally, we note that in \cite{saab2022adaptive}, the authors proposed an adaptive (deterministic) HBM (AHBM) method that employs the absolute differences between current and previous model parameters, as well as their gradients. The authors showed that for objective functions that are both smooth and strongly convex, AHBM guarantees a global linear convergence rate. Furthermore, they also extended their approach to the SHBM method and established its almost sure convergence, but without  offering convergence bound. Nevertheless, the approach proposed in this paper, which can be viewed as an orthogonal projection method, is distinct from that of  \cite{saab2022adaptive}. In addition, our method converges linearly in expectation.

\subsection{Organization}

The remainder of the paper is organized as follows. After introducing some notations and preliminaries in Section $2$, we present and analyze the basic method in Section $3$. In Section 4, we propose the ASHBM method and show its improved linear convergence rate. The relationship between the ASHBM method and the CG method is established in Section 5.  In Section 6, we perform some numerical experiments to show the effectiveness of the proposed method. Finally, we conclude the paper in Section 7.

\section{Notation and Preliminaries}

\subsection{Notations}
For any matrix $A \in \mathbb{R}^{m \times n}$, we use $A_{i,:}$, $A^\top$, $A^\dag$, $\|A\|_F$, $\text{Range}(A)$,  and $\text{Null}(A)$ to denote the $i$-th row, the transpose, the Moore-Penrose pseudoinverse, the Frobenius norm, the column space, and the null space of $A$, respectively. We use $\sigma_{\max}(A)$ and $\sigma_{\min}(A)$ to denote the largest and the smallest nonzero singular value of $A$, respectively, and we use $\lambda_{\max}(A^\top A)$ to denote  the largest eigenvalues of $A^\top A$.
Given $\mathcal{J} \subseteq [m]:=\{1,\ldots,m\}$, the cardinality of the set $\mathcal{J} $ is denoted by $|\mathcal{J} |$ and the complementary set of $\mathcal{J} $ is denoted by $\mathcal{J} ^c$, i.e. $\mathcal{J} ^c=[m]\setminus \mathcal{J}$.
We use $A_{\mathcal{J} ,:}$ and $A_{:,\mathcal{J} }$ to denote the row and column submatrix indexed by $\mathcal{J} $, respectively.
We use $ \operatorname{diag}(d_i,i\in \mathcal{J}) $ to represent the diagonal matrix consists of elements from the set $ \{ d_i \mid i \in \mathcal{J} \} $.
For any vector $b \in \mathbb{R}^m$, we use $b_i$ and $\|b\|_2$ to denote the $i$-th  entry and the Euclidean norm of $b$, respectively. The identity matrix is denoted by $I$. We use $\mathbb{N}$ to denote the set of natural numbers. For any random variables $\xi_1$ and $\xi_2$, we use $\mathbb{E}[\xi_1]$ and $\mathbb{E}[\xi_1\lvert \xi_2]$ to denote the expectation of $\xi_1$ and the conditional expectation of $\xi_1$ given $\xi_2$. The indicator function $\mathbb{I}_{\mathcal{C}}(\cdot)$ of the set $\mathcal{C}$ is defined as
\begin{equation}
	\nonumber
	\mathbb{I}_{\mathcal{C}} (x) = 
	\left\{\begin{array}{ll}
		1, \;\; \text{if} \; x \in \mathcal{C};
		\\
		0, \;\; \text{otherwise}.
	\end{array}
	\right.
\end{equation}

\subsection{Exactness of the reformulation}

In this section,  we investigate the condition such that the set of minimizers of the problem \eqref{SOP1} is identical to the set of solutions of the linear system \eqref{LS}. In order to proceed, we shall propose a basic assumption on the probability space $(\Omega, \mathcal{F}, P)$ used in this paper.

\begin{assumption}\label{assump1}
	Let $S$ be a random variable on the probability space $(\Omega, \mathcal{F}, P)$, we assume that the matrix $\mathbb{E}\left[SS^\top\right]$ has finite entries.
\end{assumption}

Note that if the assumption holds, then $\mathbb{E}\left[SS^\top\right]$ is symmetric and positive semidefinite. We shall enforce this assumption throughout the paper and therefore will not refer to it henceforth.
For the equivalence of \eqref{SOP1} and \eqref{LS}, we have the following result.

\begin{lemma}
	\label{iif-lemma}
	The set of minimizers of the stochastic reformulation \eqref{SOP1} is identical to the set of solutions of the linear system \eqref{LS} if and only if $\mbox{Null}\left(A^\top\mathbb{E}\left[SS^\top\right]A\right)=\mbox{Null}(A)$.
\end{lemma}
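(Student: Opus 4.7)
The plan is to rewrite the objective $f$ as a single positive semidefinite quadratic form and then reduce the equality of solution sets to an equality of null spaces.

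First I would set $W := \mathbb{E}[SS^\top]$, which under Assumption~\ref{assump1} is symmetric and positive semidefinite. Pulling the expectation inside the quadratic gives
\begin{equation*}
f(x) \;=\; \mathbb{E}\!\left[\tfrac12\|S^\top(Ax-b)\|_2^2\right] \;=\; \tfrac12 (Ax-b)^\top W (Ax-b),
\end{equation*}
so $f$ is a convex quadratic, and its gradient is $\nabla f(x) = A^\top W(Ax-b)$. By convexity the minimizers of \eqref{SOP1} are exactly the critical points, i.e.\ the solutions of the normal equation $A^\top W A\, x = A^\top W b$.

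Next I would pick any solution $x^\ast$ of $Ax=b$ (which exists by consistency of \eqref{LS}). Then $f(x^\ast) = 0$ and, since $f\geq 0$, the point $x^\ast$ is a minimizer, so $A^\top W A\, x^\ast = A^\top W b$. Subtracting, the minimizer set of \eqref{SOP1} is the affine subspace $x^\ast + \mathrm{Null}(A^\top W A)$, while the solution set of \eqref{LS} is $x^\ast + \mathrm{Null}(A)$. These two affine subspaces coincide if and only if $\mathrm{Null}(A^\top W A) = \mathrm{Null}(A)$, which is the stated condition.

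It remains to observe that the inclusion $\mathrm{Null}(A) \subseteq \mathrm{Null}(A^\top W A)$ is automatic, so only the reverse inclusion carries content. I would record this for clarity: $A^\top W A\, x = 0$ implies $x^\top A^\top W A x = \|W^{1/2}Ax\|_2^2 = 0$, hence $W^{1/2}Ax = 0$, i.e.\ $WAx = 0$; the condition in the lemma exactly says that this further forces $Ax=0$. The main (and really the only) subtlety is noticing that minimizers must coincide with critical points of a convex quadratic, and converting the ``$f=0$ implies $Ax=b$'' requirement into the null-space statement via the PSD square root; everything else is direct algebra.
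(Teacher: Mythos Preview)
Your proposal is correct and follows essentially the same route as the paper: both use convexity to identify the minimizer set with the critical-point set $x^\ast + \mathrm{Null}(A^\top W A)$ (where $W=\mathbb{E}[SS^\top]$) via a fixed solution $x^\ast$ of $Ax=b$, and compare it to $x^\ast + \mathrm{Null}(A)$. Your added remark that $\mathrm{Null}(A)\subseteq\mathrm{Null}(A^\top W A)$ is automatic, with the $W^{1/2}$ argument, is a nice clarification beyond what the paper records.
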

\begin{proof}
	Pick any $x^*$ such that $Ax^*=b$. Then we know that $x$ is a minimizer of \eqref{SOP1} if and only if
	$$
	\nabla f(x)=0\Leftrightarrow A^\top\mathbb{E}\left[SS^\top\right](Ax-b)=0\Leftrightarrow A^\top\mathbb{E}\left[SS^\top\right]A(x-x^*)=0,
	$$
	i.e. $x\in x^*+\text{Null}\left(A^\top\mathbb{E}\left[SS^\top\right]A\right)$. On the other hand, we know that $\tilde{x}$ is a solution of the linear system if and only if
	$
	A\tilde{x}=b \Leftrightarrow A(\tilde{x}-x^*)=0,
	$
	i.e. $\tilde{x}\in x^*+\mbox{Null}(A)$.  This implies \eqref{SOP1} and \eqref{LS} have  the same set of solutions if and only if $\mbox{Null}\left(A^\top\mathbb{E}\left[SS^\top\right]A\right)=\mbox{Null}(A)$.
\end{proof}

We now present a sufficient condition for the equivalence.
\begin{lemma}\label{lemma-suf}
	If $\mathbb{E}\left[SS^\top\right]$ is a positive definite matrix, then  \eqref{SOP1} and \eqref{LS} have  the same set of solutions.
\end{lemma}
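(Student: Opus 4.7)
The plan is to deduce this sufficient condition directly from Lemma \ref{iif-lemma}. Since Lemma \ref{iif-lemma} characterizes the equivalence of \eqref{SOP1} and \eqref{LS} through the equality $\text{Null}(A^\top\mathbb{E}[SS^\top]A)=\text{Null}(A)$, my task reduces to verifying this null-space identity under the hypothesis that $\mathbb{E}[SS^\top]$ is positive definite.

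First I would establish the easy inclusion $\text{Null}(A)\subseteq\text{Null}(A^\top\mathbb{E}[SS^\top]A)$, which is immediate: if $Ax=0$, then obviously $A^\top\mathbb{E}[SS^\top]Ax=0$. This direction does not require the positive-definiteness assumption at all.

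For the reverse inclusion, I would take any $x\in\text{Null}(A^\top\mathbb{E}[SS^\top]A)$ and multiply the identity $A^\top\mathbb{E}[SS^\top]Ax=0$ on the left by $x^\top$ to obtain the scalar equation $(Ax)^\top\mathbb{E}[SS^\top](Ax)=0$. Here the key step is to invoke the standing assumption that $\mathbb{E}[SS^\top]$ is positive definite: a positive definite matrix $M$ satisfies $y^\top M y=0$ only when $y=0$, so we conclude $Ax=0$, i.e. $x\in\text{Null}(A)$. Combining the two inclusions gives the required equality of null spaces, and Lemma \ref{iif-lemma} then immediately yields the claim.

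I do not anticipate any real obstacle here; the lemma is essentially a direct corollary of Lemma \ref{iif-lemma} together with the defining quadratic-form property of positive definite matrices. The only thing worth being careful about is that $\mathbb{E}[SS^\top]$ is always symmetric and positive semidefinite under Assumption \ref{assump1}, so the extra hypothesis ``positive definite'' is precisely what upgrades the implication $(Ax)^\top\mathbb{E}[SS^\top](Ax)=0\Rightarrow Ax=0$ from a tautology restricted to $\text{Range}(\mathbb{E}[SS^\top])^\perp$ to a genuine conclusion on all of $\mathbb{R}^m$.
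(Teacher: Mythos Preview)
Your proposal is correct and follows essentially the same approach as the paper: you reduce to Lemma \ref{iif-lemma} by establishing $\text{Null}(A^\top\mathbb{E}[SS^\top]A)=\text{Null}(A)$ under positive definiteness. The paper's proof is in fact a one-liner that simply asserts this null-space identity and invokes Lemma \ref{iif-lemma}; your version just fills in the quadratic-form argument for the nontrivial inclusion, which is exactly the intended justification.
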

\begin{proof}
	If $\mathbb{E}\left[SS^\top\right]$ is a positive definite matrix, then $\mbox{Null}\left(A^\top\mathbb{E}\left[SS^\top\right]A\right)=\mbox{Null}(A)$. By applying Lemma \ref{iif-lemma}, we can arrive at this lemma.
\end{proof}

Let $\left\{(\Omega_k, \mathcal{F}_k, P_k)\right\}_{k\geq0}$  be the class of probability spaces that are used in this paper, we make the following assumption on those probability spaces.

\begin{assumption}
	\label{Ass}
	Let $\left\{(\Omega_k, \mathcal{F}_k, P_k)\right\}_{k\geq0}$  be  probability spaces from which the sampling matrices are drawn. We assume that  for any $k\geq0$, $\mathop{\mathbb{E}}_{S\in\Omega_k} \left[S S^\top\right]$ is a positive definite matrix.
\end{assumption}

It follows from Lemma \ref{lemma-suf} that Assumption \ref{Ass} is a sufficient condition such that the stochastic reformulation is exact for any $k\geq0$.

\subsection{Some useful lemmas}

In this subsection, we will present some useful lemmas.

\begin{lemma}
	\label{lemma-non}
	Assume that the linear system $Ax=b$ is consistent. Then for any matrix $S \in \mathbb{R}^{m\times q}$ and any vector $\tilde{x} \in \mathbb{R}^{n}$, it holds that $A^\top SS^\top(A\tilde{x}-b) \neq 0$ if and only if $S^\top(A\tilde{x}-b) \neq 0$.
\end{lemma}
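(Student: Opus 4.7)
The statement is an equivalence, so I would split it into two directions. The easy direction is the ``only if'': if $S^\top(A\tilde{x}-b)=0$, then plainly $A^\top SS^\top(A\tilde{x}-b)=0$, and the contrapositive gives us one implication immediately. Nothing here uses consistency of the system.

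The substantive direction is: $S^\top(A\tilde{x}-b)\neq 0$ implies $A^\top SS^\top(A\tilde{x}-b)\neq 0$. My plan is to exploit the consistency of $Ax=b$ by picking some $x^\ast$ with $Ax^\ast=b$; this lets me write $A\tilde{x}-b = A(\tilde{x}-x^\ast)$, in particular placing the residual vector $v := A\tilde{x}-b$ inside $\mathrm{Range}(A)$. The point of doing this is that $v$ can be ``pulled back'' through $A$, which is exactly what is needed to connect $A^\top SS^\top v$ with $S^\top v$.

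Concretely, I would compute the inner product
\[
\langle A^\top SS^\top v,\ \tilde{x}-x^\ast\rangle \;=\; \langle SS^\top v,\ A(\tilde{x}-x^\ast)\rangle \;=\; \langle SS^\top v,\ v\rangle \;=\; \|S^\top v\|_2^2.
\]
If $S^\top v\neq 0$, the right-hand side is strictly positive, forcing $A^\top SS^\top v\neq 0$. This finishes the non-trivial direction, and combined with the easy one gives the equivalence.

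I do not expect any real obstacle: the proof is essentially a one-line inner product identity. The only subtlety worth flagging is that the argument genuinely needs consistency of $Ax=b$ — without it, $A\tilde{x}-b$ need not lie in $\mathrm{Range}(A)$, and the identity above would not go through. I would state the use of consistency explicitly (picking $x^\ast$ with $Ax^\ast=b$) so the reader sees where the hypothesis enters.
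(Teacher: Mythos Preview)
Your proposal is correct and follows essentially the same approach as the paper: both pick $x^\ast$ with $Ax^\ast=b$ and use the identity $\langle A^\top SS^\top(A\tilde{x}-b),\,\tilde{x}-x^\ast\rangle=\|S^\top(A\tilde{x}-b)\|_2^2$ to link the two conditions. The paper packages both directions into a single equivalence chain via positive semidefiniteness of $A^\top SS^\top A$, whereas you split them explicitly, but the mathematical content is the same.
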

\begin{proof}
	Suppose that $Ax^*=b$, then we know that $S^\top(A\tilde{x}-b) =0$ if and only if
	$$
	(A\tilde{x}-b)^\top S S^\top(A\tilde{x}-b)=(\tilde{x}-x^*)^\top A^\top S S^\top A(\tilde{x}-x^*)=0,
	$$
	which is equivalent to $A^\top S S^\top A(\tilde{x}-x^*)=A^\top SS^\top(A\tilde{x}-b)=0$. This completes the proof of this lemma.
\end{proof}

\begin{lemma}
	\label{xie-empty}
	Let matrix $S$ be a random variable such that $
	\mathbb{E}\left[SS^\top\right]=D
	$ is positive definite.
	Then $S^\top(Ax-b)=0$ for all $S$ holds if and only if $Ax=b$.
\end{lemma}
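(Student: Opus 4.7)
The plan is to prove both directions directly, with the nontrivial direction relying on positive definiteness of $D = \mathbb{E}[SS^\top]$ in essentially the same way Lemma \ref{lemma-suf} does.

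The backward direction is immediate: if $Ax = b$, then $Ax - b = 0$, so $S^\top(Ax - b) = 0$ for every realization of $S$. No probabilistic input is needed here.

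For the forward direction, the strategy is to convert the pointwise condition on $S$ into a scalar identity and then invoke the positive definiteness of $D$. Set $v := Ax - b \in \mathbb{R}^m$, treated as a fixed (nonrandom) vector. The hypothesis says $S^\top v = 0$ for all $S$ in the support, hence $v^\top S S^\top v = \|S^\top v\|_2^2 = 0$ almost surely. Taking expectations and using linearity gives
\begin{equation*}
0 = \mathbb{E}\bigl[v^\top S S^\top v\bigr] = v^\top \mathbb{E}\bigl[SS^\top\bigr] v = v^\top D v.
\end{equation*}
Since $D$ is positive definite by hypothesis, $v^\top D v = 0$ forces $v = 0$, i.e. $Ax = b$.

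There is essentially no obstacle here; the only subtlety worth flagging is the interpretation of the quantifier ``for all $S$''. I would read it as ``for every $S$ in the support of the underlying probability space'' (equivalently, almost surely), which is exactly what makes the chain $\|S^\top v\|_2^2 = 0$ a.s. $\Rightarrow \mathbb{E}\|S^\top v\|_2^2 = 0$ valid. One could alternatively state and use Lemma \ref{lemma-non} to rephrase $S^\top(Ax-b) = 0$ in terms of $A^\top S S^\top (Ax - b)$, but it is not necessary: the direct computation above is the shortest route and mirrors the argument already used in Lemma \ref{iif-lemma} and Lemma \ref{lemma-suf}.
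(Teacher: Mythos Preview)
Your proof is correct and follows essentially the same approach as the paper: both directions are handled identically, with the forward direction obtained by taking the expectation of $\|S^\top(Ax-b)\|_2^2$ to get $(Ax-b)^\top D(Ax-b)=0$ and then invoking positive definiteness of $D$.
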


\begin{proof}
	If $S^\top(Ax-b)=0$ for all $S$, then
	$
	0=\mathbb{E}\left[\|S^\top(Ax-b)\|^2_2\right]=(Ax-b)^\top D(Ax-b).
	$
	Since  $D$ is positive definite, we know that $Ax-b=0$. On the other hand, if $Ax=b$, then it holds that $S^\top(Ax-b)=0$ for all $S$.
\end{proof}

\begin{lemma}
	\label{positive}
	Let $S\in\mathbb{R}^{m\times q}$ be a real-valued random variable defined on a probability space $(\Omega,\mathcal{F},P)$. Suppose that
	$
	D=\mathbb{E}\left[SS^\top\right]
	$
	is a positive definite matrix.  Then
	$$
	\mathbb{E}\left[\frac{SS^\top}{\|S\|^2_2}\right]
	$$
	is well-defined and positive definite, here we define $\frac{0}{0}=0$.
\end{lemma}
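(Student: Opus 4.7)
The plan is to separate the statement into three claims: the expectation is well-defined (i.e.\ finite), the resulting matrix is positive semidefinite, and finally it is strictly positive definite. The first two are straightforward; the third is the real content and will use the hypothesis that $D=\mathbb{E}[SS^\top]$ is positive definite.

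First I would dispose of ``well-defined.'' Writing $M(S) := SS^\top/\|S\|_2^2$ with the convention $M(0)=0$, a Cauchy--Schwarz bound on each entry of $SS^\top$ gives
\begin{equation*}
|(SS^\top)_{ij}| \;=\; \Bigl|\sum_k S_{ik}S_{jk}\Bigr| \;\le\; \Bigl(\sum_k S_{ik}^2\Bigr)^{1/2}\Bigl(\sum_k S_{jk}^2\Bigr)^{1/2} \;\le\; \|S\|_2^2,
\end{equation*}
so every entry of $M(S)$ is bounded by $1$ in absolute value. Hence $\mathbb{E}[M(S)]$ exists and is finite entrywise. Positive semidefiniteness of $\mathbb{E}[M(S)]$ is then immediate: for every $v\in\mathbb{R}^m$, $v^\top M(S) v = \|S^\top v\|_2^2/\|S\|_2^2 \ge 0$ (or $=0$ when $S=0$), so taking expectations preserves the inequality.

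For the main step, strict positive definiteness, I would argue by contradiction. Suppose there exists $v\in\mathbb{R}^m$ with $v\ne 0$ and $v^\top \mathbb{E}[M(S)]v = 0$. Since the integrand $\|S^\top v\|_2^2/\|S\|_2^2$ is nonnegative, this forces $S^\top v = 0$ almost surely on the event $\{S\ne 0\}$; on $\{S=0\}$ we also trivially have $S^\top v=0$. Thus $S^\top v = 0$ almost surely, which in turn yields $\mathbb{E}[\|S^\top v\|_2^2] = v^\top D v = 0$. But $D$ is positive definite by assumption, so this forces $v=0$, contradicting $v\ne 0$.

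The main (mild) obstacle is just making sure the $0/0=0$ convention is handled cleanly in the ``almost sure'' step, but the event $\{S=0\}$ contributes zero to both $\mathbb{E}[M(S)]$ and $\mathbb{E}[SS^\top]$, so it poses no real difficulty. The whole argument is essentially a boundedness estimate plus a ``zero expectation of a nonnegative function implies almost-sure vanishing'' reduction to the hypothesis on $D$.
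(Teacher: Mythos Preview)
Your proof is correct, but it proceeds differently from the paper's. The paper argues constructively: for a fixed unit vector $x$, it uses positive definiteness of $D$ to get $\mathbb{E}[x^\top SS^\top x]\ge\varepsilon_0>0$, then truncates on $\{\|S\|_2^2\le c\}$ for a suitably large $c$ so that the truncated expectation still exceeds $\varepsilon_0/2$, and finally bounds $x^\top\mathbb{E}[SS^\top/\|S\|_2^2]x$ from below by $\frac{1}{c}$ times that truncated expectation, yielding the explicit positive lower bound $\varepsilon_0/(2c)$. Your route---if the quadratic form vanishes at some $v\neq 0$, then the nonnegative integrand $\|S^\top v\|_2^2/\|S\|_2^2$ must vanish almost surely, forcing $v^\top D v=0$---is shorter and avoids the truncation step entirely; the trade-off is that you obtain no quantitative lower bound on the smallest eigenvalue, whereas the paper's argument at least produces one (albeit $x$-dependent). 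For the well-definedness part both arguments are essentially the same boundedness estimate; note only that your inequality $(\sum_k S_{ik}^2)^{1/2}(\sum_k S_{jk}^2)^{1/2}\le\|S\|_2^2$ relies on the fact that each diagonal entry $(SS^\top)_{ii}=\|S^\top e_i\|_2^2$ is bounded by the spectral norm $\|S\|_2^2$, which is true but worth stating.
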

\begin{proof}
	Since $D$ is positive definite, then for any $x\in\mathbb{R}^m$ with $\|x\|_2=1$, there exists a $\varepsilon_0>0$ such that
	$$
	x^\top Dx=\mathbb{E}\left[x^{\top}SS^{\top}x \right] \geq \varepsilon_0>0.
	$$
	In addition, due to $x^\top Dx$ is bounded and $x^\top SS^\top x$ is nonnegative, there exists a $c:=c_{\varepsilon_0} > 0$ such that
	\begin{equation}
		\label{psd0-}
		\begin{aligned}
			\mathbb{E}\left[x^{\top}SS^{\top}x \cdot \mathbb{I}_{\{S \mid \Vert S \Vert_2^2 \leq c\}}(S) \right] > \frac{\varepsilon_0}{2}.
		\end{aligned}
	\end{equation}
	Note that
	$$
	x^\top \int_{\Omega} \frac{SS^\top}{\|S\|^2_2} dP \cdot x= \int_{\Omega} \frac{ \Vert S^\top x \Vert_2^2}{\|S\|^2_2} dP \leq \int_{\Omega} \frac{ \Vert S\Vert_2^2 \Vert x\Vert_2^2}{\|S\|^2_2} dP=\int_{\Omega} \Vert x\Vert_2^2 dP=1,
	$$
	which implies $\int_{\Omega} \frac{SS^\top}{\|S\|^2_2} dP < \infty$, hence $\mathbb{E}\left[\frac{SS^\top}{\|S\|^2_2}\right]$ is well-defined. 
	Furthermore,
	$$
	x^\top\mathbb{E}\left[\frac{SS^\top}{\|S\|^2_2}\right]x \geq \mathbb{E}\left[\frac{x^\top SS^\top x}{\|S\|^2_2} \cdot \mathbb{I}_{\{S| \Vert S \Vert_2^2 \leq c\}}(S) \right] \geq\frac{1}{c} \mathbb{E}\left[x^{\top}SS^{\top}x \cdot \mathbb{I}_{\{S| \Vert S \Vert_2^2 \leq c\}}(S) \right] >\frac{\varepsilon_0}{2c} >0,
	$$
	where the third inequality follows from \eqref{psd0-}. Thus, we can get that $\mathbb{E}\left[\frac{SS^\top}{\|S\|^2_2}\right]$ is positive definite. This completes the proof of this lemma.
\end{proof}

\section{Basic method}
\label{basic-m}

In this section, we  focus on utilizing the stochastic gradient descent (SGD) method to solve the linear system \eqref{LS}. Instead of using a fixed probability space $(\Omega, \mathcal{F}, P)$, we will use a class of probability spaces $\{(\Omega_k, \mathcal{F}_k, P_k)\}_{k\geq0}$ for choosing sampling matrices. Specifically, at the $k$-th iteration, we consider the following stochastic reformulation of the linear system \eqref{LS}
$$\mathop{\min}\limits_{x \in \mathbb{R}^n} f^k(x):=\mathop{\mathbb{E}}\limits_{S\in\Omega_k}\left[ f_S(x)\right],$$
where $f_S(x):=\frac{1}{2} \left\|S^\top(Ax-b)\right\|^2_2$ with $S$ being a random variable in $(\Omega_k, \mathcal{F}_k, P_k)$. We can ensure the exactness of this reformulation due to Assumption \ref{Ass}. Starting from $x^k$, we employ only one step of the SGD method to solve the reformulated problem
$$
x^{k+1}=x^k-\alpha_k\nabla f_{S_k}(x^k)=x^k-\alpha_k A^\top S_k S_k^\top(Ax^k-b),
$$
where $S_k$ is drawn from the sample space $\Omega_k$ and $\alpha_k$ is the step-size. Particularly, we consider the following adaptive step-sizes
\begin{equation}
	\label{alp}
	\alpha_k=
	\left\{\begin{array}{ll}
		(2-\zeta_k)L_{\text{adap}}^{(k)}, & \text{if} \; S_k^\top (Ax^k-b)\neq 0;
		\\
		0, & \text{otherwise},
	\end{array}
	\right.
\end{equation}
where
\begin{equation}
	\label{Lk}
	L_{\text{adap}}^{(k)}=\frac{2 f_{S_k}(x^k)}{\|\nabla f_{S_k}(x^k)\|^2_2}=
	\frac{\|S_k^\top (Ax^k-b)\|_2^2}{\|A^\top S_k S_k^\top (Ax^k-b)\|_2^2}
\end{equation}
and $ \zeta_k \in (0,2) $ are relaxation parameters for adjusting the step-size.
From Lemma \ref{lemma-non}, we know that $L_{\text{adap}}^{(k)}$ is well-defined as $S_k^\top (Ax^k-b)\neq 0$ implies that $A^\top S_k S_k^\top (Ax^k$ $ -b) \neq 0$.
We emphasize that for the case $S_k^\top (Ax^k-b)= 0$, we set $\alpha_k$ to be zero since any constant value  can be chosen. This is because we now have $A^\top S_kS_k^\top(Ax^k-b)=0$, and thus, regardless of the choice of $\alpha_k$, it holds that $x^{k+1}=x^k$.
Now we are ready to state the basic method, which is formally described in Algorithm \ref{BSM}.

\begin{algorithm}[htpb]
	\caption{Basic  method}
	\label{BSM}
	\begin{algorithmic}
		\STATE{\textbf{Input:} $A \in \mathbb{R}^{m \times n}$, $b \in \mathbb{R}^m$, probability spaces $\{(\Omega_k, \mathcal{F}_k, P_k)\}_{k\geq0}$, $\{\zeta_k\}_{k \geq 0}$ with $\zeta_k \in (0, 2)$, $k=0$, and the initial point $x^0 \in \text{Range}(A^\top)$.}
		\begin{enumerate}
			\item[1:] Randomly select a sampling matrix $S_k\in \Omega_k$.
			\item[2:] If $S_k^\top (Ax^k-b)\neq 0$,
			
			\qquad Compute the parameter $L_{\text{adap}}^{(k)}$ in \eqref{Lk} and set $\alpha_k=(2-\zeta_k)L_{\text{adap}}^{(k)}$;
			
			\qquad Update $x^{k+1}=x^k-\alpha_k A^\top S_k S_k^\top (Ax^k-b)$.
			
			Otherwise,
			
			\qquad Set $x^{k+1}=x^k$.

			\item[3:] If the stopping rule is satisfied, stop and go to output. Otherwise, set $k=k+1$ and return to Step $1$.
		\end{enumerate}
		\STATE{\textbf{Output:} The approximate solution $x^k$.}
	\end{algorithmic}
\end{algorithm}

\begin{remark}\label{remark-xie-0429}
	It can be observed from \eqref{polyak-stepsize} that $L_{\text{adap}}^{(k)}$ in \eqref{Lk} is equivalent to the stochastic Polyak step-size. This implies that the step-size employed in Algorithm \ref{BSM} coincides with the stochastic Polyak step-size.
\end{remark}

\begin{remark}
	\label{remark-xie-0429-1}
	We consider the randomized average block Kaczmarz (RABK) method proposed by Necoara \cite{Nec19}.
	The RABK method leads to the following iteration:
	\begin{equation}\label{BKM}
		x^{k+1}=x^k-\alpha_k\bigg(\sum\limits_{i\in \mathcal{J} _k}\omega^{(k)}_i\frac{A_{i,:}x^k-b_i}{\|A_{i,:}\|^2_2}A_{i,:}^\top\bigg),
	\end{equation}
	where the weights $\omega^{(k)}_i\in[0,1]$ such that $\sum\limits_{i\in \mathcal{J} _k}\omega^{(k)}_i=1$,  $\mathcal{J} _k\subseteq[m]$, and $\alpha_k>0$ is the step-size.
	If $|\mathcal{J} _k|=1$, $\alpha_k=1$, and the index $i$ is selected with probability $\frac{\|A_{i,:}\|^2_2}{\|A\|^2_F}$, then RABK reduces to the classical RK method \cite{Str09}.
	Let $I_{:,\mathcal{J}_k}$
	denote a column concatenation of the columns of the $m\times m$ identity matrix $I$ indexed by $\mathcal{J} _k$, and the diagonal matrix  $D_{\mathcal{J} _k,:}=\text{diag}(\sqrt{\omega^{(k)}_i}/\|A_{i,:}\|_2,i\in \mathcal{J} _k)$.
	Then the iteration scheme \eqref{BKM} can alternatively be written as
	$$x^{k+1}=x^k-\alpha_k A^\top S_k S_k^\top (Ax^k-b),$$
	where $S_k = I_{:,\mathcal{J} _k}D_{\mathcal{J} _k,:}$, which can be viewed as a sampling matrix selected from  a certain probability space $(\Omega_k, \mathcal{F}_k, P_k)$. This means that the RABK method \eqref{BKM} can be viewed as a special case of our basic method.
\end{remark}

Furthermore, the versatility of our framework and the general convergence theorem (Theorem \ref{CRS-ASGD}) enable one to customize the probability spaces $\{(\Omega_k, \mathcal{F}_k, P_k)\}_{k\geq0}$ to other specific problems. For example, random sparse matrices or sparse Rademacher matrices may be suitable for a particular set of problems.

\subsection{Convergence analysis}

To establish the convergence of Algorithm \ref{BSM}, let us first introduce some notations. We define
\begin{equation}
	\label{matrix-H}
	H_k:=
	\left\{\begin{array}{ll}
		\mathop{\mathbb{E}}\limits_{S \in \Omega_k}[SS^\top], \quad\;\;\, \text{if} \; \Omega_k \; \text{is} \; \text{bounded};
		\\
		\\
		\mathop{\mathbb{E}}\limits_{S \in \Omega_k}\left[\frac{SS^\top}{\|S\|_2^2}\right], \;\;\, \ \text{otherwise},
	\end{array}
	\right.
\end{equation}
and
\begin{equation}
	\label{lambda-max}
	\lambda_{\max}^{(k)}:=
	\left\{\begin{array}{ll}
		\sup\limits_{S\in \Omega_k} \lambda_{\max}\left(A^\top SS^\top A\right), \;\;\,\ \text{if} \; \Omega_k \; \text{is} \; \text{bounded};
		\\
		\\
		\sup\limits_{S\in \Omega_k} \lambda_{\max}\left(\frac{A^\top SS^\top A}{\|S\|_2^2}\right), \quad\;\ \text{otherwise}.
	\end{array}
	\right.
\end{equation}
It follows from Assumption \ref{Ass} and Lemma \ref{positive} that $H_k$ in \eqref{matrix-H} is well-defined and  is a positive definite matrix. We define the set
\begin{equation}
	\label{xie-Qk}
	\mathcal{Q}_k:=\{S\in \Omega_k \mid S^\top (Ax^k-b)\neq 0\},
\end{equation}
where Algorithm \ref{BSM} indeed executes one step such that $x^{k+1}\neq x^k$ when $S_k \in \mathcal{Q}_k$. Additionally, we can observe that $\{\mathcal{Q}_k, \mathcal{Q}_k^c\}$ forms a partition of $\Omega_k$.

\begin{theorem}
	\label{CRS-ASGD}
	Suppose that the  linear system {\eqref{LS}} is consistent and the probability spaces $\{(\Omega_k, \mathcal{F}_k, P_k)\}_{k\geq 0}$ satisfy Assumption \ref{Ass}. Let $\{x^k\}_{k \geq 0}$ be the iteration sequence generated by Algorithm \ref{BSM}. Then
	$$\mathop{\mathbb{E}} \left[\| x^{k+1}-A^\dagger b \|_2^2 \  \big| \   x^k\right] \leq \left(1-\zeta_k(2-\zeta_k)\frac{\sigma_{\min}^{2} (H_k^{\frac{1}{2}}A)}{\lambda_{\max}^{(k)}}\right) \left\| x^k-A^\dagger b \right\|_2^2,
	$$
	where $H_k$ and $\lambda_{\max}^{(k)}$ are given by \eqref{matrix-H} and \eqref{lambda-max}, respectively.
	
\end{theorem}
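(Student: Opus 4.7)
The plan is to compute the per-step squared-error recursion exactly, use Cauchy-Schwarz/Rayleigh-quotient bounds to replace the random denominator by $\lambda_{\max}^{(k)}$, and then close the loop by taking conditional expectation and invoking the definition of $H_k$ together with $\sigma_{\min}^2(H_k^{1/2}A)$.

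Let $x^*:=A^\dagger b$, so $Ax^*=b$. First I would establish the invariant $x^k-x^*\in\mathrm{Range}(A^\top)$ by induction: it holds at $k=0$ by the hypothesis $x^0\in\mathrm{Range}(A^\top)$ together with $x^*\in\mathrm{Range}(A^\top)$, and each update adds a vector $\alpha_k A^\top S_k S_k^\top(Ax^k-b)\in\mathrm{Range}(A^\top)$. This is needed later to legitimately use the smallest \emph{nonzero} singular value $\sigma_{\min}(H_k^{1/2}A)$.

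Next, set $v_k:=S_k^\top(Ax^k-b)$ and $g_k:=A^\top S_k v_k$. When $S_k\in\mathcal{Q}_k$, expanding the squared error and plugging in $\alpha_k=(2-\zeta_k)\|v_k\|_2^2/\|g_k\|_2^2$ gives, after the standard cancellation,
\[
\|x^{k+1}-x^*\|_2^2 \;=\; \|x^k-x^*\|_2^2 \;-\; \zeta_k(2-\zeta_k)\,\frac{\|v_k\|_2^4}{\|g_k\|_2^2},
\]
and when $S_k\in\mathcal{Q}_k^c$ both sides agree trivially since $v_k=0$ forces $x^{k+1}=x^k$. Now I would bound the denominator by a Rayleigh-quotient argument: $\|g_k\|_2^2=v_k^\top S_k^\top AA^\top S_k v_k\le \|v_k\|_2^2\,\lambda_{\max}(A^\top S_k S_k^\top A)$. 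In the bounded case this is $\le\|v_k\|_2^2\lambda_{\max}^{(k)}$, while in the unbounded case I would insert a factor $\|S_k\|_2^2$, using $\lambda_{\max}(A^\top S_k S_k^\top A)\le\|S_k\|_2^2\lambda_{\max}(A^\top S_k S_k^\top A/\|S_k\|_2^2)\le\|S_k\|_2^2\lambda_{\max}^{(k)}$. Either way,
\[
\frac{\|v_k\|_2^4}{\|g_k\|_2^2} \;\ge\; \frac{1}{\lambda_{\max}^{(k)}}\cdot\frac{\|v_k\|_2^2}{\theta_k},\qquad \theta_k:=\begin{cases}1,&\Omega_k\text{ bounded,}\\ \|S_k\|_2^2,&\text{otherwise.}\end{cases}
\]

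Finally, I would take conditional expectation with respect to the random draw $S_k$. Using $v_k=S_k^\top A(x^k-x^*)$ and the definition \eqref{matrix-H} of $H_k$,
\[
\mathop{\mathbb{E}}_{S_k\in\Omega_k}\!\left[\frac{\|v_k\|_2^2}{\theta_k}\right] \;=\; (x^k-x^*)^\top A^\top H_k A\,(x^k-x^*) \;=\; \bigl\|H_k^{1/2}A(x^k-x^*)\bigr\|_2^2.
\]
Since $H_k$ is positive definite by Lemma \ref{positive} together with Assumption \ref{Ass}, one has $\mathrm{Null}(H_k^{1/2}A)=\mathrm{Null}(A)$, and combining this with $x^k-x^*\in\mathrm{Range}(A^\top)=\mathrm{Null}(A)^\perp$ yields $\|H_k^{1/2}A(x^k-x^*)\|_2^2\ge\sigma_{\min}^2(H_k^{1/2}A)\,\|x^k-x^*\|_2^2$. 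Substituting these bounds back into the recursion gives the stated inequality.

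The main obstacle I foresee is the unified treatment of the bounded and unbounded cases: the normalization by $\|S_k\|_2^2$ must be threaded carefully through both the Rayleigh-quotient bound on $\|g_k\|_2^2$ and the expectation identity so that the same scalar $\lambda_{\max}^{(k)}$ and the same matrix $H_k$ emerge in each case; the scale-invariance of the step-size $\alpha_k\|g_k\|$ is the key observation that makes this alignment possible. A secondary subtlety is justifying the use of $\sigma_{\min}$ rather than a genuine lower bound on the spectrum, which is exactly what the $\mathrm{Range}(A^\top)$ invariant provides.
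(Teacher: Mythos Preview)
Your proposal is correct and follows essentially the same approach as the paper: expand the squared error using the explicit step-size to obtain the decrement $\zeta_k(2-\zeta_k)L_{\text{adap}}^{(k)}\|S_k^\top(Ax^k-b)\|_2^2$, lower-bound $L_{\text{adap}}^{(k)}$ via a Rayleigh-quotient argument (splitting into the bounded and unbounded cases for $\Omega_k$), take conditional expectation to produce $\|H_k^{1/2}A(x^k-A^\dagger b)\|_2^2$, and finish with the $\sigma_{\min}^2$ bound justified by $x^k-A^\dagger b\in\mathrm{Range}(A^\top)$. Your device of writing a single scalar $\theta_k$ to unify the two cases is a clean presentation, but it is the same argument as the paper's, which simply writes out the two cases separately.
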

\begin{proof}
	Let $\mathcal{Q}_k$ be defined as \eqref{xie-Qk} and if the sampling matrix $S_k \in \mathcal{Q}_k$, then we have
	\begin{equation}\label{descent-xie}
		\begin{aligned}
			\|x^{k+1}-A^{\dagger}b\|_2^2
			=&\|x^k-\alpha_k A^\top S_k S_k^\top (Ax^k-b)-A^{\dagger}b\|_2^2 \\
			=&\|x^k-A^{\dag}b\|_2^2-2\alpha_k \left\langle x^k-A^{\dagger}b, A^\top S_k S_k^\top (Ax^k-b) \right\rangle
			\\
			&+\alpha_k^2 \|A^\top S_k S_k^\top (Ax^k-b)\|_2^2 \\
			=&\|x^k-A^{\dag}b\|_2^2-2(2-\zeta_k)L_{\text{adap}}^{(k)} \|S_k^\top (Ax^k-b)\|_2^2
			\\
			&+(2-\zeta_k)^2L_{\text{adap}}^{(k)} \|S_k^\top (Ax^k-b)\|_2^2 \\
			=&\|x^k-A^{\dag}b\|_2^2-\zeta_k(2-\zeta_k) L_{\text{adap}}^{(k)} \|S_k^\top (Ax^k-b)\|_2^2,
		\end{aligned}
	\end{equation}
	where the third equality follows from the definition of $\alpha_k$ and $L_{\text{adap}}^{(k)}$ in \eqref{alp} and \eqref{Lk}, respectively. For convenience, we use $\mathbb{E}_{S_k \in \Omega_k}[\ \cdot \ ]$ to denote $\mathbb{E}_{S_k \in \Omega_k}[\ \cdot \mid x^k]$.
	Thus
	\begin{equation}
		\label{Exp}
		\begin{aligned}
			\mathop{\mathbb{E}}\limits_{S_k \in \Omega_k}\left[\|x^{k+1}-A^{\dag}b\|_2^2\right]
			=&\mathop{\mathbb{E}}\limits_{S_k \in \mathcal{Q}_k}\left[\|x^{k+1}-A^{\dag}b\|_2^2\right]+\mathop{\mathbb{E}}\limits_{S_k \in \mathcal{Q}_k^c}\left[\|x^{k+1}-A^{\dag}b\|_2^2\right] \\
			=&\mathop{\mathbb{E}}\limits_{S_k \in \mathcal{Q}_k}\left[\|x^k-A^{\dag}b\|_2^2-\zeta_k(2-\zeta_k) L_{\text{adap}}^{(k)} \|S_k^\top  (Ax^k-b)\|_2^2\right]
			\\
			&+\mathop{\mathbb{E}}\limits_{S_k \in \mathcal{Q}_k^c}[\|x^k-A^{\dag}b\|_2^2] \\
			=&\mathop{\mathbb{E}}\limits_{S_k \in \Omega_k}[\|x^k-A^{\dag}b\|_2^2]-\zeta_k(2-\zeta_k)\mathop{\mathbb{E}}\limits_{S_k \in \mathcal{Q}_k}\left[L_{\text{adap}}^{(k)} \|S_k^\top (Ax^k-b)\|_2^2 \right] \\
			=&\|x^k-A^{\dag}b\|_2^2-\zeta_k(2-\zeta_k)\mathop{\mathbb{E}}\limits_{S_k \in \mathcal{Q}_k}\left[L_{\text{adap}}^{(k)} \|S_k^\top (Ax^k-b)\|_2^2\right].
		\end{aligned}
	\end{equation}
	
	We consider the case where $\Omega_k$ is bounded. If $S_k \in \mathcal{Q}_k$, then we have
	$$L_{\text{adap}}^{(k)}=\frac{\|S_k^\top (Ax^k -b)\|_2^2}{\|A^\top S_k S_k^\top (Ax^k-b)\|_2^2}\geq \frac{1}{\lambda_{\max}(A^\top S_kS_k^\top A)}\geq \frac{1}{\lambda_{\max}^{(k)}}.$$
	Substituting it into \eqref{Exp},  we can get
	\begin{equation}
		\label{proof-xie1}
		\begin{aligned}
			\mathop{\mathbb{E}}\limits_{S_k \in \Omega_k}\left[\|x^{k+1}-A^{\dag}b\|_2^2\right]
			&\leq \|x^k-A^{\dag}b\|_2^2-\frac{\zeta_k(2-\zeta_k)}{\lambda_{\max}^{(k)}}\mathop{\mathbb{E}}\limits_{S_k \in \mathcal{Q}_k}\left[\|S_k^\top (Ax^k-b)\|_2^2\right]\\
			&= \|x^k-A^{\dag}b\|_2^2-\frac{\zeta_k(2-\zeta_k)}{\lambda_{\max}^{(k)}}\mathop{\mathbb{E}}\limits_{S_k \in \Omega_k}\left[\|S_k^\top (Ax^k-b)\|_2^2\right]\\
			&= \|x^k-A^{\dag}b\|_2^2-\frac{\zeta_k(2-\zeta_k)}{\lambda_{\max}^{(k)}} \left\|H_k^{\frac{1}{2}} A(x^k-A^{\dag}b)\right\|_2^2 \\
			&\leq \left(1-\zeta_k(2-\zeta_k)\frac{\sigma_{\min}^{2} (H_k^{\frac{1}{2}}A)}{\lambda_{\max}^{(k)}}\right) \|x^k-A^{\dag}b\|_2^2,
		\end{aligned}
	\end{equation}
	where the first equality follows from the fact that $\mathbb{E}_{S_k \in \mathcal{Q}^c_k}\left[\|S_k^\top (Ax^k-b)\|_2^2\right]=0$ as $S_k^\top (Ax^k-b)=0$ for $S_k \in \mathcal{Q}^c_k$, and the last inequality follows from $H_k=\mathbb{E}_{S \in \Omega_k}[SS^\top]$ is positive definite and $x^k-A^{\dag}b \in \text{Range}(A^\top)$.
	
	Next, we consider  the case where $\Omega_k$ is unbounded. If $S_k \in \mathcal{Q}_k$, we have
	$$L_{\text{adap}}^{(k)}=\frac{\|S_k^\top (Ax^k-b)\|_2^2}{\|A^\top S_k S_k^\top (Ax^k-b)\|_2^2}\geq \frac{1}{\lambda_{\max}\left(\frac{A^\top S_kS_k^\top A}{\|S_k\|_2^2}\right)} \frac{1}{\|S_k\|_2^2}\geq \frac{1}{\lambda_{\max}^{(k)} \|S_k\|_2^2}.$$
	Substituting it into \eqref{Exp} and using the similar arguments in \eqref{proof-xie1}, we can get
	$$
	\begin{aligned}
		\mathop{\mathbb{E}}\limits_{S_k \in \Omega_k}[\|x^{k+1}-A^{\dag}b\|_2^2]
		&\leq \|x^k-A^{\dag}b\|_2^2-\frac{\zeta_k(2-\zeta_k)}{\lambda_{\max}^{(k)}}\mathop{\mathbb{E}}\limits_{S_k \in \mathcal{Q}_k}\left[\frac{\|S_k^\top (Ax^k-b)\|_2^2}{\|S_k\|_2^2}\right] \\
		&= \|x^k-A^{\dag}b\|_2^2-\frac{\zeta_k(2-\zeta_k)}{\lambda_{\max}^{(k)}} \left\|H_k^{\frac{1}{2}} A(x^k-A^{\dag}b)\right\|_2^2 \\
		&\leq \left(1-\zeta_k(2-\zeta_k)\frac{\sigma_{\min}^{2} (H_k^{\frac{1}{2}}A)}{\lambda_{\max}^{(k)}}\right) \|x^k-A^{\dag}b\|_2^2.
	\end{aligned}
	$$
	This completes the proof of this theorem.
\end{proof}

Based on Theorem \ref{CRS-ASGD}, we have the following corollary. 
\begin{corollary}
	\label{full}
	Under the same conditions of Theorem \ref{CRS-ASGD} and set $\rho_k:=\zeta_k(2-\zeta_k)\frac{\sigma_{\min}^{2} (H_k^{\frac{1}{2}}A)}{\lambda_{\max}^{(k)}}$, the iteration sequence $\{x^k\}_{k \geq 0}$ generated by Algorithm \ref{BSM} satisfies
	\begin{equation}
		\label{full_expectation_basic}
		\begin{aligned}
			\mathop{\mathbb{E}} \left[\| x^k-A^\dagger b \|_2^2\right] \leq \prod \limits_{i=0}^{k-1} \left(1-\rho_i \right) \left\| x^0-A^\dagger b \right\|_2^2.
		\end{aligned}
	\end{equation}
\end{corollary}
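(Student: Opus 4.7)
The plan is to derive Corollary \ref{full} directly from Theorem \ref{CRS-ASGD} by taking full (unconditional) expectations and chaining the one-step bounds via induction on $k$.

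First, I would observe that Theorem \ref{CRS-ASGD} gives the conditional bound
$$
\mathbb{E}\left[\|x^{k+1}-A^\dagger b\|_2^2 \,\big|\, x^k\right] \leq (1-\rho_k)\,\|x^k - A^\dagger b\|_2^2,
$$
where $\rho_k = \zeta_k(2-\zeta_k)\sigma_{\min}^2(H_k^{1/2}A)/\lambda_{\max}^{(k)}$. Taking the unconditional expectation of both sides and applying the tower property of conditional expectation, namely $\mathbb{E}[\mathbb{E}[\,\cdot\,|x^k]] = \mathbb{E}[\,\cdot\,]$, yields
$$
\mathbb{E}\left[\|x^{k+1}-A^\dagger b\|_2^2\right] \leq (1-\rho_k)\,\mathbb{E}\left[\|x^k - A^\dagger b\|_2^2\right].
$$
Note that $\rho_k$ is deterministic (it depends only on $\Omega_k$, the matrix $A$, and the user-chosen relaxation parameter $\zeta_k$, none of which depend on the random iterate $x^k$), so it can be pulled out of the expectation without difficulty.

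Next, I would proceed by induction on $k$. The base case $k=0$ is trivial, as the right-hand side of \eqref{full_expectation_basic} reduces to $\|x^0 - A^\dagger b\|_2^2$ (the empty product equals one). For the inductive step, assume the bound holds at step $k$; then using the one-step recursion above followed by the inductive hypothesis gives
$$
\mathbb{E}\left[\|x^{k+1}-A^\dagger b\|_2^2\right] \leq (1-\rho_k) \prod_{i=0}^{k-1}(1-\rho_i)\,\|x^0 - A^\dagger b\|_2^2 = \prod_{i=0}^{k}(1-\rho_i)\,\|x^0 - A^\dagger b\|_2^2,
$$
which is the desired inequality at step $k+1$.

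There is no substantive obstacle here, since Theorem \ref{CRS-ASGD} already contains the essential geometric content; the corollary is a routine unrolling. The only mild care needed is to confirm that the contraction factor $\rho_k$ is non-random (so that the tower property produces a clean product of deterministic factors), which is immediate from the definitions of $H_k$ and $\lambda_{\max}^{(k)}$ in \eqref{matrix-H} and \eqref{lambda-max}.
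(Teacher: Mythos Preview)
Your proposal is correct and matches the paper's approach: the paper presents Corollary \ref{full} as an immediate consequence of Theorem \ref{CRS-ASGD} without writing out a proof, and your tower-property plus induction argument is exactly the routine unrolling that justifies it.
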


Next, we compare the upper bound in \eqref{full_expectation_basic} with those of the RK method and the RABK method. 

\begin{remark}
	\label{remark-3.5}
	When $ \Omega_k = \left\{ \frac{e_i}{\Vert A_{i, :} \Vert_2} \right\}_{i=1}^m $ and  $S=\frac{e_i}{\Vert A_{i, :} \Vert_2}$ is selected with probability $\frac{\Vert A_{i, :} \Vert_2^2}{\Vert A \Vert_F^2}$, Algorithm \ref{BSM} with $\zeta_k = 1$ recovers the RK method
	$$
	x^{k+1}=x^k-\frac{A_{i_k, :}x^k-b_{i_k}}{\Vert A_{i_k, :} \Vert_2^2}A_{i_k, :}^{\top}.
	$$
	In this case, we have $H_k=\frac{1}{\Vert A \Vert_F^2}I$ and $\lambda_{\max}^{(k)}=1$. Thus, \eqref{full_expectation_basic} becomes
	$$
	\mathop{\mathbb{E}} \left[\| x^k-A^\dagger b \|_2^2 \right] \leq \left(1-\frac{\sigma_{\min}^{2} (A)}{\Vert A \Vert_F^2 }\right)^k \left\| x^0-A^\dagger b \right\|_2^2,
	$$
	which coincides with the convergence result of RK \cite{Str09}. Furthermore, given that the RK method can actually be viewed as a SGD method with stochastic Polyak step-sizes when applied to linear systems \cite[Section B.3]{loizou2021stochastic}, we have also examined the correlation between the basic method and the SGD method with stochastic Polyak step-sizes. This investigation indicates that, in the context of solving linear systems, the SGD method with stochastic Polyak step-sizes is a special case of the basic method, where the sample space $ \Omega_k $ is finite and remains constant through iterations and the relaxation parameters $ \zeta_k = 1$ for $ k\geq 0 $. In such cases, the result in Corollary \ref{full} reaffirms the convergence results for the SGD method with stochastic Polyak step-sizes in \cite[Corollary 3.2]{loizou2021stochastic}.
\end{remark}

\begin{remark}
	Corollary \ref{full} can derive a slightly tighter convergence bound for the RABK method with adaptive step-sizes compared to that in \cite{Nec19}. Assume that the subset $\mathcal{J}_k \subseteq [m]$ in \eqref{BKM} is characterized by a certain probability space $\{\bar{\Omega},\bar{\mathcal{F}},\bar{P}\}$  and the weights satisfy $0 < \omega_{\min} \leq \omega^{(k)}_i \leq \omega_{\max} < 1$ for all $k $ and $ i$.
	Necoara \cite[Theorem 4.2]{Nec19} showed that for RABK with the step-size $\alpha_{k}=(2-\zeta) L_{\text{adap}}^{(k)}$, it holds that
	$$
	\mathop{\mathbb{E}} \left[\| x^k-A^\dagger b \|_2^2 \right] \leq \left(1-\zeta(2-\zeta)\frac{\omega_{\min} \sigma_{\min} (W)}{\omega_{\max} \lambda_{\max}^{\text{block}}}\right)^k \left\| x^0-A^\dagger b \right\|_2^2,
	$$
	where
	$$
	\lambda_{\max}^{\text{block}}=\mathop{\max}\limits_{\mathcal{J} \in \bar{\Omega}} \lambda_{\max} \left( A_{\mathcal{J}, :}^\top \operatorname{diag}\left(\frac{1}{\Vert A_{i, :} \Vert_2^2}, i \in \mathcal{J} \right) A_{\mathcal{J}, :} \right), \,
	W=A^\top \operatorname{diag}\left(\frac{p_i}{\Vert A_{i, :} \Vert_2^2}, i \in [m]\right) A,
	$$
	and $p_i$ denotes the probability of the index $i$ being selected into $\mathcal{J}$.
	Meanwhile, the parameters in Corollary \ref{full} lead to 
	$$
	H_k=\mathop{\mathbb{E}}\limits_{\mathcal{J} \in \bar{\Omega}} \left[ \sum\limits_{i\in \mathcal{J}} \omega^{(k)}_i \frac{e_i e_i^{\top}}{\Vert A_{i, :} \Vert_2^2} \right] = \operatorname{diag}\left( \omega^{(k)}_i \frac{p_i}{\Vert A_{i, :} \Vert_2^2}, i \in [m]\right) \geq \omega_{\min} \operatorname{diag}\left( \frac{p_i}{\Vert A_{i, :} \Vert_2^2}, i \in [m]\right) ,
	$$
	and
	$$
	\lambda_{\max}^{(k)}=\mathop{\max}\limits_{\mathcal{J} \in \bar{\Omega}} \lambda_{\max} \left( A_{\mathcal{J}, :}^\top \operatorname{diag}\left(\frac{\omega^{(k)}_i}{\Vert A_{i, :} \Vert_2^2}, i \in \mathcal{J} \right) A_{\mathcal{J}, :} \right) \leq \omega_{\max} \lambda_{\max}^{\text{block}}.
	$$
	Letting $\zeta_k = \zeta$, then $$\rho_k=\zeta(2-\zeta)\frac{\sigma_{\min}^{2} (H_k^{\frac{1}{2}}A)}{\lambda_{\max}^{(k)}} \geq \zeta(2-\zeta)\frac{\omega_{\min} \sigma_{\min}(W)}{\omega_{\max} \lambda_{\max}^{\text{block}}},$$ 
	which indicates that
	$$
	\prod \limits_{i=0}^{k-1} \left(1-\rho_i \right) \leq \left(1-\zeta(2-\zeta)\frac{\omega_{\min} \sigma_{\min} (W)}{\omega_{\max} \lambda_{\max}^{\text{block}}}\right)^k,
	$$
	where the equality holds if $\omega^{(k)}_i = \omega$ is a constant.
	Therefore, Corollary \ref{full} leads to a better convergence factor for the RABK method.
\end{remark}

\subsection{Modified basic method}
An obvious weakness of Algorithm \ref{BSM} is that $x^{k+1}=x^k$ when the sampling matrix $S_k \in \mathcal{Q}_k^c$. Hence if we ensure that the sampling matrix $S_k\in \mathcal{Q}_k$, i.e. $S_k^\top(Ax^k-b)\neq 0$, then $\|x^{k+1}-A^{\dagger}b\|_2<\|x^k-A^{\dagger}b\|_2$ (see \eqref{descent-xie}), which could reduce the number of iterations of the basic method. For this reason, we construct the modified basic method described in Algorithm \ref{ibm-xie}.

\begin{algorithm}[htpb]
	\caption{Modified basic method}\label{ibm-xie}
	\begin{algorithmic}
		\STATE{\textbf{Input:} $A \in \mathbb{R}^{m \times n}$, $b \in \mathbb{R}^m$, probability spaces $\{(\Omega_k, \mathcal{F}_k, P_k)\}_{k\geq 0}$, $\{\zeta_k\}_{k \geq 0}$ with $\zeta_k \in (0, 2)$, $k=0$, and the initial point $x^0 \in \text{Range}(A^\top)$.}
		\begin{enumerate}
			\item[1:] Randomly select a sampling matrix $S_k\in \Omega_k$ until $S_k^\top (Ax^k-b)\neq 0$.
			\item[2:] Compute the parameter $L_{\text{adap}}^{(k)}$ in \eqref{Lk} and set $\alpha_k=(2-\zeta_k)L_{\text{adap}}^{(k)}$.
			\item[3:] Update $x^{k+1}=x^k-\alpha_k A^\top S_k S_k^\top (Ax^k-b)$.
			\item[4:] If the stopping rule is satisfied, stop and go to output. Otherwise, set $k=k+1$ and return to Step $1$.
		\end{enumerate}
		\STATE{\textbf{Output:} The approximate solution $x^k$.}
	\end{algorithmic}
\end{algorithm}

We note that Algorithm \ref{ibm-xie} is well-defined. This is because if the set $\mathcal{Q}_k$ is empty, then  Lemma \ref{xie-empty} implies that $x^k$ is already a solution of the linear system \eqref{LS}. In addition, Theorem \ref{CRS-ASGD} is also applicable to Algorithm \ref{ibm-xie}.

\section{Acceleration by adaptive heavy ball momentum}

In this section, we aim to enhance the basic  method by incorporating adaptive heavy ball momentum. Suppose that $x^0\in \text{Range}(A^\top)$ is an arbitrary initial point and let $x^1$ be generated by Algorithm \ref{BSM} with $\zeta_0=1$. At the $k$-th iteration $(k\geq1)$, the stochastic heavy ball momentum (SHBM) method updates with the following iterative strategy
$$
x^{k+1}=x^{k}-\alpha_{k} \nabla f_{S_k}(x^{k})+\beta_k(x^{k}-x^{k-1}),
$$
where $S_k$ is randomly chosen from $\Omega_k$, $\alpha_k$ is the step-size, and $\beta_k$ is the momentum parameter. We pay particular attention to the selection of the parameters $\alpha_k$ and $\beta_k$. Specifically, we expect to choose  $\alpha_k$ and $\beta_k$ that the error $\|x^{k+1}-A^\dagger b\|_2$ is minimized, i.e. the minimizers of  the following constrained optimization problem
\begin{equation}\label{opt-prob}
	\begin{aligned}
		\min\limits_{ \alpha,\beta\in\mathbb{R}}& \ \ \|x-A^\dagger b\|_2^2\\
		\text{subject to}& \ \ x=x^{k}-\alpha \nabla f_{S_k}(x^{k})+\beta(x^{k}-x^{k-1}).
	\end{aligned}
\end{equation}
If $\| \nabla f_{S_k}(x^k) \|_2^2 \| x^k-x^{k-1} \|_2^2 - \langle \nabla f_{S_k}(x^k), x^k-x^{k-1} \rangle^2\neq 0$, then the minimizers of \eqref{opt-prob}
are
\begin{equation}\label{ab-xie-0502}
	\left\{\begin{array}{ll}
		\alpha_k
		=
		\frac{\| x^k-x^{k-1} \|_2^2 \langle\nabla f_{S_k}(x^{k}),x^k-A^\dagger b\rangle - \langle \nabla f_{S_k}(x^k),x^k-x^{k-1} \rangle \langle x^k-x^{k-1}, x^k-A^\dagger b \rangle}{\| \nabla f_{S_k}(x^k) \|_2^2 \| x^k-x^{k-1} \|_2^2 - \langle \nabla f_{S_k}(x^k), x^k-x^{k-1} \rangle^2},
		\\
		\\
		\beta_k
		=
		\frac{\langle \nabla f_{S_k}(x^k),x^k-x^{k-1} \rangle \langle\nabla f_{S_k}(x^{k}),x^k-A^\dagger b\rangle - \| \nabla f_{S_k}(x^k) \|_2^2 \langle x^k-x^{k-1}, x^k-A^\dagger b \rangle}{\| \nabla f_{S_k}(x^k) \|_2^2 \| x^k-x^{k-1} \|_2^2 - \langle \nabla f_{S_k}(x^k), x^k-x^{k-1} \rangle^2},
	\end{array}
	\right.
\end{equation}
which seem to be intractable since $\langle\nabla f_{S_k}(x^{k}),x^k-A^\dagger b\rangle$ and $\langle x^{k}-x^{k-1}, x^k-A^\dagger b \rangle$ include an unknown vector $A^\dagger b$.  In fact, we can calculate the two terms directly to get rid of $A^\dagger b$.

On the one hand, noting that $ \nabla f_{S_k}(x^{k})=A^\top S_k S_k^\top (Ax^k-b)$ and $AA^\dagger b=b$, then we know that $$\langle\nabla f_{S_k}(x^{k}),x^k-A^\dagger b\rangle=\langle S_k^\top (Ax^k-b),S_k^\top A(x^k-A^\dagger b)\rangle=\|S_k^\top (Ax^k-b)\|_2^2.$$
On the other hand, we note that for $ k\geq2 $, $x^k$ is the orthogonal projection of $A^{\dagger} b$ onto the affine set
$$\Pi_{k-1}:=x^{k-1}+\mbox{Span}\{\nabla f_{S_{k-1}}(x^{k-1}), x^{k-1}-x^{k-2}\}$$
and hence
$$\langle x^{k}-x^{k-1},x^k-A^\dagger b  \rangle=0.$$
A geometric interpretation is presented in Figure \ref{GI1}. For $k=1$, by the definition of $x^1$, we also have that $\langle x^{1}-x^{0}, x^{1}-A^\dag b \rangle=0$. Therefore,   \eqref{ab-xie-0502} can be simplified to
\begin{equation}
	\label{Parameters}
	\left\{\begin{array}{ll}
		\alpha_k
		=
		\frac{\| x^k-x^{k-1} \|_2^2 \|S_k^\top (Ax^k-b)\|_2^2}{\| \nabla f_{S_k}(x^k) \|_2^2 \| x^k-x^{k-1} \|_2^2 - \langle \nabla f_{S_k}(x^k), x^k-x^{k-1} \rangle^2},
		\\
		\\
		\beta_k
		=
		\frac{\langle \nabla f_{S_k}(x^k),x^k-x^{k-1} \rangle \|S_k^\top (Ax^k-b)\|_2^2}{\| \nabla f_{S_k}(x^k) \|_2^2 \| x^k-x^{k-1} \|_2^2 - \langle \nabla f_{S_k}(x^k), x^k-x^{k-1} \rangle^2}.
	\end{array}
	\right.
\end{equation}
To sum up, for any $k\geq1$, if $\| \nabla f_{S_k}(x^k) \|_2^2 \| x^k-x^{k-1} \|_2^2 - \langle \nabla f_{S_k}(x^k), x^k-x^{k-1} \rangle^2\neq 0$, i.e. $\text{dim}(\Pi_k)=2$, can be guaranteed, then the minimizers of \eqref{opt-prob} can be computed by \eqref{Parameters}.

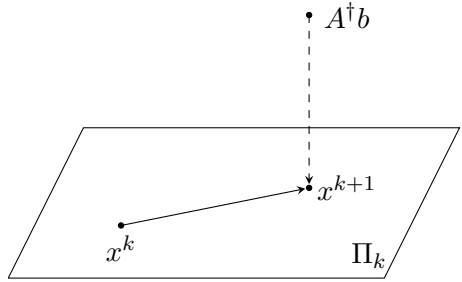
\begin{figure}[hptb]
	\centering
	\begin{tikzpicture}
		\draw (0,0)--(5,0)--(6,2)--(1,2)--(0,0);
		
		\filldraw (1.5,0.7) circle [radius=1pt]
		(4,3.5) circle [radius=1pt]
		(4,1.2) circle [radius=1pt];
		\draw (1.5,0.4) node {$x^k$};
		\draw (4.5,3.5) node {$A^\dagger b$};
		\draw (4.5,1.2) node {$x^{k+1}$};
		\draw (4.8,0.3) node {$\Pi_k$};
		\draw [dashed,-stealth] (4,3.5) -- (4,1.25);
		\draw [-stealth] (1.5,0.7) -- (3.95,1.19);
	\end{tikzpicture}
	\caption{A geometric interpretation of our design. The next iterate $x^{k+1}$  arises such that $x^{k+1}$ is the orthogonal projection of $A^{\dagger} b$ onto the affine set $\Pi_k=x^k+\mbox{Span}\{\nabla f_{S_k}(x^k), x^k-x^{k-1}\}$.}	
	\label{GI1}
\end{figure}

The following proposition indicates that if the sampling matrices $S_k$ are chosen such that $S_k^\top (Ax^k-b)\neq 0$ for $k\geq 0$, then $\text{dim}(\Pi_\ell)=2(\ell\geq1)$.
\begin{prop}
	\label{prop}
	Let $x^0 \in \text{Range}(A^\top)$ be an initial point and suppose that $x^1$ is generated by Algorithm \ref{BSM} with $\zeta_0=1$. Let  $\{x^k\}_{k\geq2}$ be the sequence obtained by solving the optimization problem \eqref{opt-prob}. We have the following results:
	\begin{itemize}
		\item[$(\romannumeral1)$]  For any fixed $\ell$, if  $S_\ell^\top (Ax^\ell-b)\neq 0$, then  $x^{\ell+1}\neq x^\ell$;
		\item[(\romannumeral2)] If $S_\ell^\top (Ax^\ell-b)\neq 0$ and $S_{\ell-1}^\top (Ax^{\ell-1}-b)\neq 0 (\ell\geq1)$, then  $\text{dim}(\Pi_\ell)=2$.
	\end{itemize}
\end{prop}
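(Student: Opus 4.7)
The approach is to exploit the orthogonal projection interpretation of the new iterate, as depicted in Figure \ref{GI1}. The crucial geometric observation is that for every $k\geq 1$, the iterate $x^{k}$ is the orthogonal projection of $A^\dagger b$ onto the affine set $\Pi_{k-1}$, so that $A^\dagger b - x^{k}$ is orthogonal to the whole linear space $\mbox{Span}\{\nabla f_{S_{k-1}}(x^{k-1}), x^{k-1}-x^{k-2}\}$, and in particular $\langle x^k-x^{k-1}, A^\dagger b-x^k\rangle=0$. A second recurring computation is that, because $\nabla f_{S_\ell}(x^\ell)=A^\top S_\ell S_\ell^\top(Ax^\ell-b)$ and $AA^\dagger b=b$, one has
$$\langle \nabla f_{S_\ell}(x^\ell),\, A^\dagger b-x^\ell\rangle=-\|S_\ell^\top(Ax^\ell-b)\|_2^2,$$
which is nonzero exactly under the hypothesis $S_\ell^\top(Ax^\ell-b)\neq 0$.

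For part (i), the plan is to argue by contradiction. Suppose $x^{\ell+1}=x^\ell$. Since $x^{\ell+1}$ is defined as a minimizer of \eqref{opt-prob}, $A^\dagger b-x^{\ell+1}=A^\dagger b-x^\ell$ must be orthogonal to every direction in $\mbox{Span}\{\nabla f_{S_\ell}(x^\ell), x^\ell-x^{\ell-1}\}$; in particular it is orthogonal to $\nabla f_{S_\ell}(x^\ell)$. But the computation above shows this inner product equals $-\|S_\ell^\top(Ax^\ell-b)\|_2^2\neq 0$, a contradiction.

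For part (ii), the goal is to show that $\nabla f_{S_\ell}(x^\ell)$ and $x^\ell-x^{\ell-1}$ are linearly independent. Applying part (i) at index $\ell-1$ (which is permitted because $S_{\ell-1}^\top(Ax^{\ell-1}-b)\neq 0$) gives $x^\ell-x^{\ell-1}\neq 0$, and Lemma \ref{lemma-non} together with $S_\ell^\top(Ax^\ell-b)\neq 0$ gives $\nabla f_{S_\ell}(x^\ell)\neq 0$. Suppose for contradiction that $\nabla f_{S_\ell}(x^\ell)=c\,(x^\ell-x^{\ell-1})$ for some scalar $c$. For $\ell\geq 2$, the projection characterization of $x^\ell$ yields $\langle x^\ell-x^{\ell-1}, A^\dagger b-x^\ell\rangle=0$, hence
$$\langle \nabla f_{S_\ell}(x^\ell),\, A^\dagger b-x^\ell\rangle = c\langle x^\ell-x^{\ell-1},\, A^\dagger b-x^\ell\rangle = 0,$$
contradicting $\langle \nabla f_{S_\ell}(x^\ell), A^\dagger b-x^\ell\rangle=-\|S_\ell^\top(Ax^\ell-b)\|_2^2\neq 0$.

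The base case $\ell=1$ requires one extra verification: since $x^1$ is produced by Algorithm \ref{BSM} with $\zeta_0=1$, the Polyak-type step-size $\alpha_0=L^{(0)}_{\text{adap}}$ is precisely the value that makes $x^1$ the orthogonal projection of $A^\dagger b$ onto $x^0+\mbox{Span}\{\nabla f_{S_0}(x^0)\}$, so $\langle x^1-x^0, A^\dagger b-x^1\rangle=0$ still holds and the same contradiction argument applies. This small verification, together with confirming that Figure \ref{GI1}'s projection interpretation is not just heuristic but is in fact the defining property of any minimizer of \eqref{opt-prob}, is the only delicate point; once these are in hand the rest is a straightforward two-line calculation in each part.
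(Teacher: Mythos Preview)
Your argument is correct and takes a somewhat more streamlined route than the paper's.

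For part (i), the paper splits into cases according to $\dim(\Pi_\ell)$: when $\dim(\Pi_\ell)=1$ it computes $x^{\ell+1}$ explicitly as $x^\ell-L_{\text{adap}}^{(\ell)}\nabla f_{S_\ell}(x^\ell)$, and when $\dim(\Pi_\ell)=2$ it appeals to the closed-form \eqref{Parameters} to show $\alpha_\ell\neq 0$. You bypass this case analysis by using only the first-order optimality condition of the projection problem, which is valid regardless of $\dim(\Pi_\ell)$: if $x^{\ell+1}=x^\ell$ then $A^\dagger b-x^\ell\perp\nabla f_{S_\ell}(x^\ell)$, contradicting $\langle\nabla f_{S_\ell}(x^\ell),A^\dagger b-x^\ell\rangle=-\|S_\ell^\top(Ax^\ell-b)\|_2^2\neq 0$. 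This is cleaner. One small point worth making explicit: your contradiction argument as written (``$x^{\ell+1}$ is defined as a minimizer of \eqref{opt-prob}'') literally covers only $\ell\geq 1$; for $\ell=0$ you need the observation from your last paragraph that $x^1$ is the projection of $A^\dagger b$ onto the line $x^0+\mbox{Span}\{\nabla f_{S_0}(x^0)\}$, after which the same orthogonality contradiction applies.

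For part (ii), the paper first proves the additional identity $\langle x^{\ell+1}-x^\ell,\,x^\ell-x^{\ell-1}\rangle=0$ (using the projection characterizations at both steps $\ell-1$ and $\ell$) and then argues that if $\dim(\Pi_\ell)=1$ then $x^{\ell+1}-x^\ell$ is parallel to $x^\ell-x^{\ell-1}$, forcing one of them to vanish in violation of part (i). Your route is more direct: you use only the orthogonality $\langle x^\ell-x^{\ell-1},A^\dagger b-x^\ell\rangle=0$ at step $\ell-1$, and if $\nabla f_{S_\ell}(x^\ell)$ were parallel to $x^\ell-x^{\ell-1}$ you immediately contradict the same inner-product computation used in part (i). Your approach avoids introducing $x^{\ell+1}$ altogether in part (ii), which is tidier; the paper's version, on the other hand, produces the by-product $\langle x^{\ell+1}-x^\ell,x^\ell-x^{\ell-1}\rangle=0$, which is essentially Proposition \ref{prop-0426}(i) and reappears later.
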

\begin{proof}
	$(\romannumeral1)$ Consider the case where $\ell=0$. If $S_0^\top (Ax^0-b)\neq 0$, it follows from Lemma \ref{lemma-non} that $\nabla f_{S_0}(x^0)=A^\top S_0S_0^\top (Ax^0-b)\neq 0$ . Thus, we can directly get $x^1 \neq x^0$. For the case where $\ell\geq1$, by the assumption we know that $\nabla f_{S_\ell}(x^\ell)\neq 0$ which implies $\text{dim}(\Pi_\ell)\geq1$.
	
	If $\text{dim}(\Pi_\ell)=1$, i.e. $\nabla f_{S_\ell}(x^\ell)$ is parallel to $x^\ell-x^{\ell-1}$, we have $\Pi_\ell=x^\ell+\mbox{Span}\{\nabla f_{S_\ell}(x^\ell)\}$. Thus, $x^{\ell+1}=x^\ell-L_{\text{adap}}^\ell \nabla f_{S_\ell}(x^\ell)$ is not equal to $x^\ell$.
	
	For the case where $\text{dim}(\Pi_\ell)=2$, we can also derive $x^{\ell+1}\neq x^\ell$. Otherwise, we will have $\alpha_{\ell}=0$ and $\beta_{\ell}=0$. It follows from $\text{dim}(\Pi_\ell)=2$, we have $\|x^\ell-x^{\ell-1}\|_2\neq0$, $\| S^\top(Ax^\ell-b)\|_2\neq 0$, and $\| \nabla f_{S_\ell}(x^\ell) \|_2^2 \| x^\ell-x^{\ell-1} \|_2^2 - \langle \nabla f_{S_\ell}(x^\ell), x^\ell-x^{\ell-1} \rangle^2\neq 0$. Then from \eqref{Parameters}, we know that $\alpha_\ell\neq0$, which contradicts to the assumption that $\alpha_\ell=0$.
	
	$(\romannumeral2)$  According to $(\romannumeral1)$,  we can get
	\begin{equation}
		\label{xie-0423-2}
		x^{\ell}-x^{\ell-1}\neq 0\ \text{and} \ x^{\ell+1}-x^\ell\neq 0.
	\end{equation}
	Moreover, for $\ell\geq2$, we have
	\begin{equation}\label{xie-0423-1}
		\begin{aligned}
			\langle x^{\ell+1}-x^\ell, x^\ell-x^{\ell-1} \rangle
			&=\langle (x^{\ell+1}-A^\dagger b)-(x^\ell-A^\dagger b), x^\ell-x^{\ell-1} \rangle \\
			&=\langle x^{\ell+1}-A^\dagger b, x^\ell-x^{\ell-1} \rangle-\langle x^\ell-A^\dagger b, x^\ell-x^{\ell-1} \rangle \\
			&=0.
		\end{aligned}
	\end{equation}
	The last equality follows from the fact that the next iterate $x^{i+1}$  arises as the orthogonal projection of $A^{\dagger} b$ onto the affine set $\Pi_i=x^i+\mbox{Span}\{\nabla f_{S_i}(x^i), x^i-x^{i-1}\}$, where $i\in\{\ell-1,\ell\}$ (see Figure \ref{GI1}). Besides, one can verify that \eqref{xie-0423-1} also holds for $\ell=1$.
	
	Since $\nabla f_{S_{\ell}}(x^\ell)\neq0$, we know that $\text{dim}(\Pi_\ell)\geq1$. If $\text{dim}(\Pi_\ell)=1$, i.e. $\nabla f_{S_\ell}(x^\ell)$ is parallel to $x^\ell-x^{\ell-1}$. Then we have $x^{\ell+1}-x^\ell=-\alpha_{\ell}\nabla f_{S_\ell}(x^\ell)+\beta_\ell(x^\ell-x^{\ell-1})$ which
	is also parallel to $x^\ell-x^{\ell-1}$. Thus, by \eqref{xie-0423-1}, we know that either $x^{\ell+1}-x^\ell=0$ or $x^\ell-x^{\ell-1}=0$, which contradicts \eqref{xie-0423-2}. Hence, we have $\text{dim}(\Pi_\ell)=2$. This completes the proof of this proposition.
\end{proof}
Now, we have already constructed the ASHBM method  described in Algorithm \ref{ASHBM}.

\begin{algorithm}[htpb]
	\caption{ Adaptive stochastic heavy ball momentum (ASHBM)}
	\label{ASHBM}
	\begin{algorithmic}
		\STATE{\textbf{Input:} $A \in \mathbb{R}^{m \times n}$, $b \in \mathbb{R}^m$, probability spaces $\{(\Omega_k, \mathcal{F}_k, P_k)\}_{k\geq 0}$, $k=1$, and the initial point $x^0 \in \text{Range}(A^\top)$.}
		\begin{enumerate}
			\item[1:] Update $x^1$ by the modified basic method (Algorithm \ref{ibm-xie}) with $\zeta_0=1$.
			\item[2:] Randomly select a sampling matrix $S_k \in \Omega_k$ until $S_k^\top (Ax^k-b)\neq 0$.
			\item[3:] Compute the parameters $\alpha_k$ and $\beta_k$ in \eqref{Parameters}.
			\item[4:]
			Update $x^{k+1}=x^k-\alpha_k A^\top S_k S_k^\top (Ax^k-b)+\beta_k (x^k-x^{k-1})$.
			
			\item[5:] If the stopping rule is satisfied, stop and go to output. Otherwise, set $k=k+1$ and return to Step $2$.
		\end{enumerate}
		\STATE{\textbf{Output:} The approximate solution $x^k$.}
	\end{algorithmic}
\end{algorithm}

\subsection{Convergence analysis}

Let us first introduce some auxiliary variables. We denote
$$
\tilde{x}^{k+1}:=x^k-L_{\text{adap}}^{(k)} A^\top S_k S_k^\top(Ax^k-b),
$$
where $L_{\text{adap}}^{(k)}$ is given by \eqref{Lk}. Let $\mathcal{Q}_k$ be defined as \eqref{xie-Qk}. Starting from $x^k$ and choosing $S_k\in \mathcal{Q}_k$, we know that $\tilde{x}^{k+1}$ can be obtained by taking one step of Algorithm \ref{ibm-xie} with $\zeta_k=1$.
Set $$u_k:=\langle \nabla f_{S_k}(x^k),x^k-x^{k-1} \rangle \nabla f_{S_k}(x^k)-\|\nabla f_{S_k}(x^k)\|^2_2(x^k-x^{k-1}).$$
We use $\theta_k$ to denote the angle between $\tilde{x}^{k+1}-A^\dagger b$ and $u_k$. Let
\begin{equation}
	\label{def-gamma}
	\gamma_k:=\inf\limits_{S_k \in \mathcal{Q}_k} \left\{ \cos^2 \theta_k \right\}.
\end{equation}

We have the following convergence result for Algorithm \ref{ASHBM}.
\begin{theorem}
	\label{CRS-AmSGD}
	Suppose that the  linear system \eqref{LS} is consistent and the probability spaces $\{(\Omega_k, \mathcal{F}_k, P_k)\}_{k\geq 0}$ satisfy Assumption \ref{Ass}. Let $\{x^k\}_{k \geq0}$ be the iteration sequence generated by Algorithm \ref{ASHBM}.
	Then
	$$
	\mathbb{E}\left[\|x^{k+1}-A^\dagger b\|_2^2 \ \big| \  x^k\right] \leq ( 1-\gamma_k) \left( 1-\frac{\sigma_{\min}^{2} (H_k^{\frac{1}{2}}A)}{\lambda_{\max}^{(k)}} \right) \|x^k-A^\dagger b\|_2^2,
	$$
	where $\gamma_k$, $H_k$, and $\lambda_{\max}^{(k)}$ are given by \eqref{def-gamma}, \eqref{matrix-H}, and \eqref{lambda-max}, respectively.
\end{theorem}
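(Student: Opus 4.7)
The plan is to exploit the geometric picture the paper has already set up: by construction of $\alpha_k,\beta_k$ via \eqref{opt-prob}--\eqref{Parameters}, the iterate $x^{k+1}$ is precisely the orthogonal projection of $A^\dagger b$ onto the affine plane $\Pi_k=x^k+\mathrm{Span}\{\nabla f_{S_k}(x^k),\,x^k-x^{k-1}\}$, while the auxiliary point $\tilde{x}^{k+1}=x^k-L_{\text{adap}}^{(k)}\nabla f_{S_k}(x^k)$ is the projection of $A^\dagger b$ onto the line $x^k+\mathrm{Span}\{\nabla f_{S_k}(x^k)\}$, which sits inside $\Pi_k$. Since Step~2 of Algorithm~\ref{ASHBM} forces $S_k\in\mathcal{Q}_k$, Proposition~\ref{prop}$(ii)$ guarantees $\dim(\Pi_k)=2$, so $\{\nabla f_{S_k}(x^k),u_k\}$ is an orthogonal basis of $\Pi_k-x^k$ (one checks directly that $\langle u_k,\nabla f_{S_k}(x^k)\rangle=0$ from the definition of $u_k$).

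First, I would record the Pythagorean identity: because the line is a subset of $\Pi_k$ and $A^\dagger b-x^{k+1}\perp (\Pi_k-x^k)$,
\begin{equation*}
\|\tilde{x}^{k+1}-A^\dagger b\|_2^2=\|x^{k+1}-A^\dagger b\|_2^2+\|x^{k+1}-\tilde{x}^{k+1}\|_2^2.
\end{equation*}
Second, I would show $\|x^{k+1}-\tilde{x}^{k+1}\|_2^2=\cos^2\theta_k\,\|\tilde{x}^{k+1}-A^\dagger b\|_2^2$. Using the orthogonal basis $\{\nabla f_{S_k}(x^k),u_k\}$, $x^{k+1}$ can be written as $\tilde{x}^{k+1}$ plus the projection of $A^\dagger b-\tilde{x}^{k+1}$ onto $u_k$, which gives $\|x^{k+1}-\tilde{x}^{k+1}\|_2^2=\langle A^\dagger b-\tilde{x}^{k+1},u_k\rangle^2/\|u_k\|_2^2$; the definition of $\theta_k$ as the angle between $\tilde{x}^{k+1}-A^\dagger b$ and $u_k$ turns this into the claimed cosine expression. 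Combining both steps yields the key identity
\begin{equation*}
\|x^{k+1}-A^\dagger b\|_2^2=(1-\cos^2\theta_k)\,\|\tilde{x}^{k+1}-A^\dagger b\|_2^2.
\end{equation*}

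Third, I would take the conditional expectation over $S_k$ (sampled from $\mathcal{Q}_k$ per Step~2). Applying $\cos^2\theta_k\geq\gamma_k$ from \eqref{def-gamma} pulls $(1-\gamma_k)$ out. It remains to bound $\mathbb{E}[\|\tilde{x}^{k+1}-A^\dagger b\|_2^2\mid x^k]$ by $(1-\sigma_{\min}^2(H_k^{1/2}A)/\lambda_{\max}^{(k)})\|x^k-A^\dagger b\|_2^2$, which follows directly from Theorem~\ref{CRS-ASGD} applied to Algorithm~\ref{ibm-xie} with $\zeta_k=1$ (so the prefactor $\zeta_k(2-\zeta_k)$ equals $1$); the fact that $\|S_k^\top(Ax^k-b)\|_2^2$ vanishes on $\mathcal{Q}_k^c$ is exactly what lets the conditional-on-$\mathcal{Q}_k$ bound match the unconditional-on-$\Omega_k$ bound used in that theorem.

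The main obstacle is the rigorous justification of step~two: checking that $x^{k+1}$ coincides with the orthogonal projection onto $\Pi_k$ (hence it is the minimizer of \eqref{opt-prob}) and that this projection decomposes cleanly through $\tilde{x}^{k+1}$ along the orthogonal direction $u_k$. Once $\dim(\Pi_k)=2$ is in hand via Proposition~\ref{prop}, the orthogonal decomposition of $\Pi_k-x^k$ into $\mathrm{Span}\{\nabla f_{S_k}(x^k)\}\oplus\mathrm{Span}\{u_k\}$ reduces the geometry to a two-dimensional picture, after which everything else is an assembly of these two ingredients.
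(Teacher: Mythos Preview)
Your proposal is correct and follows essentially the same route as the paper's own proof: the Pythagorean decomposition $\|x^{k+1}-A^\dagger b\|_2^2=\|\tilde{x}^{k+1}-A^\dagger b\|_2^2-\|x^{k+1}-\tilde{x}^{k+1}\|_2^2$, the identification $\|x^{k+1}-\tilde{x}^{k+1}\|_2^2=\cos^2\theta_k\,\|\tilde{x}^{k+1}-A^\dagger b\|_2^2$ via the projection along $u_k$, and the appeal to Theorem~\ref{CRS-ASGD} (applied to Algorithm~\ref{ibm-xie} with $\zeta_k=1$) for the bound on $\mathbb{E}[\|\tilde{x}^{k+1}-A^\dagger b\|_2^2\mid x^k]$. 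The paper writes the middle step as $x^{k+1}=\tilde{x}^{k+1}-l_k u_k$ with $l_k=\langle\tilde{x}^{k+1}-A^\dagger b,u_k\rangle/\|u_k\|_2^2$, which is exactly your orthogonal-basis computation phrased slightly differently.
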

\begin{proof}
	Since $x^{k+1}-A^\dagger b$ is orthogonal to $\Pi_k$ and $x^{k+1},\tilde{x}^{k+1}\in \Pi_k$, the Pythagorean Theorem implies that
	$$
	\|x^{k+1}-A^\dagger b\|_2^2=\|\tilde{x}^{k+1}-A^\dagger b\|_2^2-\|x^{k+1}-\tilde{x}^{k+1}\|_2^2.
	$$
	Since $\|u_k\|^2_2=\| \nabla f_{S_k}(x^k)\|^2_2\left(\| \nabla f_{S_k}(x^k) \|_2^2 \| x^k-x^{k-1} \|_2^2 - \langle \nabla f_{S_k}(x^k), x^k-x^{k-1} \rangle^2\right)$ and $\text{dim}(\Pi_k)=2$, we know that $u_k\neq0$. Let
	$l_k:=\frac{\langle \tilde{x}^{k+1}-A^\dag b,u_k \rangle}{\| u_k \|_2^2}$, then it holds that
	$$
	x^{k+1}=\tilde{x}^{k+1}-l_k u_k.
	$$
	Hence
	$$
	\|x^{k+1}-\tilde{x}^{k+1}\|_2^2
	=
	l_k^2 \|u_k\|_2^2
	=
	\frac{\langle \tilde{x}^{k+1}-A^\dagger b, u_k \rangle^2}{\|u_k\|_2^2}
	=\cos^2\theta_k\| \tilde{x}^{k+1}-A^\dagger b\|^2_2,
	$$
	where $\theta_k$ denotes the angle between $\tilde{x}^{k+1}-A^\dagger b$ and $u_k$.
	Thus
	$$
	\begin{aligned}
		\mathbb{E}\left[\|x^{k+1}-A^\dagger b\|_2^2 \big|  x^k\right]&=\mathbb{E}\left[\|\tilde{x}^{k+1}-A^\dagger b\|_2^2 \big|  x^k\right]-\mathbb{E}\left[\|x^{k+1}-\tilde{x}^{k+1}\|_2^2\big|  x^k\right]
		\\
		&=\mathbb{E}\left[(1-\cos^2\theta_k)\|\tilde{x}^{k+1}-A^\dagger b\|_2^2 \big|  x^k\right]
		\\
		&\leq
		\left( 1-\gamma_k \right) \mathbb{E}\left[\|\tilde{x}^{k+1}-A^\dagger b\|_2^2 \big|  x^k\right]  \\
		&\leq
		\left( 1-\gamma_k \right) \left( 1-\frac{\sigma_{\min}^{2} (H_k^{\frac{1}{2}}A)}{\lambda_{\max}^{(k)}} \right) \|x^k-A^\dag b\|_2^2,
	\end{aligned}
	$$
	where the last inequality follows from the definition of  $\tilde{x}^{k+1}$ which can be obtained by one step of  Algorithm \ref{ibm-xie} with $\zeta_k=1$ and Theorem \ref{CRS-ASGD} is  applicable to Algorithm \ref{ibm-xie}.
\end{proof}

\begin{remark}
	Upon comparison of Theorem \ref{CRS-ASGD} and Theorem \ref{CRS-AmSGD}, it can be observed that the ASHBM method exhibits convergence bound that is at least as that of the basic method.
	Particularly, for certain probability spaces $\{(\Omega_k, \mathcal{F}_k, P_k)\}_{k\geq 0}$, the parameter $\gamma_k$ in Theorem \ref{CRS-AmSGD} can  be strictly greater than $0$ (i.e. $\gamma_k>0$) as long as $Ax^k\neq  b$ (see Remark \ref{remak-xie-0427}).  Hence, our result positively addresses the open problem proposed by Loizou and Richt{\'a}rik for quadratic objectives \cite[Section 4.1]{loizou2020momentum}.
\end{remark}

\section{Connection to conjugate gradient-type methods}

In this section, we will introduce an equivalent form of the ASHBM method. Using this form, we show that the \emph{conjugate gradient normal equation error} (CGNE) method \cite[Section 11.3.9]{golub2013matrix}, a variant of the conjugate gradient method, is actually a  special case of ASHBM. Inspired by this observation, we obtain a novel framework of the stochastic conjugate gradient (SCG) methods for solving linear systems, from which a range of SCG methods can be derived via setting different probability spaces $ \{ \Omega_k, \mathcal{F}_k, P_k \}_{k\geq 0} $.

\subsection{An equivalent form of ASHBM}

In this subsection, we will derive an equivalent expression for ASHBM and then further study some properties of ASHBM. We have the following result.

\begin{theorem}\label{SCGNE}
	Suppose that $x^0\in \text{Range}(A^\top)$ is the  initial point in Algorithm \ref{ASHBM}  and set $p_0=-\nabla f_{S_{0}}(x^{0})=-A^\top S_0S_0^\top (Ax^0-b)$. Then for any $k\geq0$,  Algorithm \ref{ASHBM} can be  equivalently rewritten as
	\begin{equation}
		\label{EF}
		\left\{\begin{array}{ll}
			\delta_k =\|S_k^\top (Ax^k-b)\|_2^2 / \|p_k\|_2^2,
			\\[1.7mm]
			x^{k+1}=x^k+\delta_k p_k,
			\\[1.7mm]
			\nabla f_{S_{k+1}}(x^{k+1})= A^\top S_{k+1}S_{k+1}^\top (Ax^{k+1}-b),
			\\[1.7mm]
			\eta_k=\langle \nabla f_{S_{k+1}}(x^{k+1}), p_k \rangle/ \|p_k\|_2^2,
			\\[1.7mm]
			p_{k+1}= - \nabla f_{S_{k+1}}(x^{k+1})+\eta_k p_k.
		\end{array}
		\right.
	\end{equation}
\end{theorem}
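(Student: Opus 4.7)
The plan is to prove the equivalence by induction on $k$, with the central inductive claim being that $x^{k+1} - x^k = \delta_k p_k$ for every $k \geq 0$, where the sequences $\{\delta_k\}$ and $\{p_k\}$ are those defined in \eqref{EF}. Once this identity is established, the rewrite of Algorithm \ref{ASHBM} in the claimed form is immediate: the update $x^{k+1} = x^k + \delta_k p_k$ is exactly the second line of \eqref{EF}, and the direction $p_k = -\nabla f_{S_k}(x^k) + \eta_{k-1} p_{k-1}$ together with $\delta_k = \|S_k^\top(Ax^k-b)\|_2^2/\|p_k\|_2^2$ produces the formulas for $\alpha_k$ and $\beta_k$ in \eqref{Parameters}.

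For the base case $k=0$, I would note that Algorithm \ref{ASHBM} obtains $x^1$ from Algorithm \ref{ibm-xie} with $\zeta_0=1$, so $x^1 = x^0 - L_{\text{adap}}^{(0)} \nabla f_{S_0}(x^0)$. Since $p_0 := -\nabla f_{S_0}(x^0)$ and $\delta_0 = \|S_0^\top(Ax^0-b)\|_2^2/\|p_0\|_2^2$ agrees with $L_{\text{adap}}^{(0)}$ by \eqref{Lk}, the identity $x^1 - x^0 = \delta_0 p_0$ holds. For the inductive step, I would assume $x^k - x^{k-1} = \delta_{k-1} p_{k-1}$ and substitute this into the parameters $\alpha_k, \beta_k$ from \eqref{Parameters}. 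The $\delta_{k-1}$ factors cancel cleanly in both numerator and denominator, yielding
\begin{equation*}
\alpha_k = \frac{\|p_{k-1}\|_2^2 \, \|S_k^\top(Ax^k-b)\|_2^2}{\|\nabla f_{S_k}(x^k)\|_2^2 \|p_{k-1}\|_2^2 - \langle \nabla f_{S_k}(x^k), p_{k-1}\rangle^2}, \qquad \beta_k \delta_{k-1} = \alpha_k \eta_{k-1}.
\end{equation*}
A short computation using $\eta_{k-1} = \langle \nabla f_{S_k}(x^k), p_{k-1}\rangle/\|p_{k-1}\|_2^2$ shows that
\begin{equation*}
\|p_k\|_2^2 = \|\nabla f_{S_k}(x^k)\|_2^2 - \frac{\langle \nabla f_{S_k}(x^k), p_{k-1}\rangle^2}{\|p_{k-1}\|_2^2},
\end{equation*}
so that $\delta_k = \|S_k^\top(Ax^k-b)\|_2^2/\|p_k\|_2^2$ matches $\alpha_k$ exactly. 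Consequently, the ASHBM step can be rewritten as
$
x^{k+1} - x^k = -\alpha_k \nabla f_{S_k}(x^k) + \beta_k \delta_{k-1} p_{k-1} = \delta_k\bigl(-\nabla f_{S_k}(x^k) + \eta_{k-1} p_{k-1}\bigr) = \delta_k p_k,
$
closing the induction.

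The only subtle point is the well-definedness of the $\delta_k$ and $\eta_k$ in \eqref{EF}, that is, ensuring $\|p_k\|_2 \neq 0$ at every iteration. Here I would invoke Proposition \ref{prop}: along with $x^k - x^{k-1} = \delta_{k-1}p_{k-1}$, the nondegeneracy condition $\mathrm{dim}(\Pi_k) = 2$ established there translates directly into $\|\nabla f_{S_k}(x^k)\|_2^2 \|p_{k-1}\|_2^2 - \langle \nabla f_{S_k}(x^k), p_{k-1}\rangle^2 \neq 0$, which via the formula above guarantees $\|p_k\|_2 \neq 0$. This is the main technical obstacle; apart from it, the proof is a bookkeeping exercise in algebra that essentially amounts to checking that the two-parameter family $\{\alpha_k, \beta_k\}$ and the two-parameter family $\{\delta_k, \eta_{k-1}\}$ parameterize the same affine update in the plane $\Pi_k$, followed by matching coefficients against the basis $\{\nabla f_{S_k}(x^k), p_{k-1}\}$, which is linearly independent precisely by Proposition \ref{prop}.
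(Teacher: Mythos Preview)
Your proposal is correct, and your inductive argument is a valid and self-contained route to the theorem. It differs from the paper's proof mainly in how the identity $\alpha_k=\delta_k$ is obtained.

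The paper does not substitute the inductive hypothesis into the explicit formulas \eqref{Parameters}. Instead, it sets $\tilde p_k:=x^{k+1}-x^k$, observes that $\beta_{k+1}/\alpha_{k+1}$ is scale-invariant in $\tilde p_k$ (so $\bar p_k:=\tilde p_k/\alpha_k$ satisfies the same recursion as $p_k$ with the same initial value, hence $\bar p_k=p_k$), and then identifies $\alpha_k=\delta_k$ geometrically: from $\langle x^{k+1}-A^\dagger b,\,p_k\rangle=0$ it gets $\alpha_k=\langle A^\dagger b-x^k,p_k\rangle/\|p_k\|_2^2$, and from $\langle A^\dagger b-x^k,p_{k-1}\rangle=0$ it gets $\langle A^\dagger b-x^k,p_k\rangle=\|S_k^\top(Ax^k-b)\|_2^2$. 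Your approach avoids reintroducing $A^\dagger b$ and the projection property altogether by computing $\|p_k\|_2^2$ directly and matching denominators; this is more elementary and arguably cleaner, since all the orthogonality information has already been encoded in \eqref{Parameters}. The paper's approach, on the other hand, yields as a by-product the identity $\langle x^{k+1}-A^\dagger b,p_k\rangle=0$, which it then uses in Proposition~\ref{prop-0426}.
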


\begin{proof}
	Set $\alpha_0:=\|S_0^\top (Ax^0-b)\|_2^2 / \|\nabla f_{S_{0}}(x^{0})\|_2^2$, $\beta_0:=0$, and $x^{-1}=x^0$.
	Then according to the expression of $x^{k+1}$ in Algorithm \ref{ASHBM}, we have
	$$
	x^{k+1}-x^k=-\alpha_k \nabla f_{S_k}(x^k)+\beta_k(x^k-x^{k-1}).
	$$
	For $k\geq0$, we set $\tilde{p}_k:=x^{k+1}-x^k$. Then we have
	$$\tilde{p}_{k+1}=-\alpha_{k+1} \nabla f_{S_{k+1}}(x^{k+1})+\beta_{k+1}  \tilde{p}_{k}.$$
	From \eqref{Parameters} and Proposition \ref{prop}, we know that $\alpha_{k+1}\neq 0$ and
	$$
	\frac{\beta_{k+1}}{\alpha_{k+1}}=\frac{\langle \nabla f_{S_{k+1}}(x^{k+1}), x^{k+1}-x^{k} \rangle}{\|x^{k+1}-x^{k}\|_2^2}= \frac{\langle \nabla f_{S_{k+1}}(x^{k+1}), \tilde{p}_{k} \rangle}{\|\tilde{p}_{k}\|_2^2}.
	$$
	Hence we have
	$$
	\begin{aligned}
		\tilde{p}_{k+1}&=\alpha_{k+1}\left( -\nabla f_{S_{k+1}}(x^{k+1})+\frac{\beta_{k+1}}{\alpha_{k+1}}  \tilde{p}_{k}\right)\\
		&=\alpha_{k+1}\left( -\nabla f_{S_{k+1}}(x^{k+1})+ \frac{\langle \nabla f_{S_{k+1}}(x^{k+1}), \tilde{p}_{k} \rangle}{\|\tilde{p}_{k}\|_2^2} \tilde{p}_{k} \right).
	\end{aligned}$$
	We state that $p_k$ in \eqref{EF} satisfies $p_k=\frac{\tilde{p}_k}{\alpha_k}$. Indeed, let $\bar{p}_k=\frac{\tilde{p}_k}{\alpha_k}$, then  $\{\bar{p}_k\}_{k\geq0}$ has the following recursive relationship
	$$
	\bar{p}_{k+1}=-\nabla f_{S_{k+1}}(x^{k+1})+\frac{\langle \nabla f_{S_{k+1}}(x^{k+1}), \bar{p}_{k} \rangle}{\|\bar{p}_{k}\|_2^2} \bar{p}_{k}.
	$$
	Note that $\bar{p}_0=\frac{\tilde{p}_0}{\alpha_0}=\frac{x^1-x^0}{\alpha_0}=-\nabla f_{S_{0}}(x^{0})=p_0$. Hence for $i\geq1$, we also have $\bar{p}_i=p_i$. So we can obtain that $p_k=\frac{\tilde{p}_k}{\alpha_k}$, i.e. $$x^{k+1}=x^k+\alpha_k p_k.$$
	Next, we show that $\alpha_k =\delta_k$. If $k=0$, it is obvious that $\alpha_0=\delta_0$. We consider the case where $k\geq1$.
	Since $\alpha_kp_k=x^{k+1}-x^k\in \Pi_k$ and $x^{k+1}-A^\dagger b$ is orthogonal to $\Pi_k$, we know that $\langle x^{k+1}-A^{\dagger}b, \alpha_k  p_k\rangle=0$, which implies
	$$
	0=\langle x^{k+1}-x^k+x^k-A^{\dagger}b, \alpha_k  p_k\rangle=\alpha^2_k\|p_k\|^2_2+\alpha_k \langle x^k-A^{\dag}b, p_k\rangle.
	$$
	Hence $\alpha_k=\frac{\langle A^{\dagger}b-x^k, p_k \rangle}{\|p_k\|_2^2}$. Since $p_{k-1}$ is parallel to $x^{k}-x^{k-1}$, we have $\langle A^{\dagger}b-x^k, p_{k-1} \rangle=0$, which leads to
	$$
	\langle A^{\dag}b-x^k, p_k \rangle=\langle A^{\dagger}b-x^k, -\nabla f_{S_k}(x^k) \rangle=\|S_k^\top (Ax^k-b)\|_2^2.
	$$
	Therefore, we have
	$$
	\alpha_k=\frac{\langle A^{\dagger}b-x^k, p_k \rangle}{\|p_k\|_2^2}=\frac{\|S_k^\top (Ax^k-b)\|_2^2}{\|p_k\|_2^2}=\delta_k
	$$
	as desired. This completes the proof of this theorem.
\end{proof}

Based on the proof above, we know that for any $k\geq 0$, $\langle x^{k+1}-A^{\dag}b, p_k\rangle=0$.
In addition, we have the following result for iteration \eqref{EF}.

\begin{prop}
	\label{prop-0426}
	Suppose that $\{x^k\}_{k\geq0}$ and $\{p_k\}_{k \geq 0}$ are the sequences generated by \eqref{EF}. Let $r^k=Ax^k-b$. Then we have
	\begin{enumerate}
		\item[(\romannumeral1)] $\langle p_k ,p_{k+1}\rangle=0$;
		\item[(\romannumeral2)] $(r^{k+1})^\top S_k S_k^\top r^k=0$.
	\end{enumerate}
\end{prop}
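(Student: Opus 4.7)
\textbf{Proof proposal for Proposition \ref{prop-0426}.}

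For part (i), the plan is to substitute directly. By the last line of \eqref{EF},
$$\langle p_k, p_{k+1}\rangle = \langle p_k, -\nabla f_{S_{k+1}}(x^{k+1})\rangle + \eta_k \|p_k\|_2^2.$$
The definition $\eta_k = \langle \nabla f_{S_{k+1}}(x^{k+1}), p_k\rangle / \|p_k\|_2^2$ was engineered precisely to make the second term cancel the first, so (i) falls out in one line. Note that $p_k \neq 0$ throughout because of Proposition \ref{prop}; otherwise $\eta_k$ would not be defined.

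For part (ii), the idea is to expand $r^{k+1}$ using the update rule. From $x^{k+1}=x^k+\delta_k p_k$ we get $r^{k+1}=r^k+\delta_k A p_k$, hence
$$(r^{k+1})^\top S_k S_k^\top r^k = \|S_k^\top r^k\|_2^2 + \delta_k \langle p_k, A^\top S_k S_k^\top r^k\rangle = \|S_k^\top r^k\|_2^2 + \delta_k \langle p_k, \nabla f_{S_k}(x^k)\rangle.$$
The task then reduces to showing $\langle p_k, \nabla f_{S_k}(x^k)\rangle = -\|p_k\|_2^2$, after which the definition $\delta_k = \|S_k^\top r^k\|_2^2 / \|p_k\|_2^2$ makes the two terms cancel.

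To evaluate $\langle p_k, \nabla f_{S_k}(x^k)\rangle$, I would rearrange the recursion $p_k = -\nabla f_{S_k}(x^k) + \eta_{k-1} p_{k-1}$ to express $\nabla f_{S_k}(x^k) = -p_k + \eta_{k-1} p_{k-1}$, so that
$$\langle p_k, \nabla f_{S_k}(x^k)\rangle = -\|p_k\|_2^2 + \eta_{k-1}\langle p_k, p_{k-1}\rangle.$$
The second term vanishes by part (i) (with index shifted down). This handles $k\geq 1$; the base case $k=0$ is immediate since $p_0 = -\nabla f_{S_0}(x^0)$, which gives $\langle p_0, \nabla f_{S_0}(x^0)\rangle = -\|p_0\|_2^2$ directly.

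I do not anticipate a genuine obstacle here: both assertions are essentially algebraic identities baked into the recursion, and the main thing to keep straight is the order in which (i) and (ii) are invoked — specifically, that (ii) at step $k$ uses (i) at step $k-1$, so proving (i) first for all indices (or noting that the argument for (i) at step $k$ is self-contained) is the correct sequencing.
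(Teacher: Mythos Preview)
Your proposal is correct and matches the paper's proof essentially line for line: part (i) is the same one-line substitution, and part (ii) uses the same expansion $r^{k+1}=r^k+\delta_k Ap_k$ together with the recursion $p_k=-\nabla f_{S_k}(x^k)+\eta_{k-1}p_{k-1}$ and part (i) at index $k-1$. Your explicit treatment of the base case $k=0$ and the remark that $p_k\neq0$ via Proposition~\ref{prop} are slight refinements over the paper's presentation, but the argument is otherwise identical.
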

\begin{proof}
	$(\romannumeral1)$ By the  format of $p_{k+1}$ in \eqref{EF}, we can get
	$$
	\langle p_k, p_{k+1} \rangle= \langle p_k, - \nabla f_{S_{k+1}}(x^{k+1})+\eta_k p_k \rangle= -\langle p_k, \nabla f_{S_{k+1}}(x^{k+1})\rangle+\eta_k \|p_k\|_2^2=0.
	$$
	
	$(\romannumeral2)$ Since $r^{k+1}=Ax^{k+1}-b=Ax^k+\delta_k A p_k-b=r^k+\delta_k A p_k$, we have
	$$\begin{aligned}
		(r^{k+1})^\top S_k S_k^\top r^k
		&=\langle r^k+\delta_k A p_k, S_k S_k^\top r^k \rangle =\|S_k^\top r^k\|_2^2-\delta_k \langle p_k, -\nabla f_{S_k}(x^k) \rangle  \\
		&=\|S_k^\top r^k\|_2^2-\delta_k \langle p_k, p_k-\eta_{k-1} p_{k-1} \rangle  =\|S_k^\top r^k\|_2^2-\delta_k \|p_k\|_2^2
		\\
		&=0
	\end{aligned}
	$$
	as desired.
\end{proof}

\begin{remark}\label{remak-xie-0427}
	Based on Proposition \ref{prop-0426}, we can show that the parameter $\gamma_k$ in Theorem \ref{CRS-AmSGD} can be strictly larger than $0$ in some cases. For example, if the sample space $\Omega_k=\{I\}$ for all $k$, then $L_{\text{adap}}^{(k)}=\frac{\|r^k\|^2_2}{\|A^\top r^k\|^2_2}$ and
	$$
	\begin{aligned}
		u_k&=\langle A^\top r^k,x^k-x^{k-1}\rangle A^\top r^k-\|A^\top r^k\|^2_2(x^k-x^{k-1})\\
		&=\langle  r^k,r^k-r^{k-1}\rangle A^\top r^k-\|A^\top r^k\|^2_2(x^k-x^{k-1})\\
		&=\|r^k\|^2_2A^\top r^k-\|A^\top r^k\|^2_2(x^k-x^{k-1}),
	\end{aligned}
	$$
	where the last equality follows from Proposition \ref{prop-0426}.
	We have
	$$
	\begin{aligned}
		\langle \tilde{x}^{k+1}-A^\dagger b, u^k \rangle
		=&\left\langle x^k-A^\dagger b-\frac{\|r^k\|^2_2}{\|A^\top r^k\|^2_2} A^\top r^k,\|r^k\|^2_2A^\top r^k-\|A^\top r^k\|^2_2(x^k-x^{k-1})\right\rangle  \\
		=
		&- \|A^\top r^k\|^2_2\langle x^k-A^\dagger b,x^k-x^{k-1}\rangle
		+\|r^k\|^2_2\langle A^\top r^k,x^k-x^{k-1}\rangle\\
		=& \|r^k\|_2^4,
	\end{aligned}
	$$
	where the last equality follows from $\langle x^k-A^\dagger b,x^k-x^{k-1}\rangle=0$. Hence, we now have $\gamma_k=\cos^2 \theta_k>0$ as long as $Ax^k\neq  b$.
	We note  that a similar result can be obtained if we assume that the sample space $\Omega_k=\{S\}$ for all $k$, where $S$ is a fixed matrix such that $S^\top S$ is positive definite.
\end{remark}

\subsection{Connection to  the CG method}

Conjugate gradient (CG) method is one of the most well-known adaptive algorithms.
Originally introduced by Hestenes and Stiefel in 1952 \cite{hestenes1952methods} for solving linear systems, this method has gained popularity due to its strong theoretical guarantees, such as finite-time convergence. Ever since, numerous variants of CG method have been developed for a range of nonlinear problems, and we refer to \cite{hager2006survey}  for a nice survey  of them.
Consider the following equivalent problem
\begin{equation}
	\label{cg-xie-0427}
	AA^\top y= b, x=A^\top y
\end{equation}
of $Ax=b$. Starting with an arbitrary point $x^0\in\mathbb{R}^n$, $r^0=Ax^0-b$, and $p_0=-A^\top r^0$, the CG method for solving \eqref{cg-xie-0427} results in the following  \emph{conjugate gradient normal equation error} (CGNE) method \cite[Section 11.3.9]{golub2013matrix}
\begin{equation}
	\label{CG-method}
	\left\{
	\begin{array}{ll}
		\mu_k=\frac{\|r^k\|^2_2}{\|p_k\|^2_2},\\
		x^{k+1}=x^k+\mu_k p_k,  \\
		r^{k+1}=r^k+\mu_k Ap_k,  \\
		\tau_k=\frac{\|r^{k+1}\|^2_2}{\|r^k\|^2_2},
		\\
		p_{k+1}=-A^\top r^{k+1}+\tau_k p_k.
	\end{array}
	\right.
\end{equation}
To investigate the connection between CGNE and ASHBM, we present  another expression for the parameter $\eta_k$ in \eqref{EF}. Specifically, we have
$$
\begin{aligned}
	\eta_k&=\frac{\langle \nabla f_{S_{k+1}}(x^{k+1}), p_k \rangle}{\|p_k\|_2^2}=\frac{\langle A^\top S_{k+1}S^\top_{k+1}r^{k+1}, p_k \rangle}{\|p_k\|_2^2}=\frac{\langle S^\top_{k+1}r^{k+1}, S_{k+1}^\top A p_k \rangle}{\|p_k\|_2^2}\\
	&=\frac{\langle S^\top_{k+1}r^{k+1}, S_{k+1}^\top(r^{k+1}-r^k) \rangle}{\delta_k\|p_k\|_2^2}=\frac{\|S_{k+1}^\top r^{k+1}\|^2_2-\langle S^\top_{k+1}r^{k+1}, S_{k+1}^\top r^k \rangle}{\|S^\top_kr^k\|_2^2}.
\end{aligned}
$$
Therefore, if we assume that the sample spaces $\Omega_k=\{I\}$ for all $k$,
Proposition \ref{prop-0426} implies that now the parameter  $\eta_k = \frac{\|r^{k+1}\|^2_2}{\|r^k\|^2_2}$. It is evident that \eqref{EF} and \eqref{CG-method} are now equivalent, indicating that CGNE is a special case of ASHBM.
Moreover, if we assume that the sample spaces $\Omega_k$ are a singleton set $\{S\}$ for all $k$, where $S$ is a fixed matrix satisfying $S^\top S$ is positive definite, then ASHBM can be interpreted as a preconditioned CGNE method, where the preconditioner is the matrix $S$.

The above discussion indicates that the ASHBM approach can be leveraged to establish a novel stochastic conjugate gradient (SCG) method.
The derived SCG method is presented in Algorithm \ref{SCG-xie}.
To the best of our knowledge, although there have already been work  on the SCG methods in \cite{Jin2019-dr,Schraudolph2004-zh,Yang2022-ie}, such a SCG procedure has not been investigated.

\begin{algorithm}[htpb]
	\caption{Stochastic conjugate gradient(SCG)}
	\label{SCG-xie}
	\begin{algorithmic}
		\STATE{\textbf{Input:} $A \in \mathbb{R}^{m \times n}$, $b \in \mathbb{R}^m$, probability spaces $\{(\Omega_k, \mathcal{F}_k, P_k)\}_{k\geq 0}$, $k=0$ and the initial point $x^0 \in \text{Range}(A^\top)$.}
		\begin{enumerate}
			\item[1:] Randomly select a sampling matrix $S_0 \in \Omega_0$ until $S_0^\top (Ax^0-b)\neq 0$.
			\item[2:] Set $p_0=-A^\top S_0S_0^\top (Ax^0-b)$.
			\item[3:] Set $\delta_k =\|S_k^\top (Ax^k-b)\|_2^2 / \|p_k\|_2^2$.
			\item[4:] Update $x^{k+1}=x^k+\delta_k p_k$.
			\item[5:] Randomly select a sampling matrix $S_{k+1} \in \Omega_{k+1}$ until $S_{k+1}^\top (Ax^{k+1}-b)\neq 0$.
			\item[6:] Compute
			$$
			\begin{aligned}
				\eta_k&=\frac{\|S_{k+1}^\top (Ax^{k+1}-b)\|^2_2-\langle S^\top_{k+1}(Ax^{k+1}-b), S_{k+1}^\top (Ax^{k}-b) \rangle}{\|S^\top_k(Ax^{k}-b)\|_2^2},\\
				p_{k+1}&= - A^\top S_{k+1}S_{k+1}^\top (Ax^{k+1}-b)+\eta_k p_k.
			\end{aligned}
			$$
			\item[7:] If the stopping rule is satisfied, stop and go to output. Otherwise, set $k=k+1$ and return to Step $3$.
		\end{enumerate}
		\STATE{\textbf{Output:} The approximate solution $x^k$.}
	\end{algorithmic}
\end{algorithm}

\section{Numerical experiments}

In this section, we implement the ASHBM method (Algorithm \ref{ASHBM}) and the modified basic method (Algorithm \ref{ibm-xie}).
We also compare our algorithms with the momentum variant of the basic method with time-invariant parameters proposed in \cite{loizou2020momentum,han2022pseudoinverse}
and the   {\sc Matlab} functions \texttt{pinv} and \texttt{lsqminnorm}.

All the methods are implemented in  {\sc Matlab} R2022a for Windows $11$ on a desktop PC with Intel(R) Core(TM) i7-1360P CPU @ 2.20GHz  and 32 GB memory. The code to reproduce our results can be found at \href{https://github.com/xiejx-math/ASHBM-codes.git}{https://github.com/xiejx-math/ASHBM-codes.git}.

\subsection{ Numerical setup}
\label{NS-Section61}

We examine the following two commonly used  probability spaces \cite{Nec19,necoara2022stochastic,Du20Ran}.

{\bf Uniform sampling:} We consider the uniform sampling of $p$ unique indices that make up the set $\mathcal{J}$, i.e. $\mathcal{J}\subset [m]$ and $|\mathcal{J}| = p$ for all samplings, with $p$ being the block size. Then, we set $S = \sqrt{m/p}I_{:,\mathcal{J}}/\|A\|_F$. It can be observed that the total number of possible choices  is  $m\choose p$ and $P(\mathcal{J})=1/{m\choose p}$ for any $\mathcal{J}$. In this case, Algorithm \ref{ibm-xie} and ASHBM (Algorithm \ref{ASHBM}) yield the \emph{randomized block Kaczmarz} method with \emph{uniform rule} (RBKU) and the RBKU with \emph{adaptive momentum} (AmRBKU), respectively. The term  $\sigma_{\min}^{2} (H_k^{\frac{1}{2}}A)/\lambda_{\max}^{(k)}$ in Theorems \ref{CRS-ASGD} and  \ref{CRS-AmSGD} here becomes
\begin{equation}\label{upperboundU}
	\frac{\sigma_{\min}^{2} (H_k^{\frac{1}{2}}A)}{\lambda_{\max}^{(k)}}=\frac{p}{m}\cdot\frac{\sigma_{\min}^2(A)}{\max\limits_{\mathcal{J}\subset[m],|\mathcal{J}|=p}\|A_{\mathcal{J},:}\|^2_2}.
\end{equation}

{\bf Partition sampling:} Consider the following partition of $[m]$
$$	
\begin{aligned}
	\mathcal{I}_i&=\left\{\varpi(k): k=(i-1)p+1,(i-1)p+2,\ldots,ip\right\}, i=1, 2, \ldots, t-1,
	\\
	\mathcal{I}_t&=\left\{\varpi(k): k=(t-1)p+1,(t-1)p+2,\ldots,m\right\}, |\mathcal{I}_t|\leq p,
\end{aligned}$$
where $\varpi$ is a uniform random permutation on $[m]$ and $p$ is the block size. 
We select an index $ i \in [t] $ with a probability of $\|A_{\mathcal{I}_i,:}\|^2_F/\|A\|^2_F$, and then set $S=I_{:,\mathcal{I}_i}/\|A_{\mathcal{I}_i,:}\|_F$. 
In this case, Algorithm \ref{ibm-xie} and ASHBM (Algorithm \ref{ASHBM}) yield the randomized average block Kaczmarz (RABK) method \cite{Nec19,Du20Ran} and  the RABK with \emph{adaptive momentum} (AmRABK), respectively.
The term  $\sigma_{\min}^{2} (H_k^{\frac{1}{2}}A)/\lambda_{\max}^{(k)}$ in Theorems \ref{CRS-ASGD} and  \ref{CRS-AmSGD} here writes as
\begin{equation}\label{upperboundP}
	\frac{\sigma_{\min}^{2} (H_k^{\frac{1}{2}}A)}{\lambda_{\max}^{(k)}}=\frac{\sigma_{\min}^2(A)}{\|A\|^2_F\cdot\max_{i\in[t]} \frac{\|A_{\mathcal{I}_i,:}\|^2_2}{\|A_{\mathcal{I}_i,:}\|^2_F}}.
\end{equation}

We consider two types of coefficient matrices. One is randomly generated Gaussian matrices by using the {\sc Matlab} function {\tt randn}. Specifically, for given $m, n, r$, and $\kappa>1$, we construct a dense matrix $A$ by $A=U D V^\top$, where $U \in \mathbb{R}^{m \times r}, D \in \mathbb{R}^{r \times r}$, and $V \in \mathbb{R}^{n \times r}$. Using {\sc Matlab}  notation, these matrices are generated by {\tt [U,$\sim$]=qr(randn(m,r),0)}, {\tt [V,$\sim$]=qr(randn(n,r),0)}, and {\tt D=diag(1+($\kappa$-1).*rand(r,1))}. So the condition number and the rank of $A$ are upper bounded by $\kappa$ and $r$, respectively. The other is  real-world data which are available via SuiteSparse Matrix Collection \cite{Kol19} and LIBSVM \cite{chang2011libsvm}, where only the coefficient matrices $A$ are employed in the experiments.

In our implementations, to ensure the consistency of the linear system, we first generate the solution by $x^*={\tt randn(n,1)}$ and then set $b=Ax^*$. All computations are initialized with $x^0=0$. 
The computations are terminated once the relative solution error (RSE), defined as
$\text{RSE}=\|x^k-A^\dagger b\|^2_2/\|x^0-A^\dagger b\|^2_2$, is less than a specific error tolerance or the number of iterations exceeds a certain limit. 
In practice, we consider $\|S_k^\top (Ax^k-b)\|_2$ as zero when it is less than  \texttt{eps}.
In our test, we set $\zeta_k=1$ for both RABK and RBKU (see Algorithm \ref{ibm-xie}).
For each experiment, we run $50$ independent trials.

\subsection{Choice of the block size $p$} 

The optimal block size selection for the RABK method has been analyzed by Necoara, assuming that the matrix $A$ is normalized, i.e., $\|A_{i,:}\|_2=1$ for all $i\in[m]$. Necoara estimated the term $\max_{i\in[t]} \frac{\|A_{\mathcal{I}_i,:}\|^2_2}{\|A_{\mathcal{I}_i,:}\|^2_F}$ in \eqref{upperboundP} and demonstrated that the optimal block size is approximately $p^* \backsimeq m/\|A\|^2_2$, see \cite[Section 4.1]{necoara2022stochastic} and \cite[Section 4.3]{Nec19}. 

In this subsection, we investigate the impact of the block size $p$ on the convergence of RBKU, AmRBKU, RABK, and AmRABK for general matrices through numerical experiments, particularly for randomly generated Gaussian matrices.
We present three different scenarios: well-conditioned, relatively ill-conditioned, and rank-deficient coefficient matrices. 
The performance of the algorithms is measured in both the computing time (CPU) and 
the number of full iterations $(k\cdot\frac{p}{m})$, which makes the number of operations for one pass through the rows of $A$ are the same for all the algorithms. 
The results are displayed in Figure \ref{figureR1}. 
The bold line illustrates the median value derived from $50$ trials. The lightly shaded area signifies the  range from the minimum to the maximum values, while the darker shaded one indicates the data lying between the $25$-th and $75$-th quantiles.  

\begin{figure}[tbhp]
	\centering
	\begin{tabular}{cc}
		\includegraphics[width=0.31\linewidth]{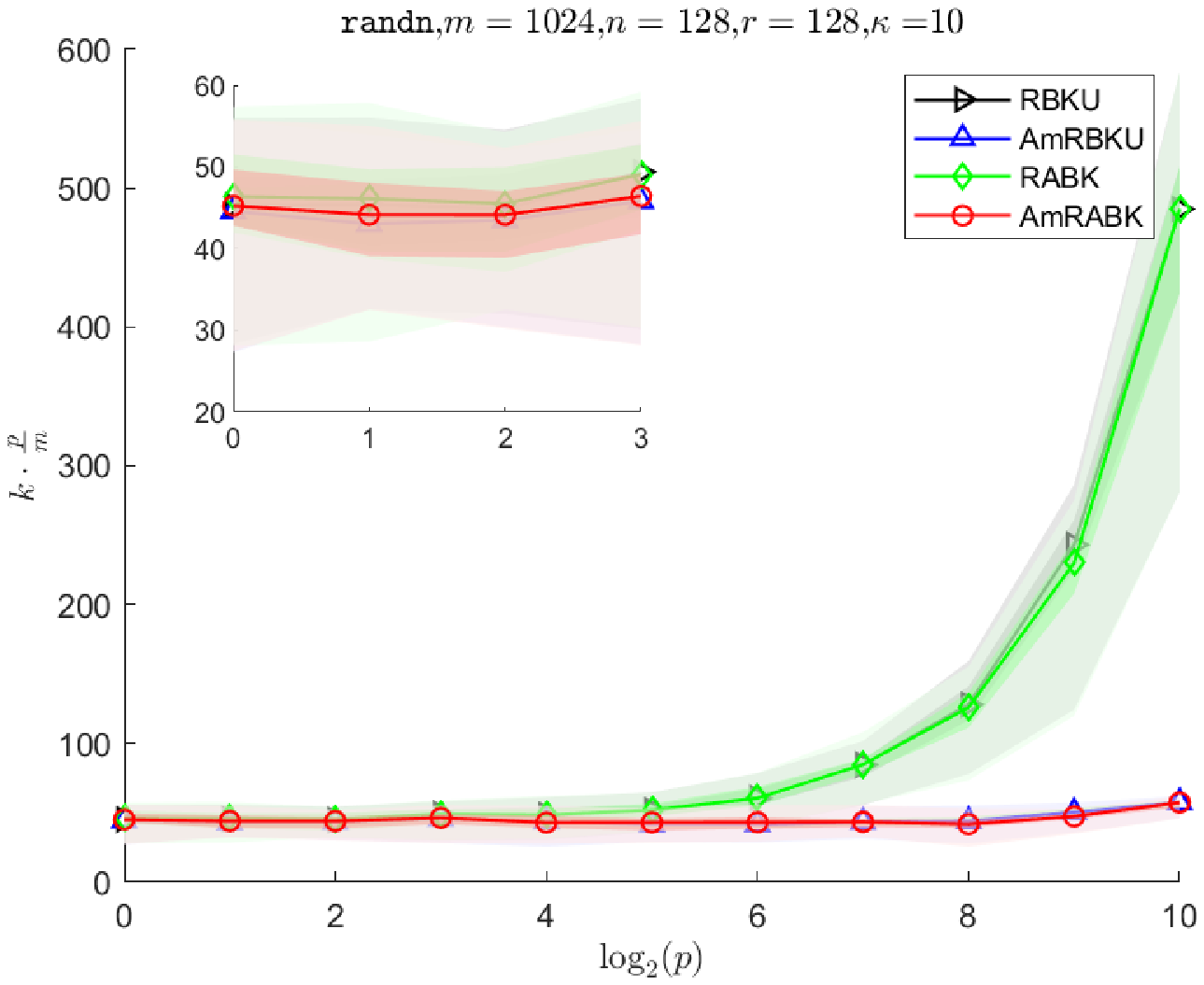}
		\includegraphics[width=0.31\linewidth]{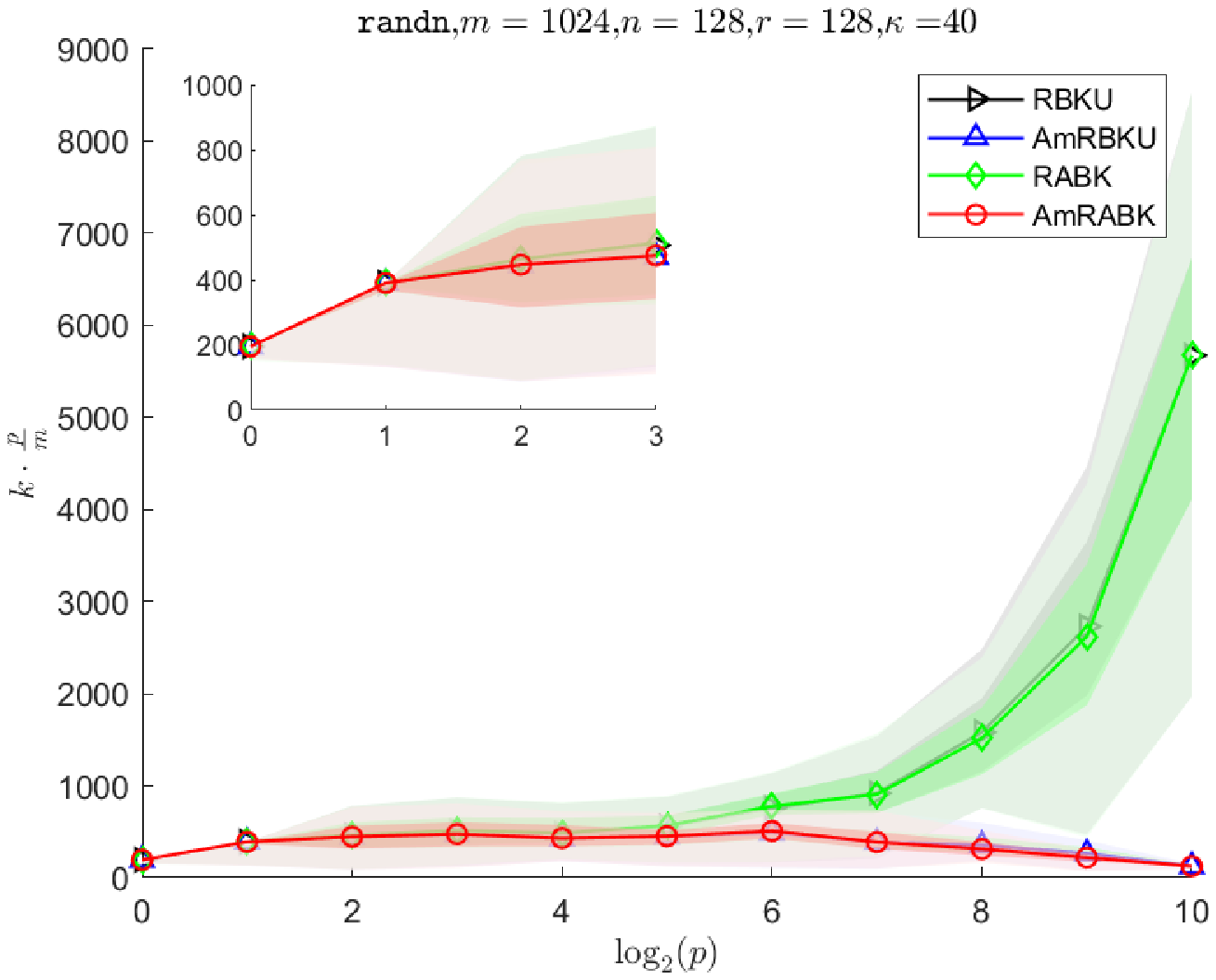}
		\includegraphics[width=0.31\linewidth]{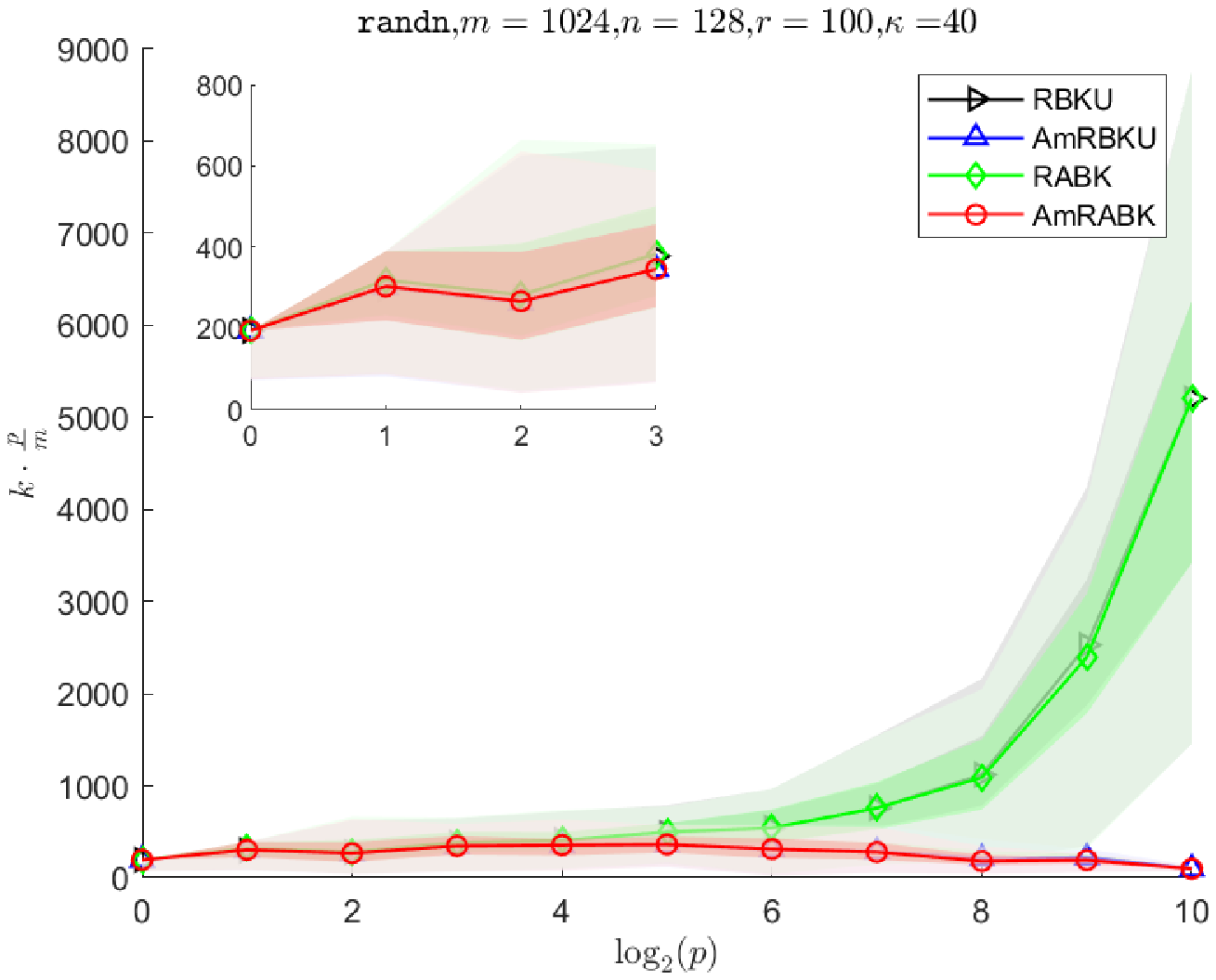}\\
		\includegraphics[width=0.31\linewidth]{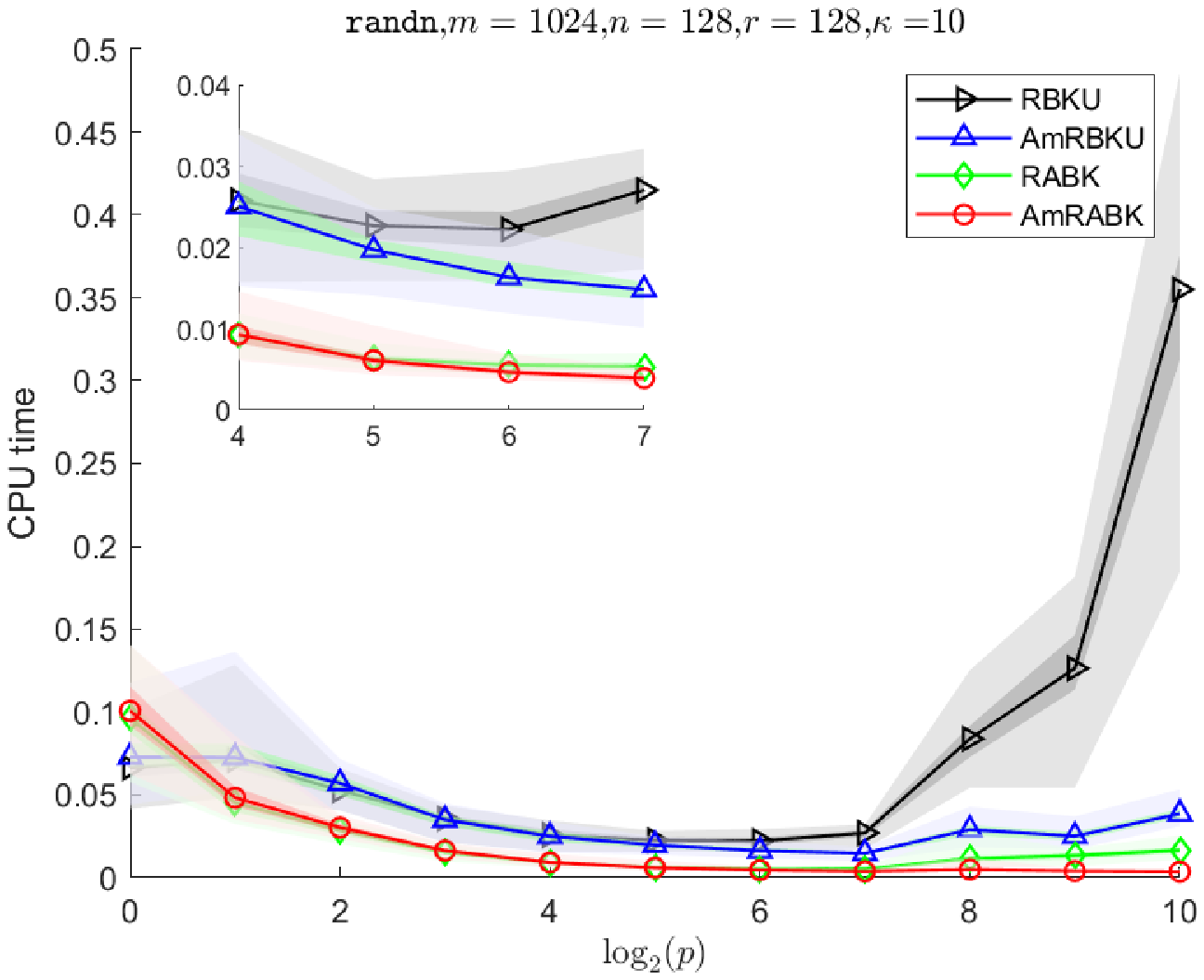}
		\includegraphics[width=0.31\linewidth]{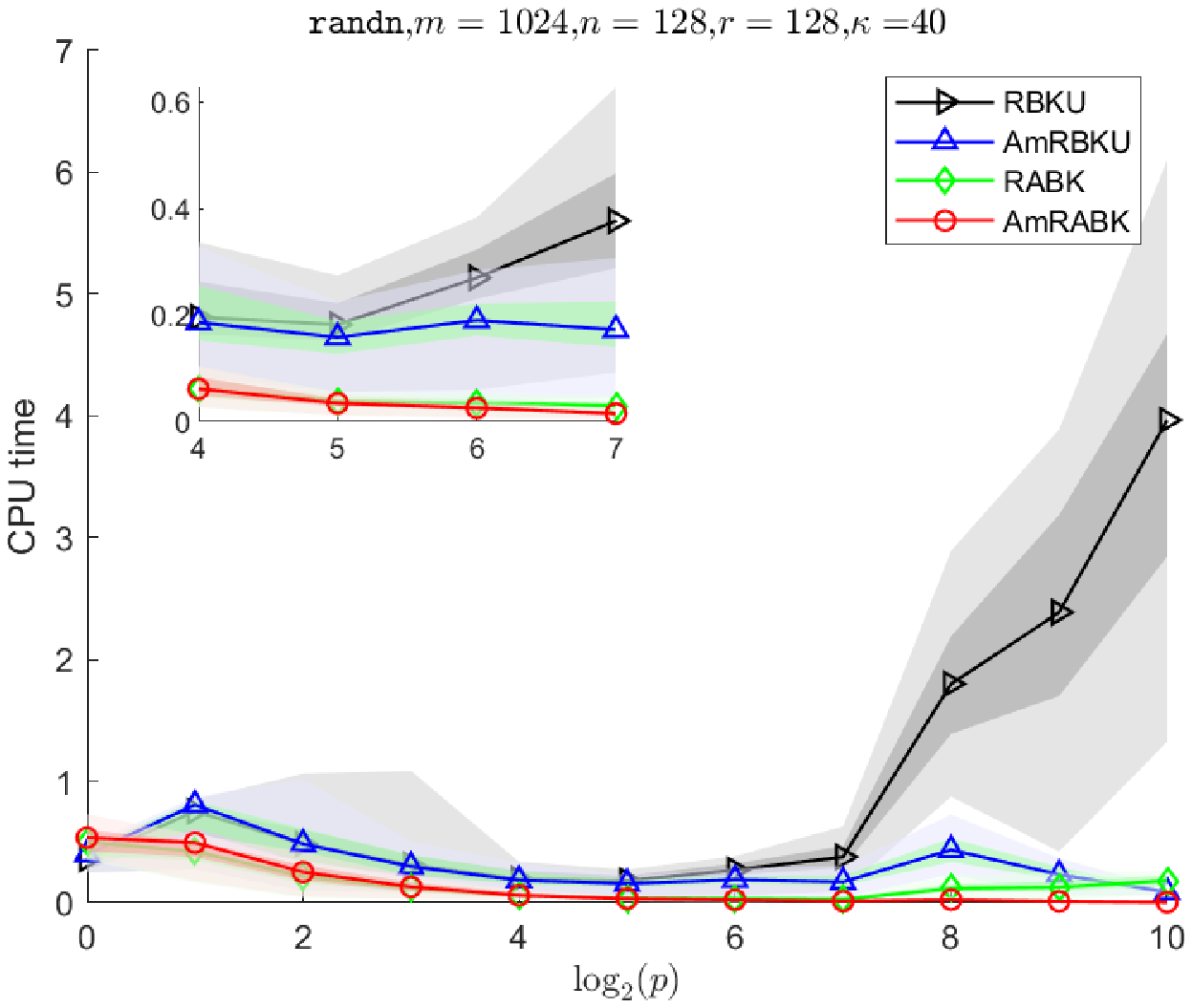}
		\includegraphics[width=0.31\linewidth]{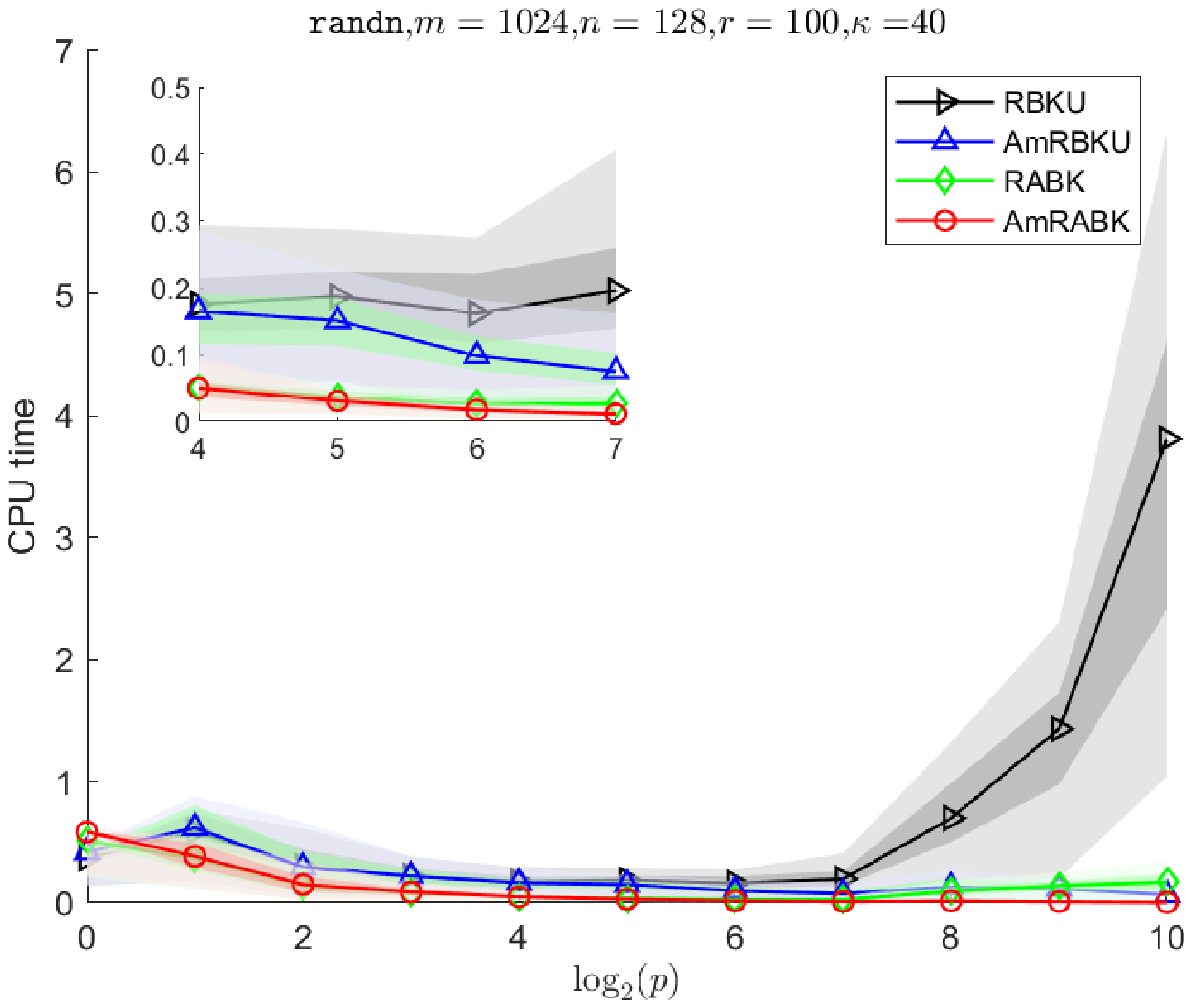}
	\end{tabular}
	\caption{
		Figures depict the evolution of the number of full iterations (top) and the CPU time (bottom) with respect to the block size $p$.  The title of each plot indicates the values of $m,n,r$, and $\kappa$. All computations are terminated once $\text{RSE}<10^{-12}$. }
	\label{figureR1}
\end{figure}

It can be seen from Figure \ref{figureR1} that for any fixed value of $p$, both RBKU and RABK demonstrate a similar number of full iterations, which is also the case for AmRBKU and  AmRABK. However,  AmRBKU and AmRABK generally require fewer full iterations than RBKU  and  RABK, especially when the block size is large.
When regarding CPU time, AmRABK requires less computation than AmRBKU  if $p\geq2$, which is consistent with the trend between RABK and RBKU.
This can be attributed to the increasing time consumption of row extraction from matrix $A$ for both RBKU and AmRBKU, as the value of $p$ increases. However,  RABK and AmRABK can overcome this issue by storing the submatrices of $A$ determined by the partition at the beginning, eliminating the need for subsequent row extraction from matrix $A$. 

Figure \ref{figureR1} also presents that the values of $p$ significantly affect the number of full iterations of both  RBKU  and  RABK.  
Conversely, the effect of $p$ on the convergence of AmRBKU and AmRABK is relatively negligible.
To provide a more clear visualization, we
depict the actual convergence factor of  RABK  and  AmRABK  in Figure \ref{figureR2}. Here, the actual convergence factor is defined as
$$\rho = \left(\frac{\|x^K-A^\dagger b\|^2_2}{\|x^0-A^\dagger b\|^2_2}\right)^{1/K},$$
where $K$ is the number of iterations when the algorithm terminates.
In Figure \ref{figureR2}, we also include the plot of the upper bound
\begin{equation}
	\label{upper-bound}
	1-\frac{\sigma_{\min}^2(A)}{\|A\|^2_F\cdot\max_{i\in[t]} \frac{\|A_{\mathcal{I}_i,:}\|^2_2}{\|A_{\mathcal{I}_i,:}\|^2_F}}
\end{equation}
derived from  \eqref{upperboundP} for  both  RABK  and AmRABK.  Note that the actual convergence factor of  RBKU and AmRBKU is not plotted due to the computational impracticality of obtaining the upper bound derived from \eqref{upperboundU}.
As seen in Figure \ref{figureR2}, both  RABK and AmRABK display smaller convergence factors than that indicated by the upper bound \eqref{upper-bound}. In addition, the actual convergence factor of AmRABK  is less than that of  RABK, and as $p$ increases, it decreases more significantly. 

\begin{figure}[tbhp]
	\centering
	\begin{tabular}{cc}
		\includegraphics[width=0.31\linewidth]{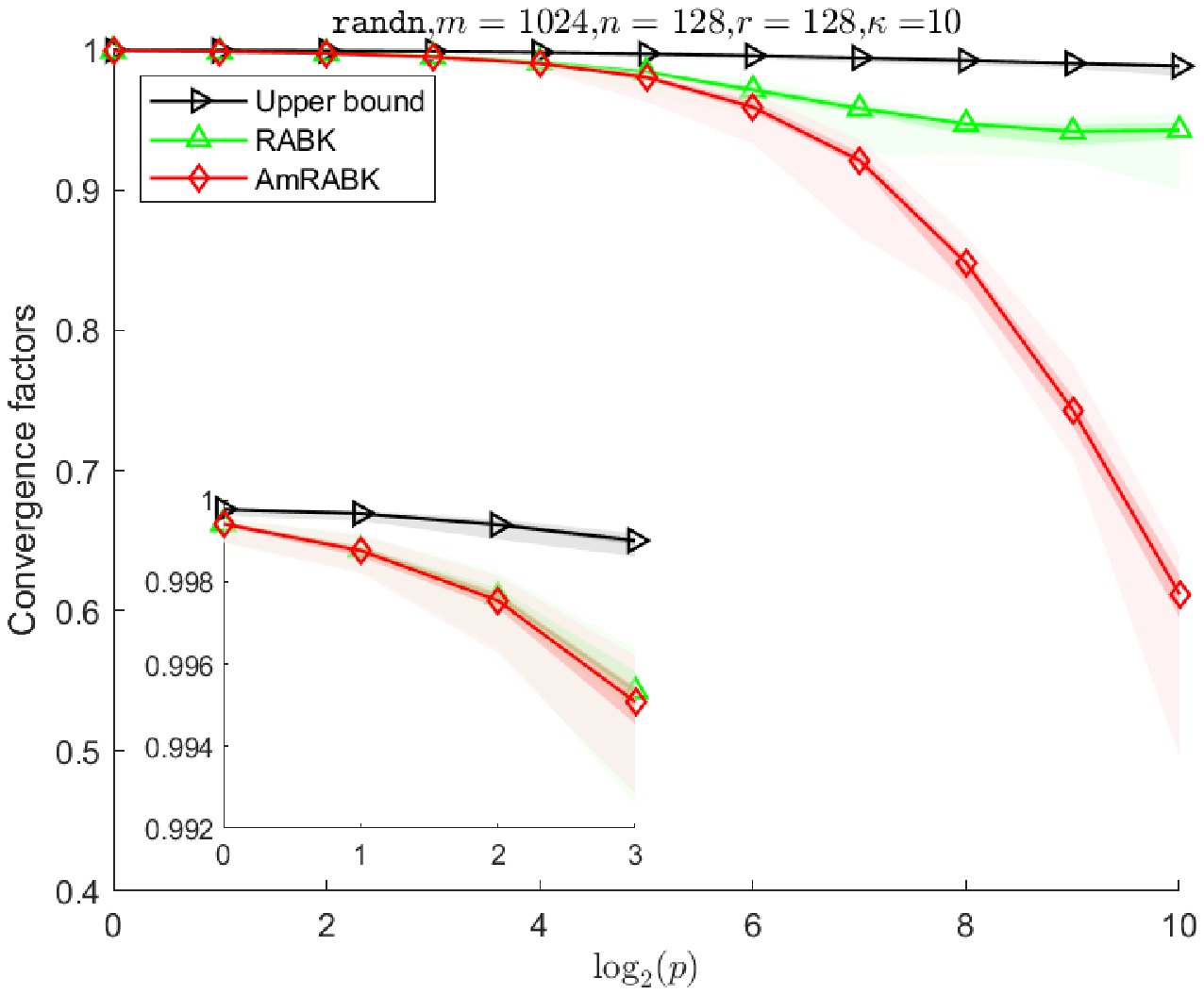}
		\includegraphics[width=0.31\linewidth]{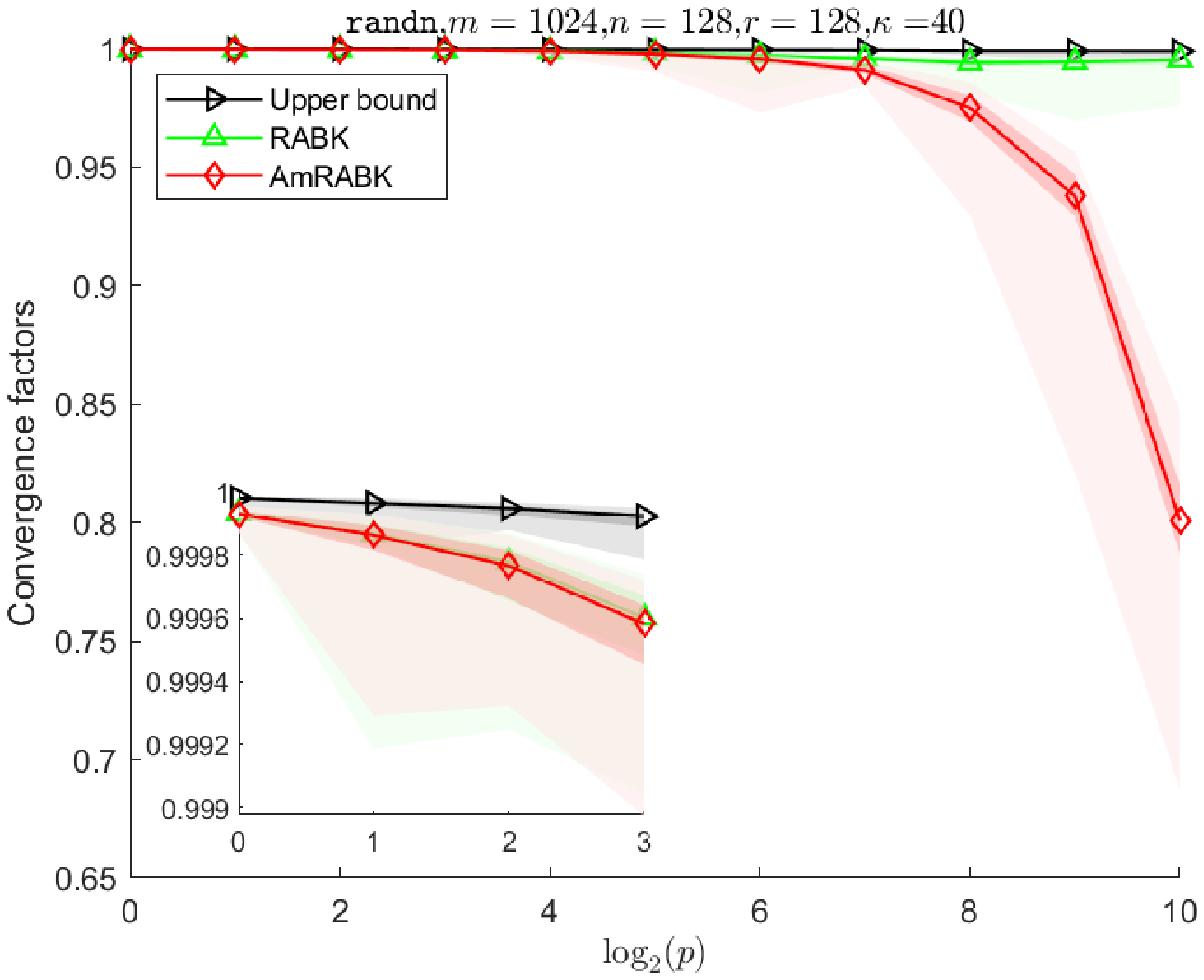}
		\includegraphics[width=0.31\linewidth]{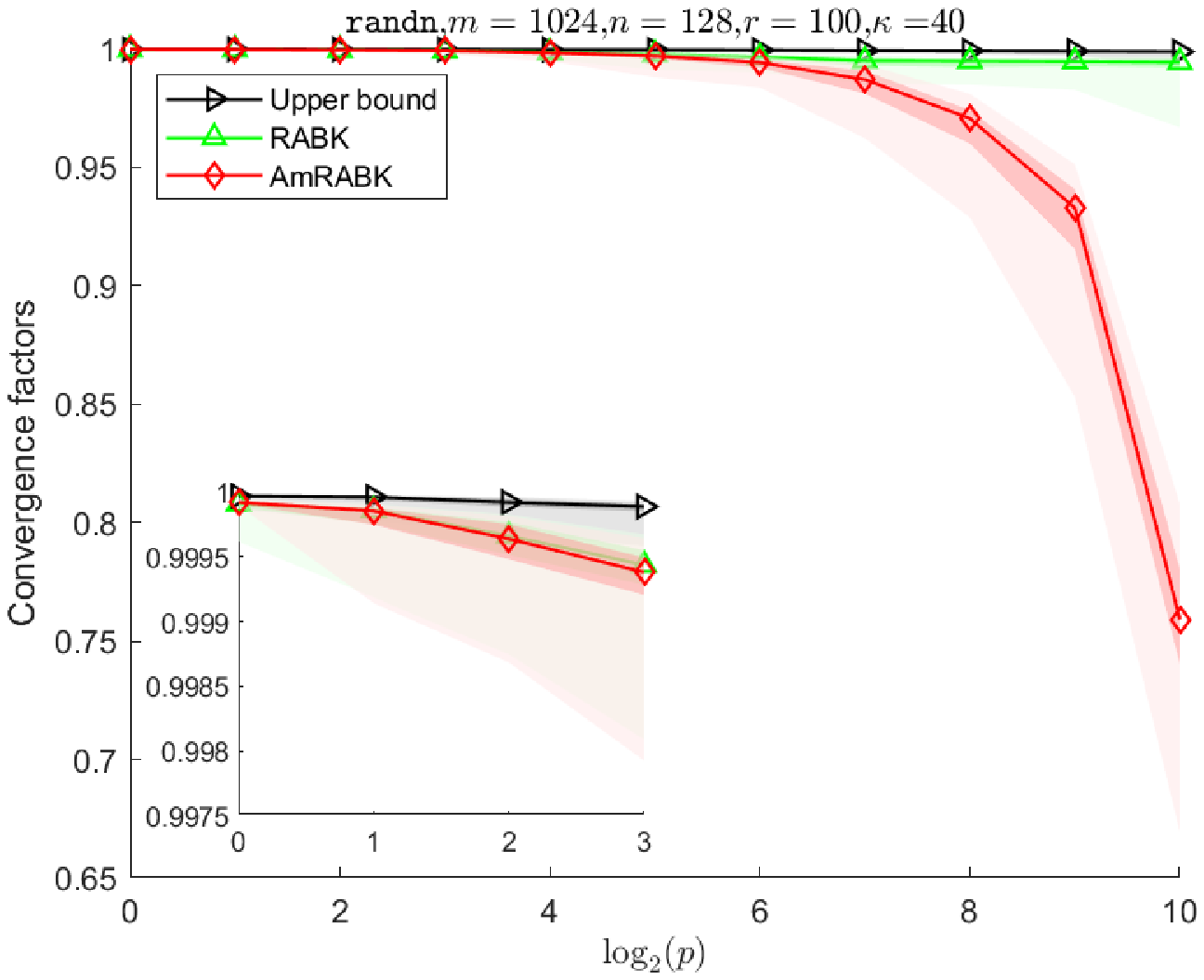}
	\end{tabular}
	\caption{Figures depict the evolution of the convergence factor with respect to the block size $p$.  The title of each plot indicates the values of $m,n,r$, and $\kappa$. All computations are terminated once $\text{RSE}<10^{-12}$.}
	\label{figureR2}
\end{figure}

To evaluate the performance improvement via the actual convergence factor, in Figure \ref{figureR3}, we present the the evolution of the RSE over the number of iterations for RABK and AmRABK, as well as the worst-case convergence bound derived from \eqref{upper-bound}.
Evidently, both RABK and AmRABK converge significantly faster than the worst-case convergence suggested by the upper bound.

Finally, we note that during our test, it has been consistently observed that the method derived from partition sampling surpasses the method derived from uniform sampling in terms of CPU time. Consequently, our subsequent tests will solely focus on the method derived from partition sampling.

\begin{figure}[tbhp]
	\centering
	\begin{tabular}{cc}
		\includegraphics[width=0.31\linewidth]{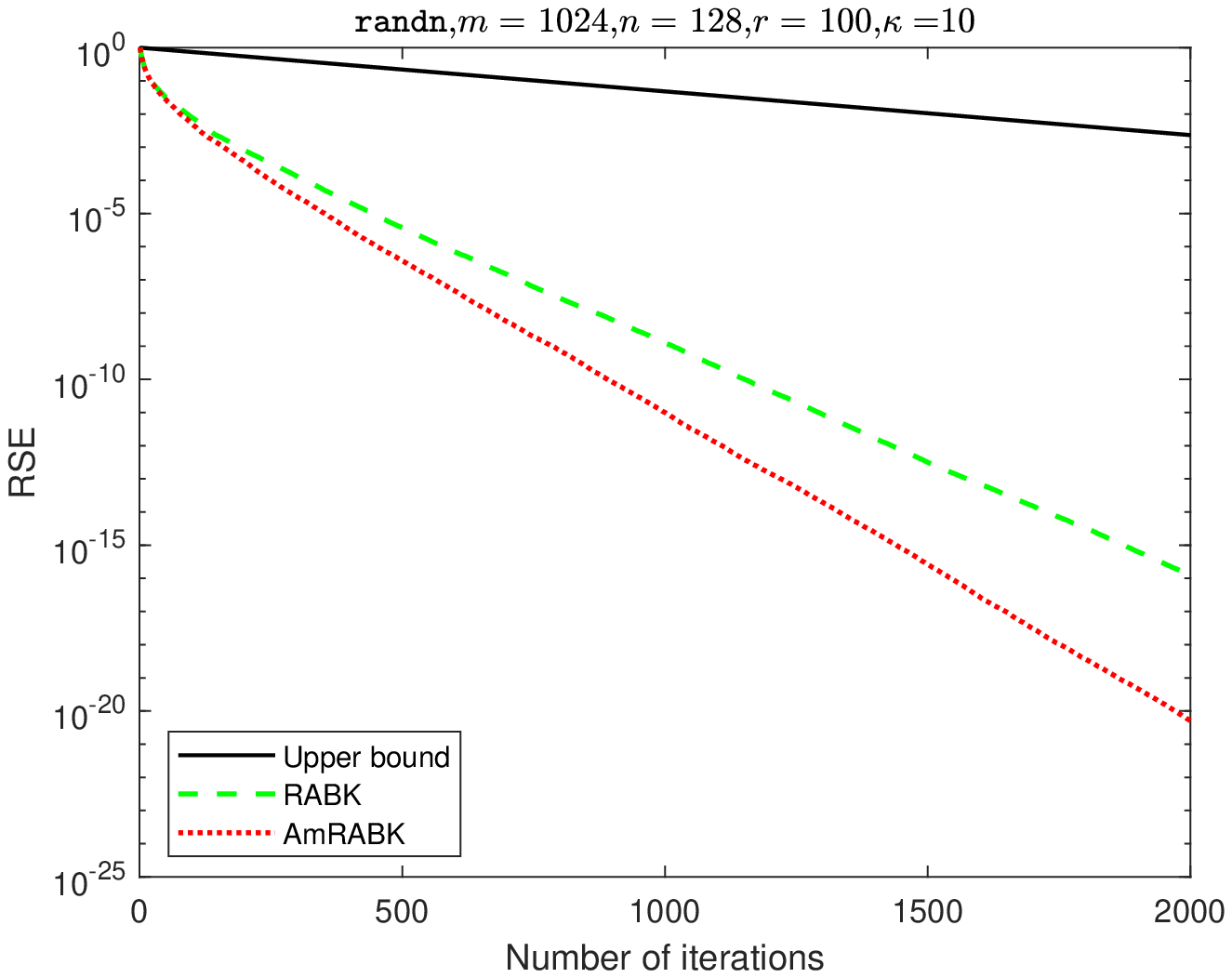}
		\includegraphics[width=0.31\linewidth]{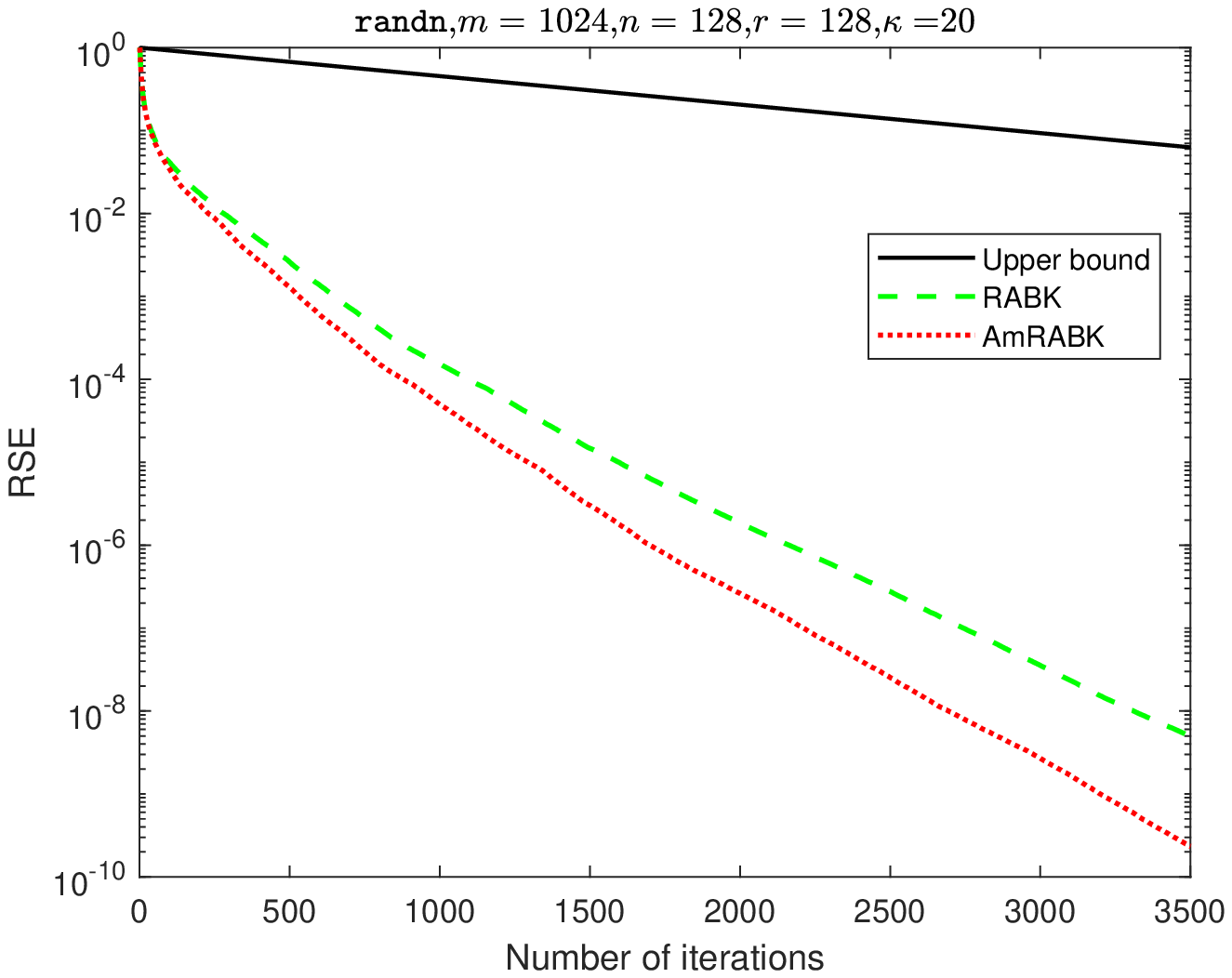}
		\includegraphics[width=0.31\linewidth]{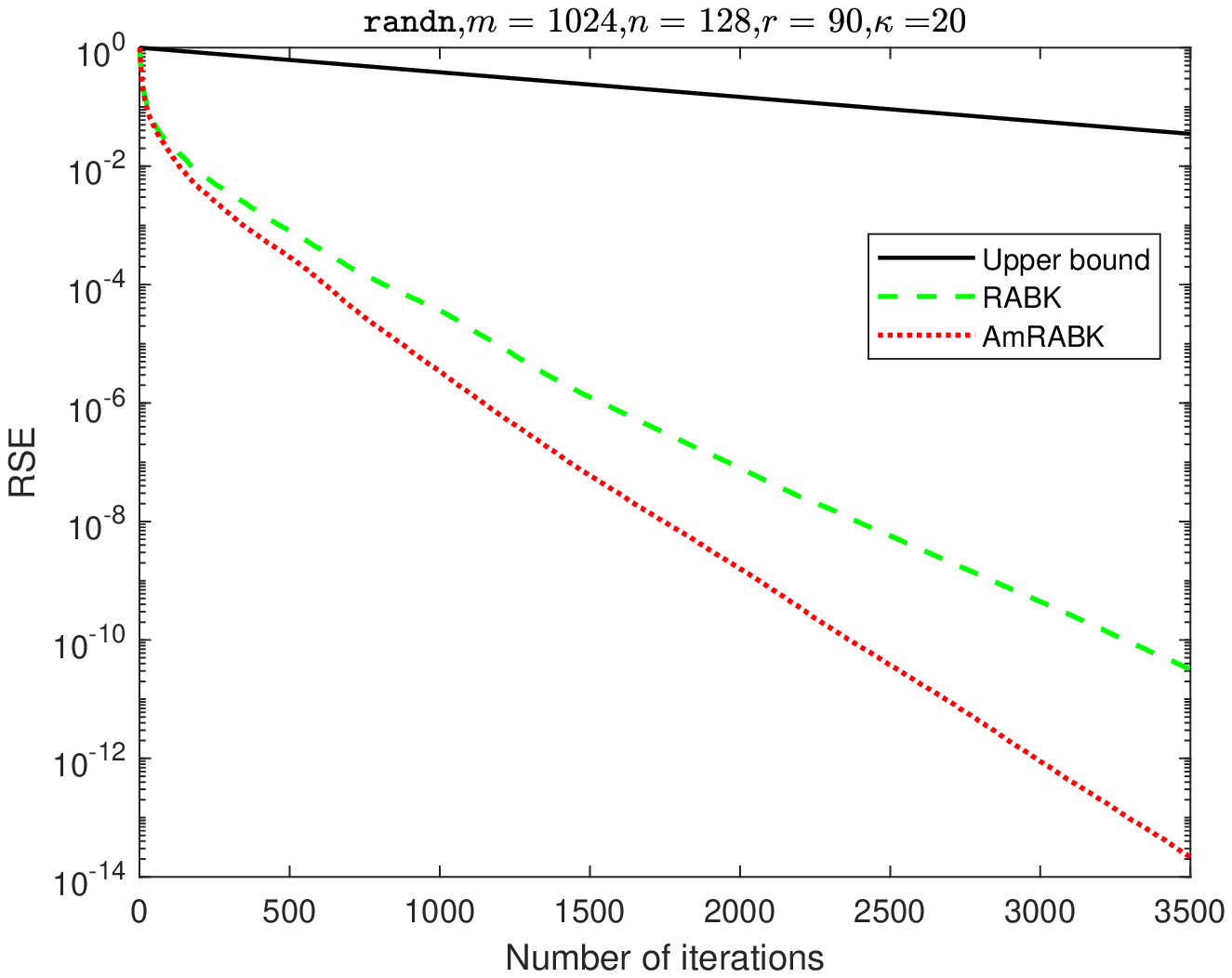}
	\end{tabular}
	\caption{
		The evolution of RSE with respect to the number of iterations.  The title of each plot indicates the values of $m,n,r$, and $\kappa$.  We set  $p=32$ and stop the algorithms if the number of iterations exceeds a certain limit.  The solid curve shows the upper bound \eqref{upper-bound}.}
	\label{figureR3}
\end{figure}

\subsection{Comparison to the method with time-invariant parameters}
In this subsection, we compare the performance of RABK and AmRABK with the method proposed in \cite{loizou2020momentum,han2022pseudoinverse}, which employs time-invariant parameters $\alpha$ and $\beta$.
The method proposed in \cite{loizou2020momentum,han2022pseudoinverse} with partition sampling now
leads to the RABK with momentum (mRABK). According to \cite[Theorem 4.7]{han2022pseudoinverse}, the step-size $\alpha$ for mRABK  can be chosen in $\left(0,\frac{2}{\tau\|A\|^2_F}\right)$, where  
\begin{equation}
	\label{step-size-tau}
	\tau=\left\|\mathbb{E}\left[\frac{I_{:,\mathcal{I}} I_{:,\mathcal{I}}^\top AA^\top I_{:,\mathcal{I}} I_{:,\mathcal{I}}^\top}{\|A_{\mathcal{I},:}\|^4_F}\right]\right\|_2
	=\frac{1}{\|A\|^2_F}\cdot\max_{i\in[t]} \frac{\|A_{\mathcal{I}_i,:}\|^2_2}{\|A_{\mathcal{I}_i,:}\|^2_F}.
\end{equation}
We note that this step-size for  mRABK is aligns with the step-size proposed in \cite[Lemma 2.6]{Du20Ran}. 
In addition, the selection of appropriate momentum parameters $\beta$ is also 
crucial for achieving efficient convergence of  mRABK.
In our test, we will utilize specific choice of $\alpha = \frac{1}{\tau\|A\|^2_F}$ and $\beta$ is obtained through numerical experiments that can achieve faster convergence.

We examine the performance of RABK, mRABK, and AmRABK in terms of number of iterations and CPU time across varying condition numbers of $A$. Figure \ref{figure1} presents the results for the case where $A$ is full rank while Figure \ref{figure2} illustrates the case where it is rank-deficient.
The results demonstrate that AmRABK  outperforms  RABK  in terms of both iteration counts and CPU time, regardless of the condition number of $A$ and whether it is full rank.
It can also be observed that both RABK and  AmRABK  achieve better performance than  mRABK in terms of CPU times, even though mRABK requires fewer iterations than that of RABK. This is because mRABK involves estimation of the step-size ($\tau$ in \eqref{step-size-tau}) at each step.

\begin{figure}[tbhp]
	\centering
	\begin{tabular}{cc}
		\includegraphics[width=0.31\linewidth]{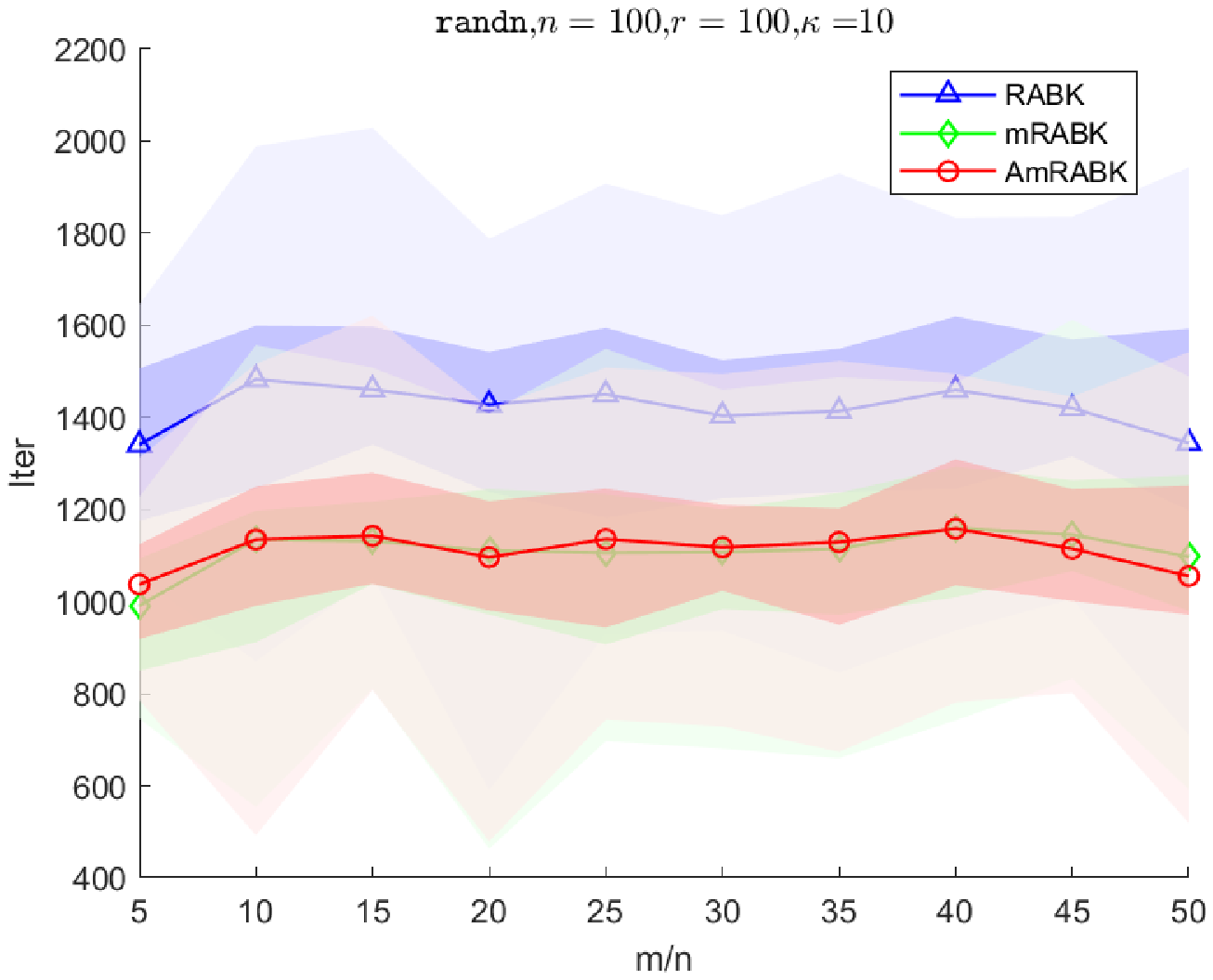}
		\includegraphics[width=0.31\linewidth]{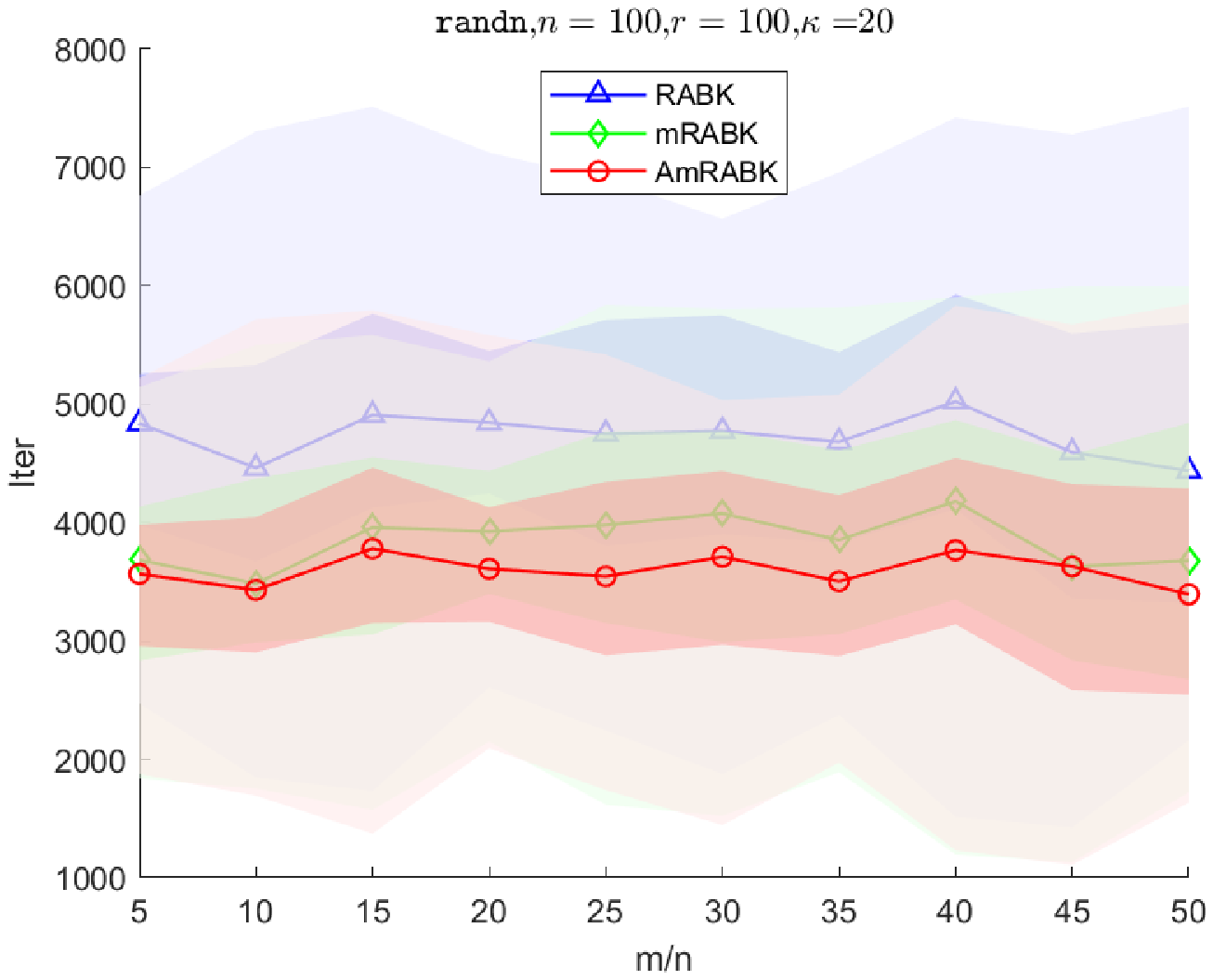}
		\includegraphics[width=0.31\linewidth]{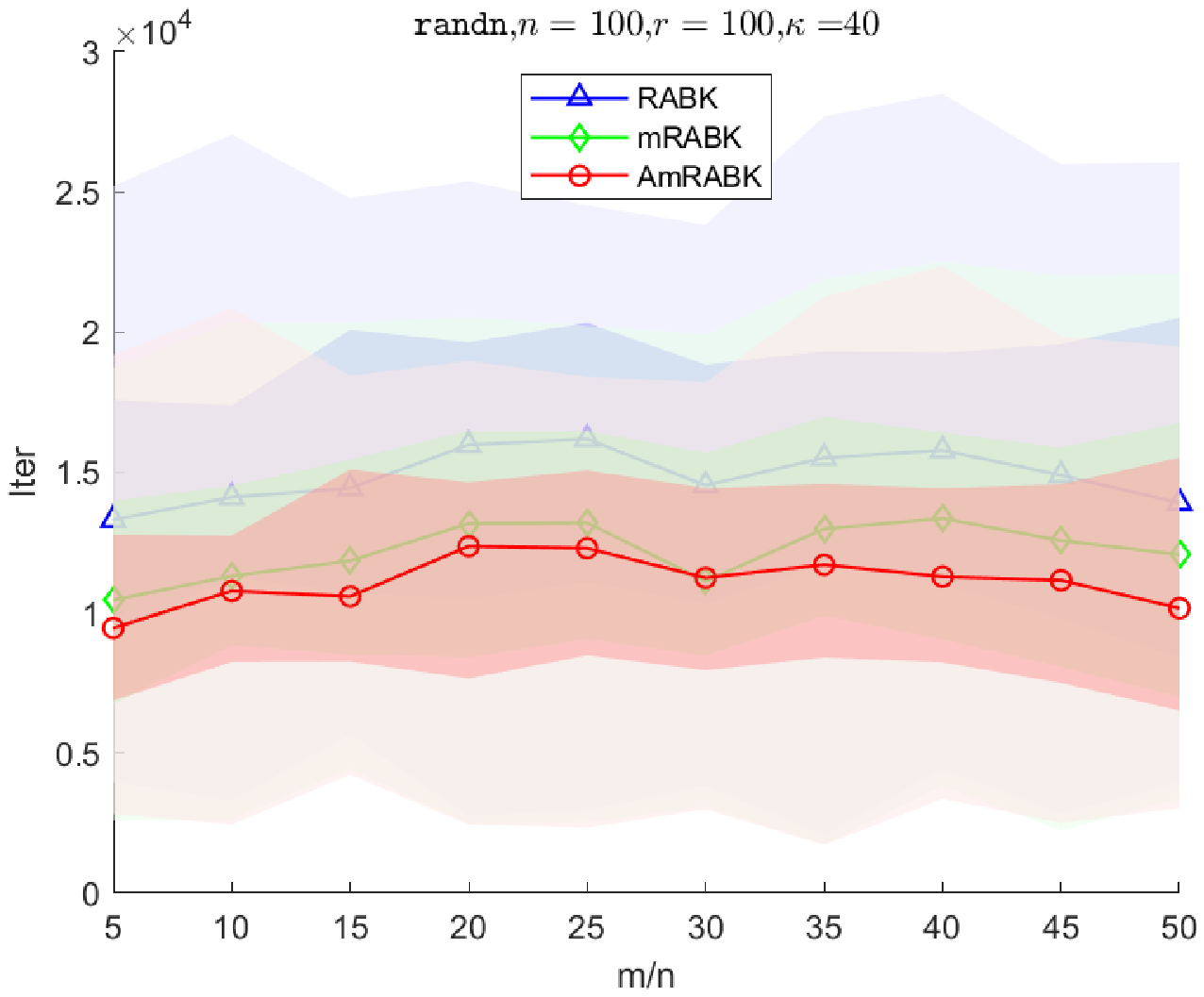}\\
		\includegraphics[width=0.31\linewidth]{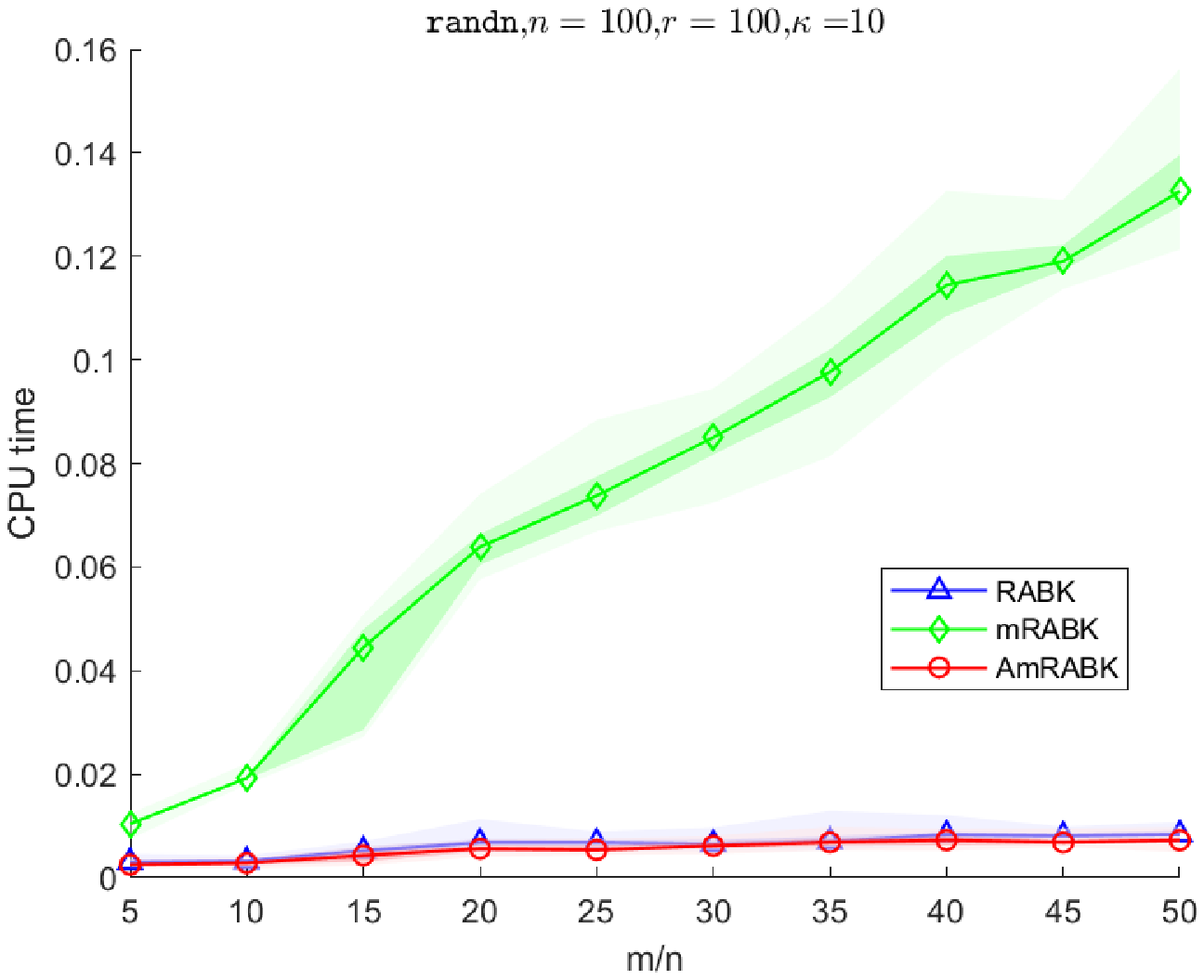}
		\includegraphics[width=0.31\linewidth]{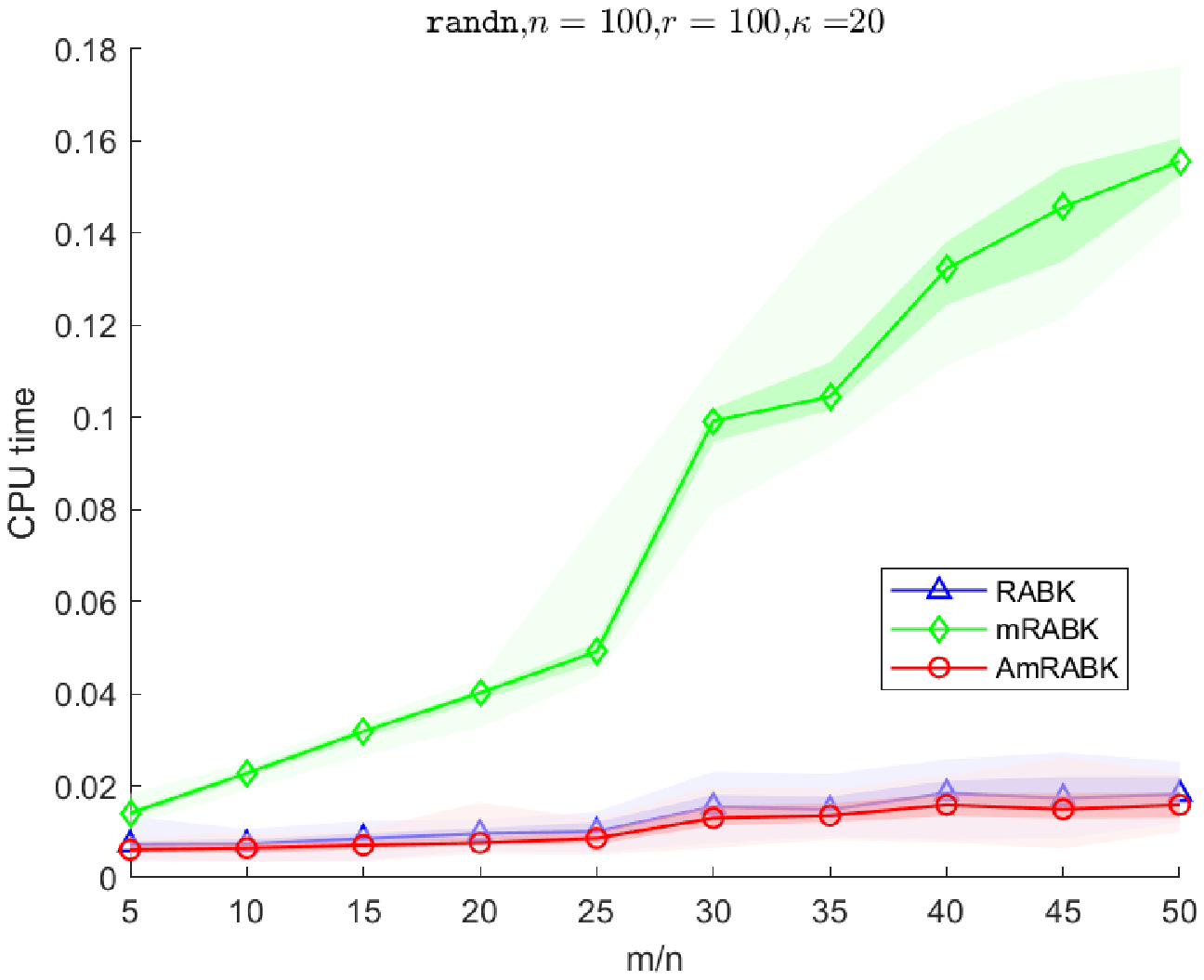}
		\includegraphics[width=0.31\linewidth]{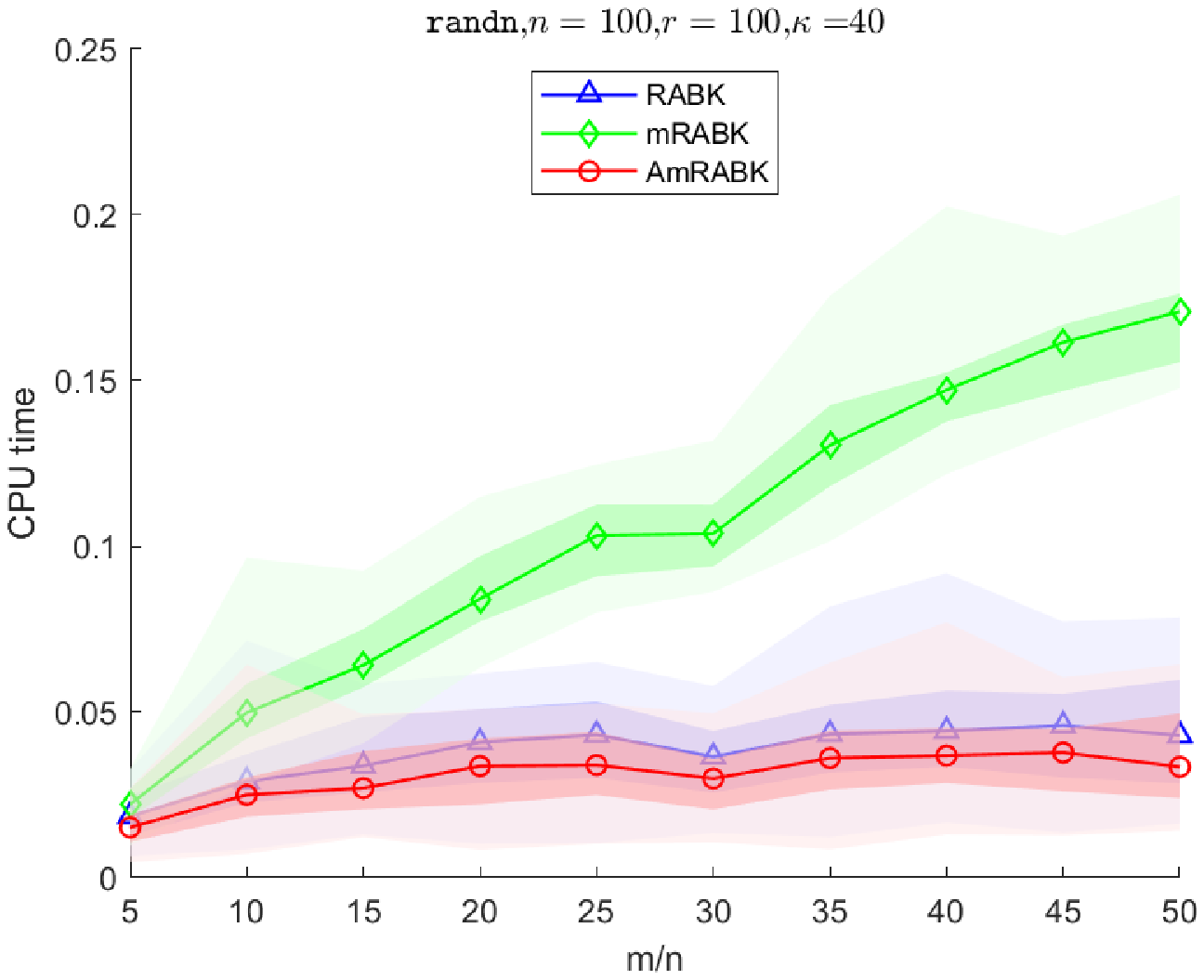}
	\end{tabular}
	\caption{Performance of RABK, mRABK, and AmRABK for linear systems with full rank Gaussian matrix.
		Figures depict the iteration and the CPU time (in seconds) vs increasing number of rows.  The title of each plot indicates the values of $n$, $r$, and $\kappa$. We set $p=30$ and $\beta=0.7$ for mRABK. All computations are terminated once $\text{RSE}<10^{-12}$.}
	\label{figure1}
\end{figure}

\begin{figure}[tbhp]
	\centering
	\begin{tabular}{cc}
		\includegraphics[width=0.31\linewidth]{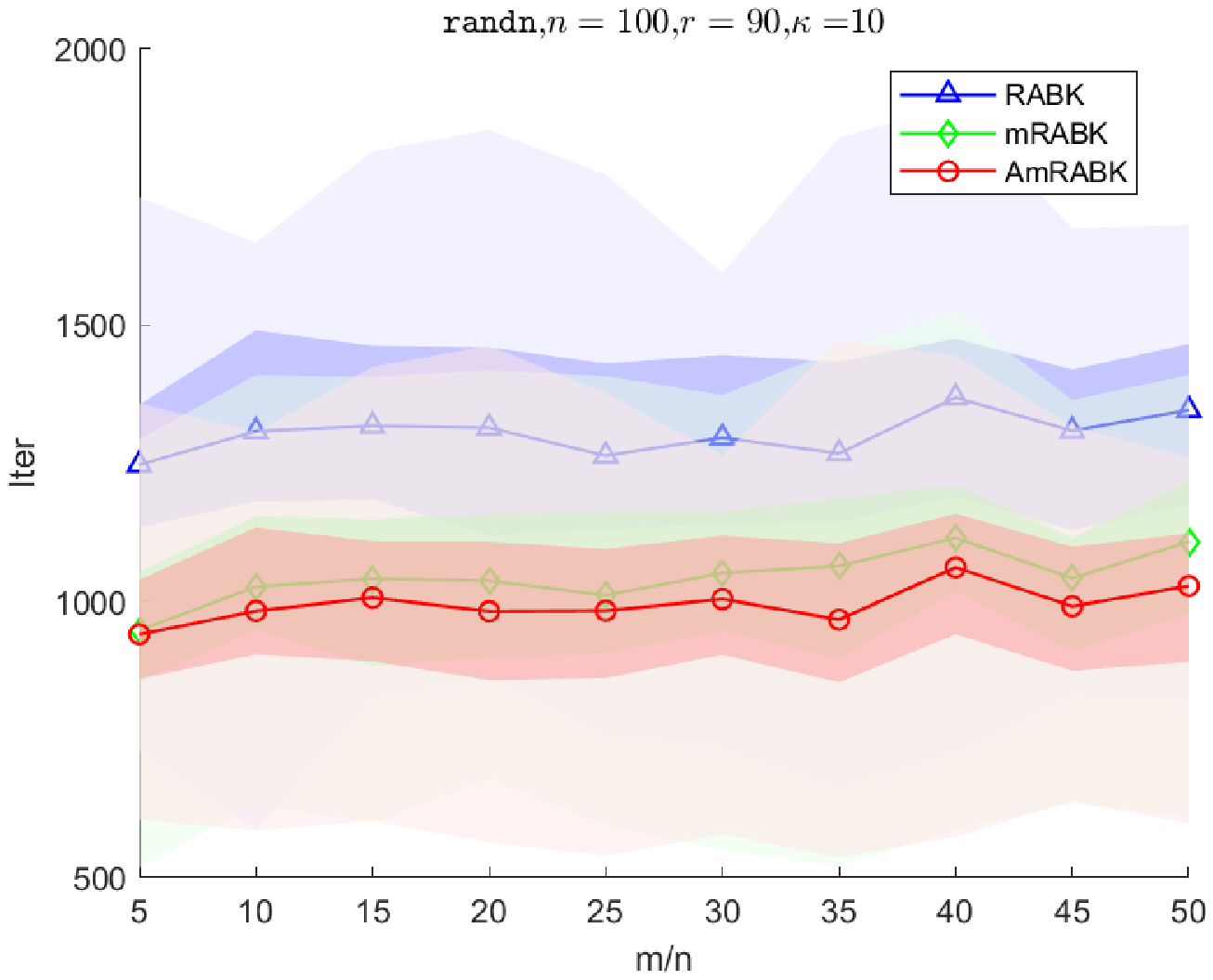}
		\includegraphics[width=0.31\linewidth]{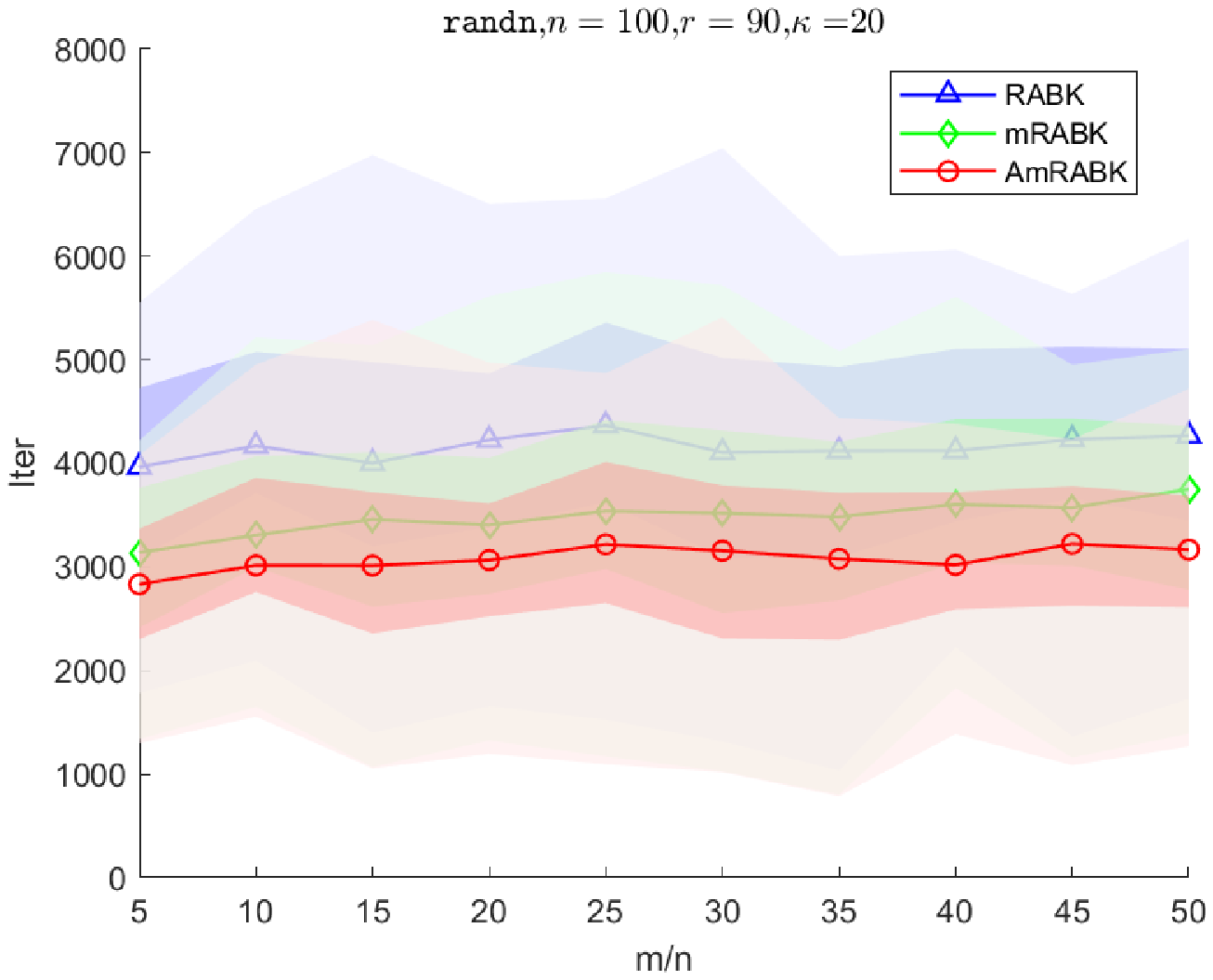}
		\includegraphics[width=0.31\linewidth]{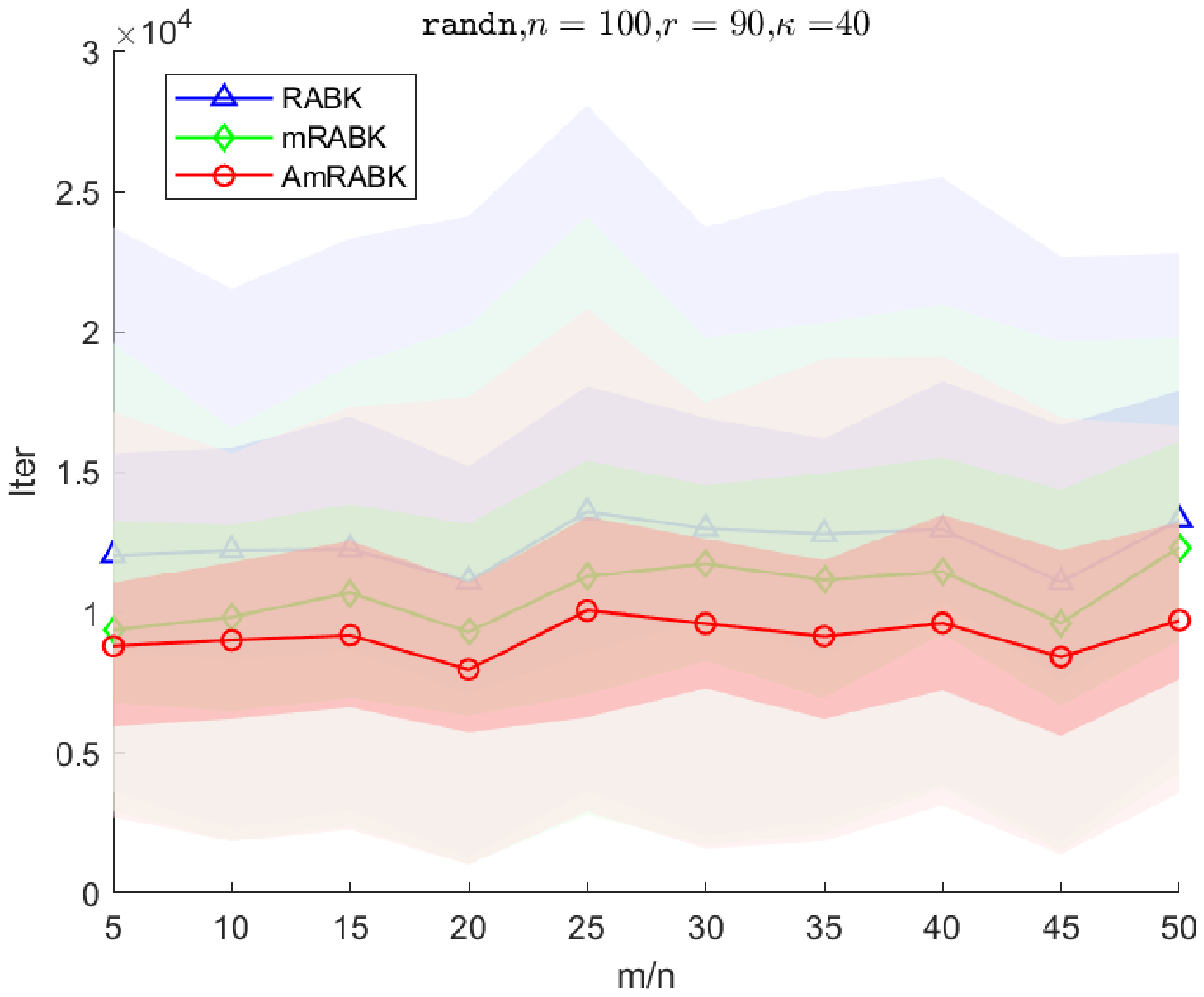}\\
		\includegraphics[width=0.31\linewidth]{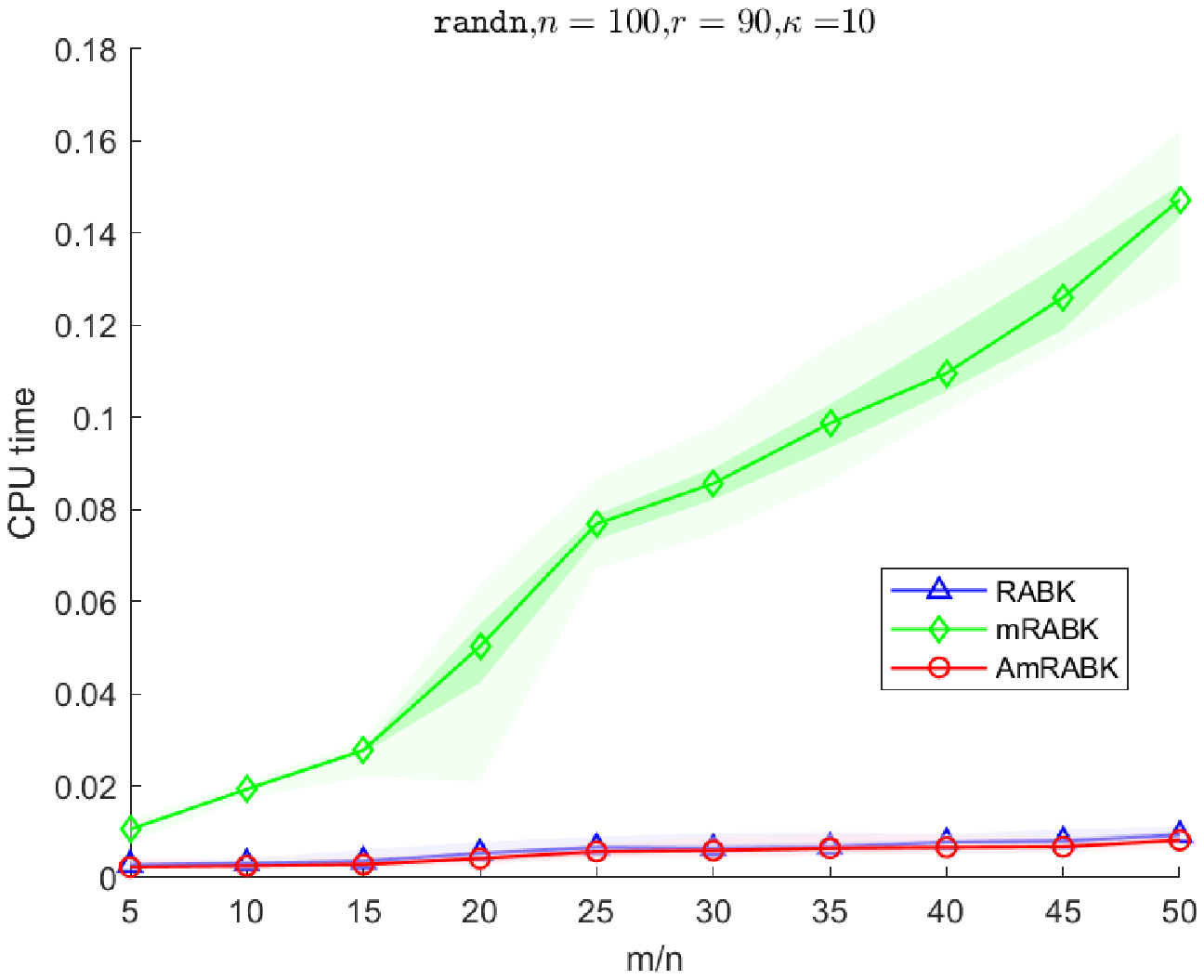}
		\includegraphics[width=0.31\linewidth]{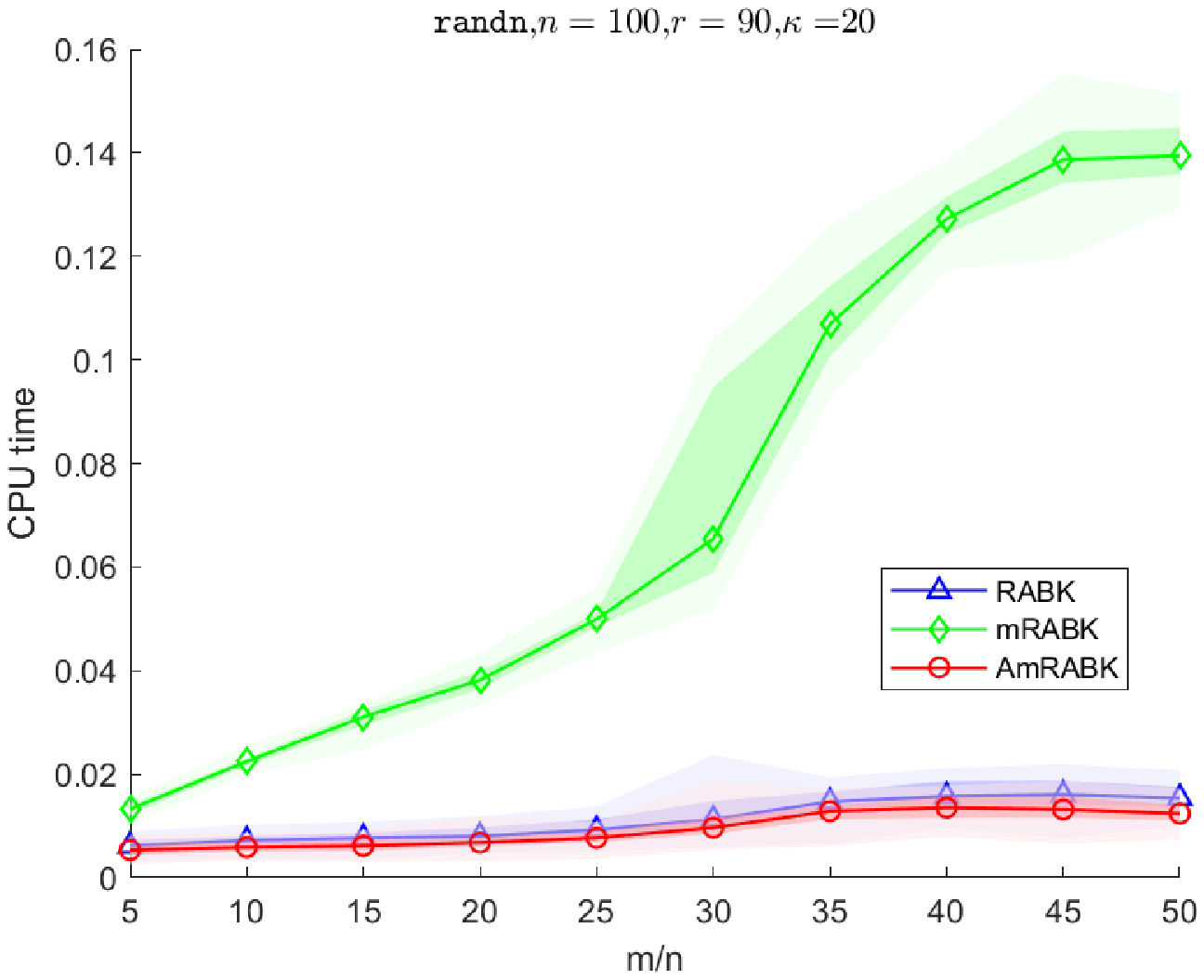}
		\includegraphics[width=0.31\linewidth]{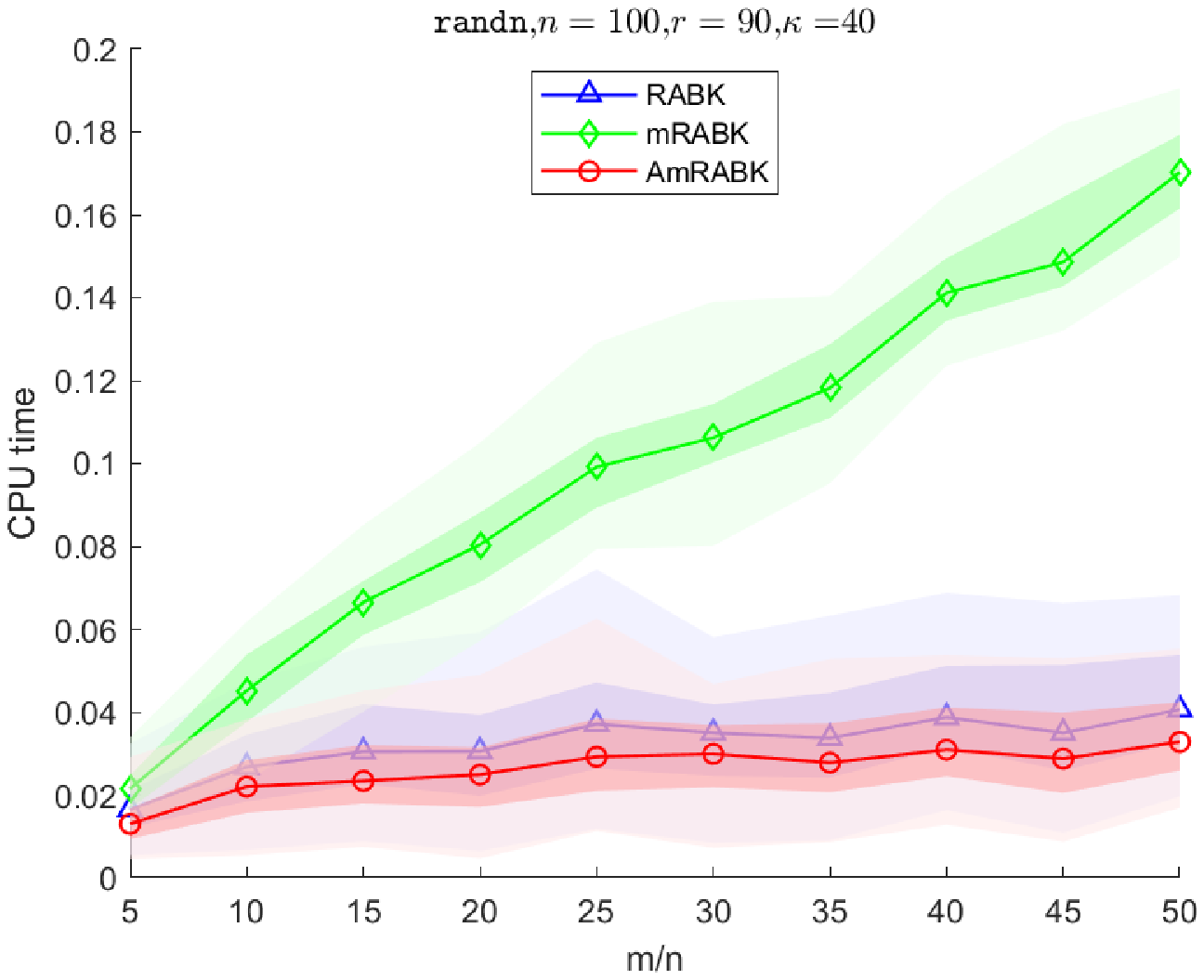}
	\end{tabular}
	\caption{Performance of RABK, mRABK, and AmRABK for linear systems with rank-deficient Gaussian matrix.
		Figures depict the iteration and the CPU time (in seconds) vs increasing number of rows.  The title of each plot indicates the values of $n$, $r$, and $\kappa$. We set $p=30$ and $\beta=0.7$ for mRABK. All computations are terminated once $\text{RSE}<10^{-12}$. }
	\label{figure2}
\end{figure}

Table \ref{table1} and Figure \ref{figure3} present the number of  iterations and the computing time for RABK, mRABK, and AmRABK when applied to sparse matrices from SuiteSparse Matrix Collection \cite{Kol19} and LIBSVM \cite{chang2011libsvm}. The nine matrices from SuiteSparse Matrix Collection are {\tt bibd\_16\_8}, {\tt crew1}, {\tt WorldCities}, {\tt nemsafm}, {\tt model1}, {\tt ash958}, {\tt ch8\_8\_b1}, {\tt Franz1}, and {\tt mk10-b2}, and the four matrices from LIBSVM are {\tt a9a}, {\tt aloi}, {\tt cod-rna}, and {\tt ijcnn1}, some of which are full rank while the others are rank-deficient.

\begin{table}
	\renewcommand\arraystretch{1.5}
	\setlength{\tabcolsep}{2pt}
	\caption{ The average Iter and CPU of RABK, mRABK, and AmRABK for linear systems with coefficient matrices from SuiteSparse Matrix Collection \cite{Kol19}. We set $p=30$ and the appropriate momentum
		parameters $\beta$ for mRABK are also listed. All computations are terminated once $\text{RSE}<10^{-12}$.}
	\label{table1}
	\centering
	{\scriptsize
		\begin{tabular}{  |c| c| c| c| c |c |c |c |c| c|c|  }
			\hline
			\multirow{2}{*}{ Matrix}& \multirow{2}{*}{ $m\times n$ }  &\multirow{2}{*}{rank}& \multirow{2}{*}{$\frac{\sigma_{\max}(A)}{\sigma_{\min}(A)}$}  &\multicolumn{2}{c| }{RABK}  &\multicolumn{3}{c| }{mRABK} &\multicolumn{2}{c| }{AmRABK}
			\\
			\cline{5-11}
			& &   &    & Iter & CPU    & Iter & CPU &$\beta$  & Iter & CPU     \\
			\hline
			{\tt bibd\_16\_8}& $120\times12870$ &  120  & 9.54 &        1052.50&   0.6871&   381.12&  0.2667 &0.70&  252.94 &  {\bf 0.2235}   \\
			\hline
			{\tt crew1} & $135\times6469 $ &  135  &18.20  & 1346.84&   0.1978&  1145.22&  0.2217 &0.80&  718.28 &  {\bf0.1171} \\
			\hline
			{\tt WorldCities} & $315\times100$ &  100  &6.60  & 10990.22&   0.0198&  5605.00&  0.0108 &0.90& 2566.06 &  {\bf 0.0054}  \\
			\hline
			{\tt nemsafm} & $334\times 2348$ &   334  &4.77  &    10974.36&   0.0201& 28218.48&  0.0511 &0.50& 2595.38 &  {\bf 0.0056}  \\
			\hline
			{\tt model1} & $  362\times798 $ &  362  & 17.57 &4111.22&   0.0297& 12643.66&  0.0931 &0.50& 3005.20 &  {\bf 0.0221}\\
			\hline
			{\tt ash958} & $958\times292$ &  292  &3.20  & 423.14&   0.0018&   461.52&  0.0034 &0.60&  409.74 & {\bf 0.0018} \\
			\hline
			{\tt ch8\_8\_b1} & $1568\times64$ &   63  & 3.48e+14  &   65.98&   0.0013&    84.04&  0.0169 &0.30&   65.48 &  {\bf0.0010}   \\
			\hline
			{\tt Franz1} & $ 2240\times768 $ &  755  & 2.74e+15 & 2620.76&   {\bf0.0118}&  1963.64&  0.0373 &0.70& 2571.78 &   0.0123 \\
			\hline
			{\tt mk10-b2} & $  3150\times630 $ &  586 & 2.74e+15 &574.76&   0.0055&   586.60&  0.0405 &0.40&  573.96 &  {\bf0.0054} \\
			\hline
		\end{tabular}
	}
\end{table}

\begin{figure}[tbhp]
	\centering
	\begin{tabular}{cc}
		\includegraphics[width=0.31\linewidth]{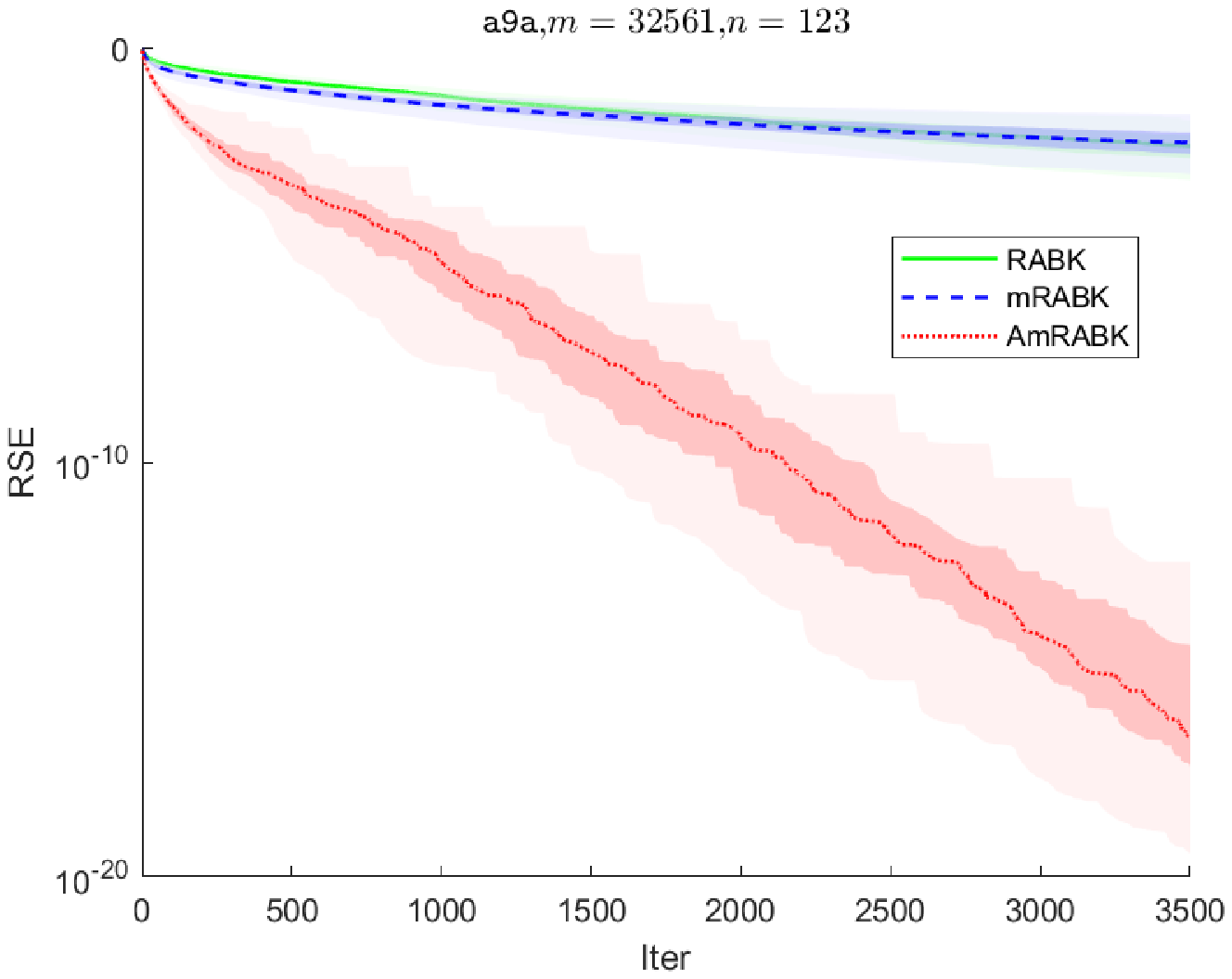}
		\includegraphics[width=0.31\linewidth]{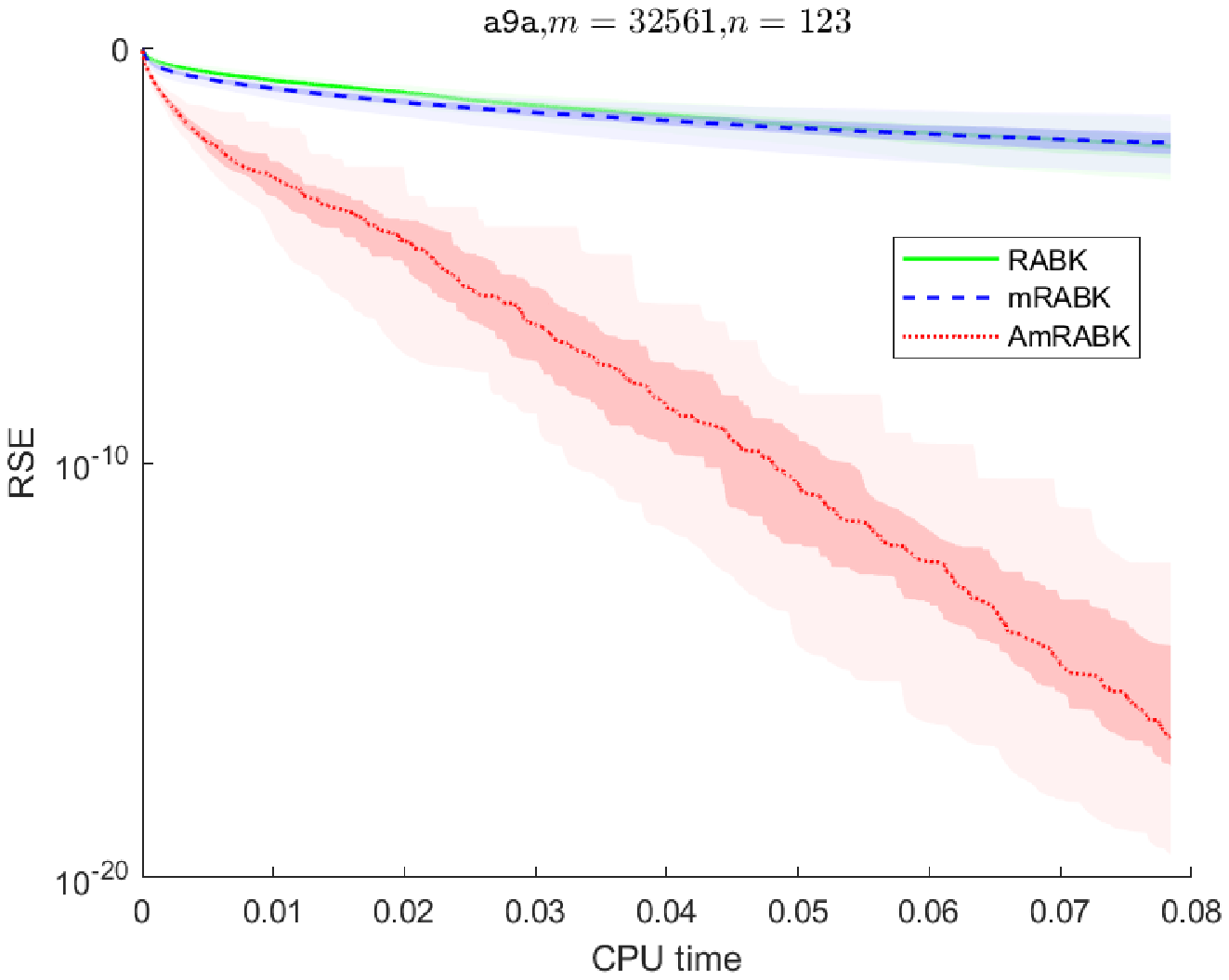}\\
		\includegraphics[width=0.31\linewidth]{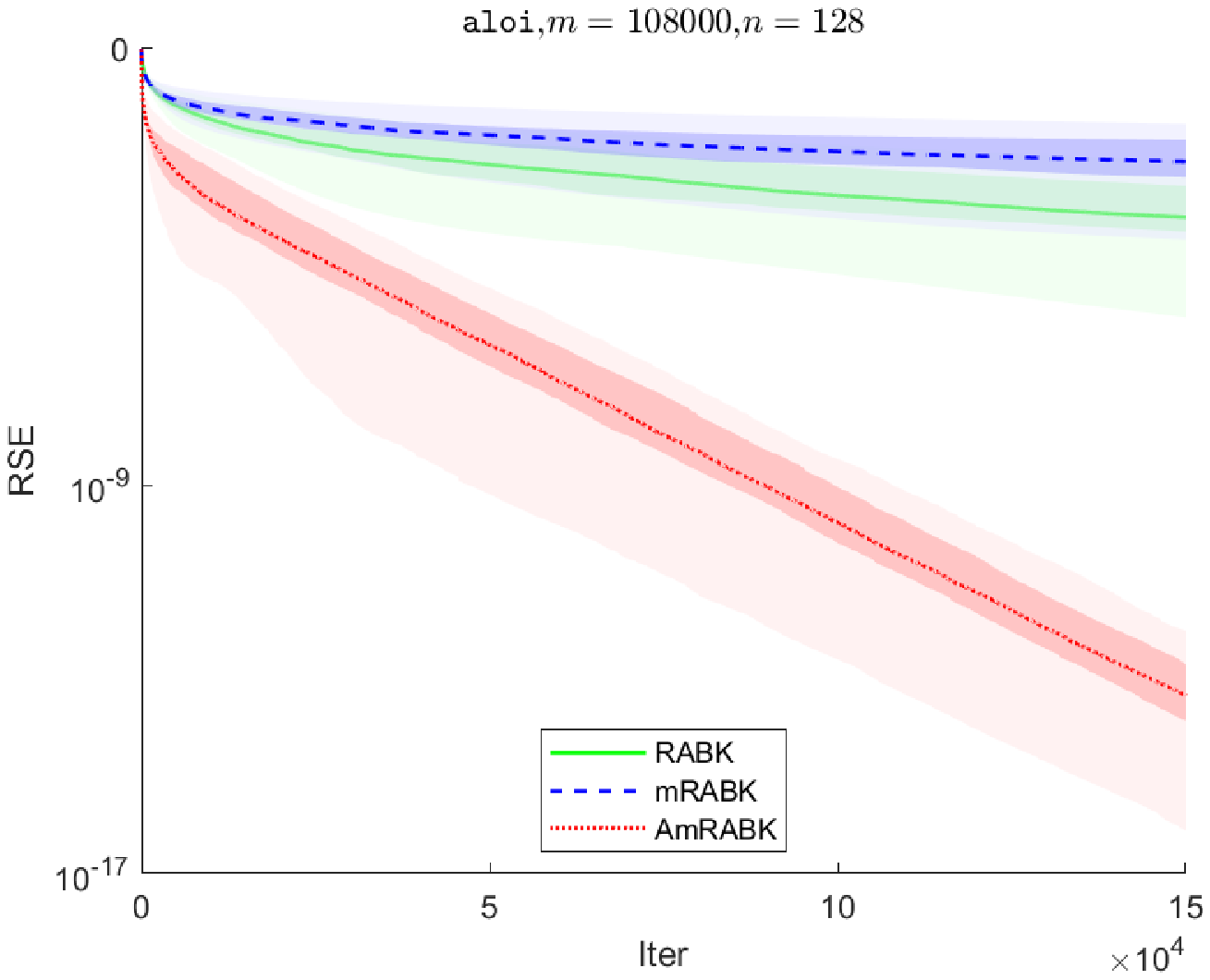}
		\includegraphics[width=0.31\linewidth]{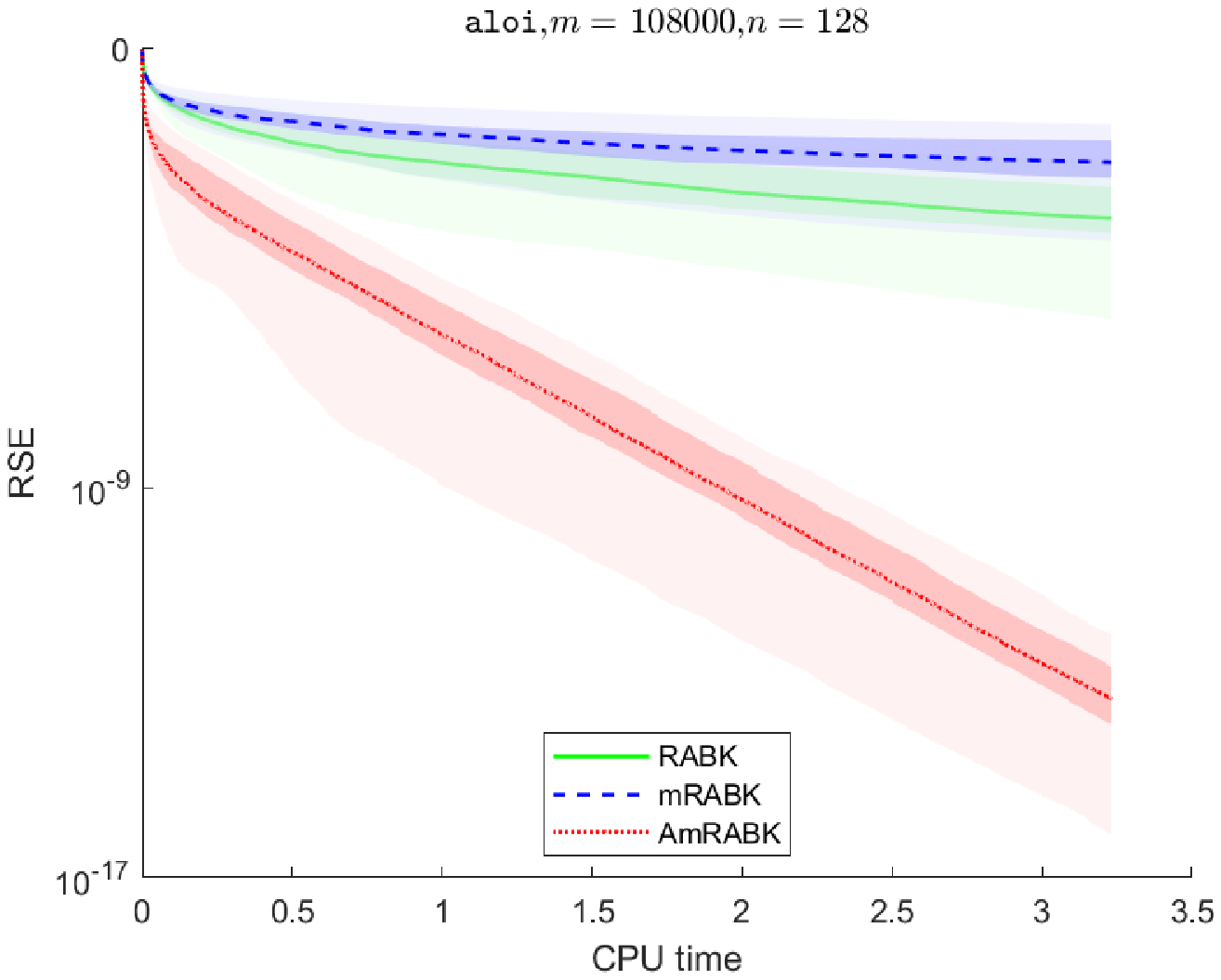}\\
		\includegraphics[width=0.31\linewidth]{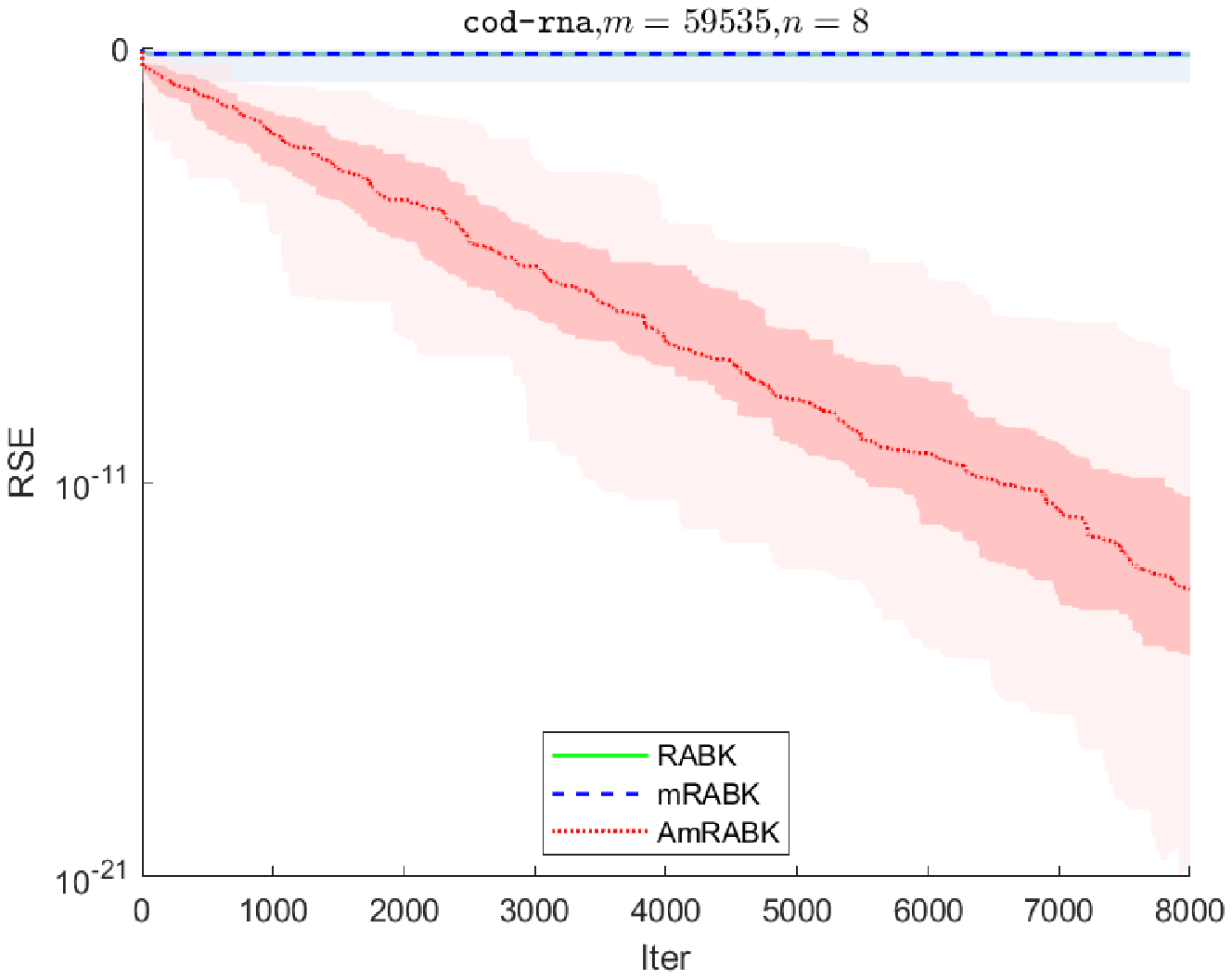}
		\includegraphics[width=0.31\linewidth]{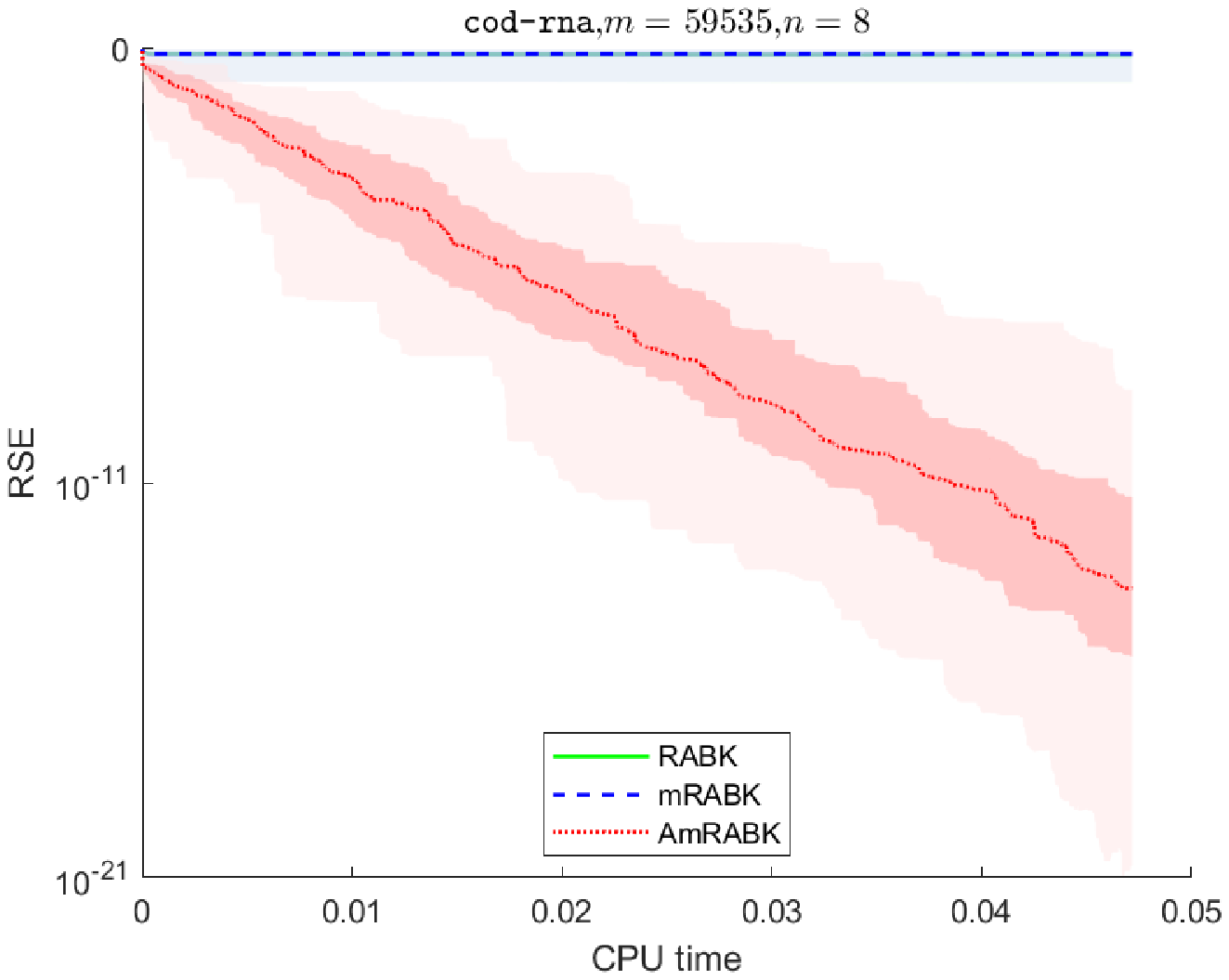}\\
		\includegraphics[width=0.31\linewidth]{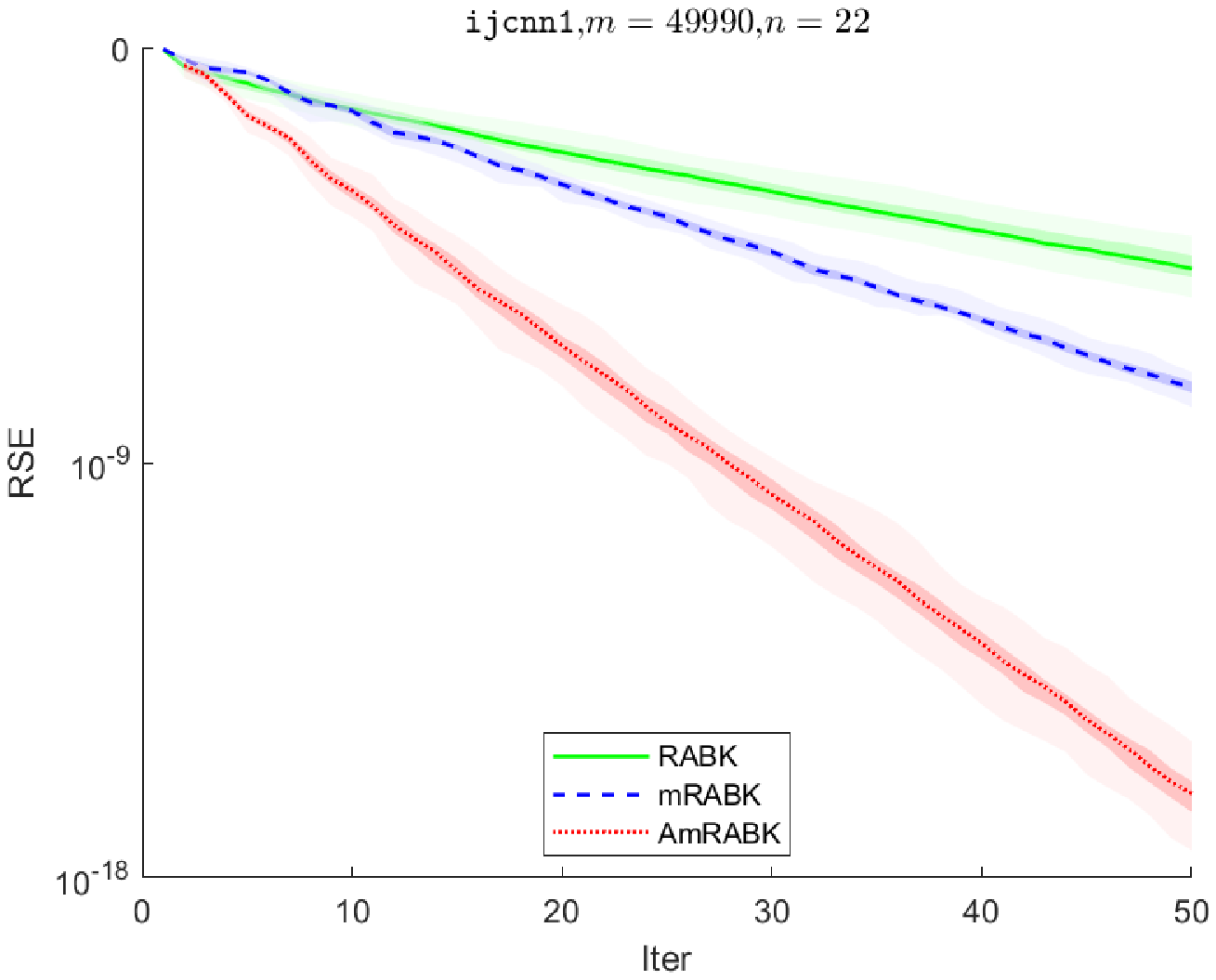}
		\includegraphics[width=0.31\linewidth]{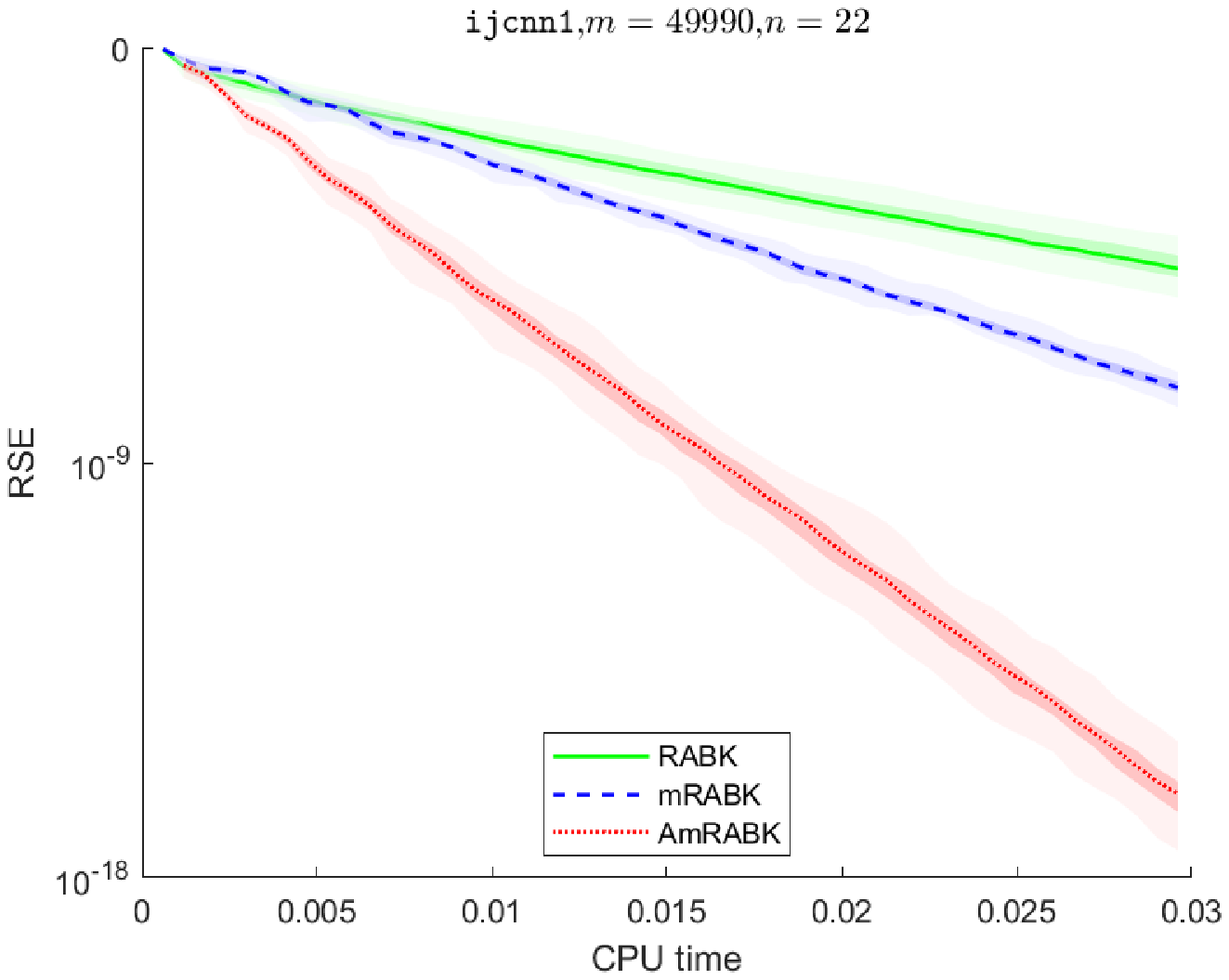}
	\end{tabular}
	\caption{Performance of RABK, mRABK, and AmRABK for linear systems with coefficient matrices from  LIBSVM \cite{chang2011libsvm}.
		Figures depict the evolution of RSE with respect to the number of iterations and the CPU time (in seconds).  
		The title of each plot indicates the names and sizes of the data. The appropriate momentum parameters $\beta$ for mRABK are $\beta=0.9$ for {\tt a9a}, $\beta=0.8$ for {\tt aloi}, $\beta=0.9$ for {\tt cod-rna}, and $\beta=0.7$ for {\tt ijcnn1}. We set the block size $p=300$ and stop the
		algorithms if the number of iterations exceeds a certain limit. 
	}
	\label{figure3}
\end{figure}

Table \ref{table1} presents the numerical results on the matrices from SuiteSparse Matrix Collection \cite{Kol19}.
As depicted in Table \ref{table1}, AmRABK consistently outperforms RABK in terms of iterations across all test cases. However, we note that RABK  may outperform AmRABK in terms of CPU time in specific cases, such as {\tt Franz1}. This is because  AmRABK requires more computations at each step compared to RABK.
Furthermore, Table \ref{table1} also presents that  AmRABK  performs better than mRABK in terms of CPU time across all test cases. The mRABK method and the RABK method may exhibit variable performance, with instances where mRABK performs well and others where RABK outperforms it.

Figure \ref{figure3} presents the numerical results on the matrices from LIBSVM \cite{chang2011libsvm}. 
We note that the CPU time used for obtaining the parameter $\alpha_k$ in  mRABK is excluded from these results. 
Nevertheless, our results demonstrate that AmRABK still outperforms mRABK. In addition, for dataset {\tt cod-rna}, it can be observed that both RABK and mRABK fail, while only AmRABK succeeds in finding the solution.

\subsection{Comparison to {\tt pinv} and {\tt lsqminnorm}}
In this subsection,  we compare the performance of  AmRABK with  {\sc Matlab} functions  {\tt pinv} and {\tt lsqminnorm}.
To obtain the least-norm solution easily,  we use the randomly generated matrices with full column rank, where $m\geq n$ and $r=n$. Then, we generate the solution vector $x^*$ by setting $x^*={\tt randn(n,1)}$, and calculate $b=Ax^*$. It can be guaranteed that $x^*$ is the desired unique solution of the constructed linear system.

We terminate AmRABK if the accuracy of its approximate solution is comparable to that of the approximate solution obtained by {\tt pinv} and {\tt lsqminnorm}. 
Figure \ref{RfigureM1} shows the CPU time against the increasing number of rows with number of columns fixed. It is noteworthy that  AmRABK outperforms {\tt pinv} and {\tt lsqminnorm}, when the number of rows exceeds certain thresholds. Moreover, the performance of  AmRABK  is more sensitive to the increase in the condition number $\kappa$, as suggested by the convergence bound \eqref{upper-bound}.

\begin{figure}[tbhp]
	\centering
	\begin{tabular}{cc}
		\includegraphics[width=0.31\linewidth]{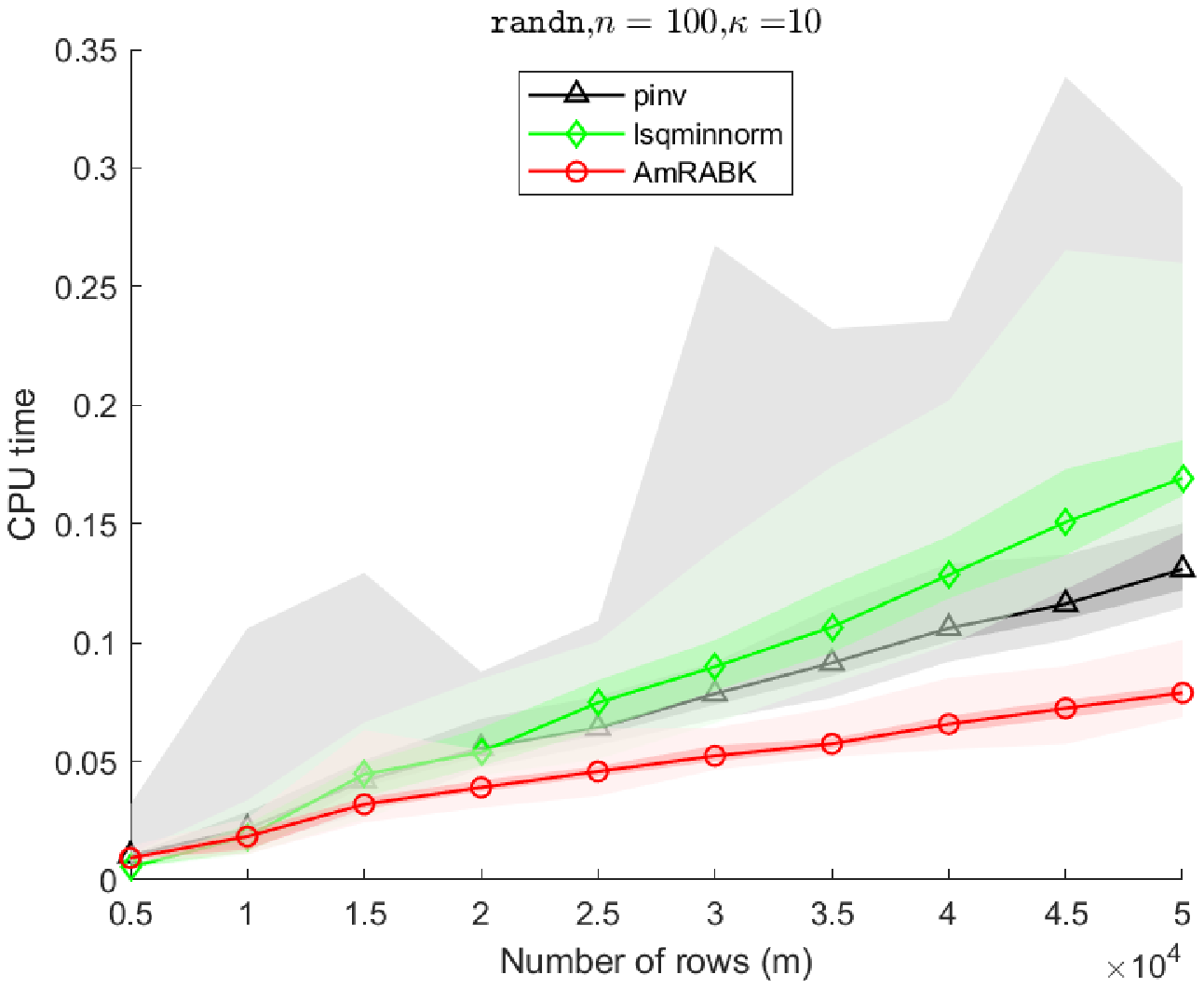}
		\includegraphics[width=0.31\linewidth]{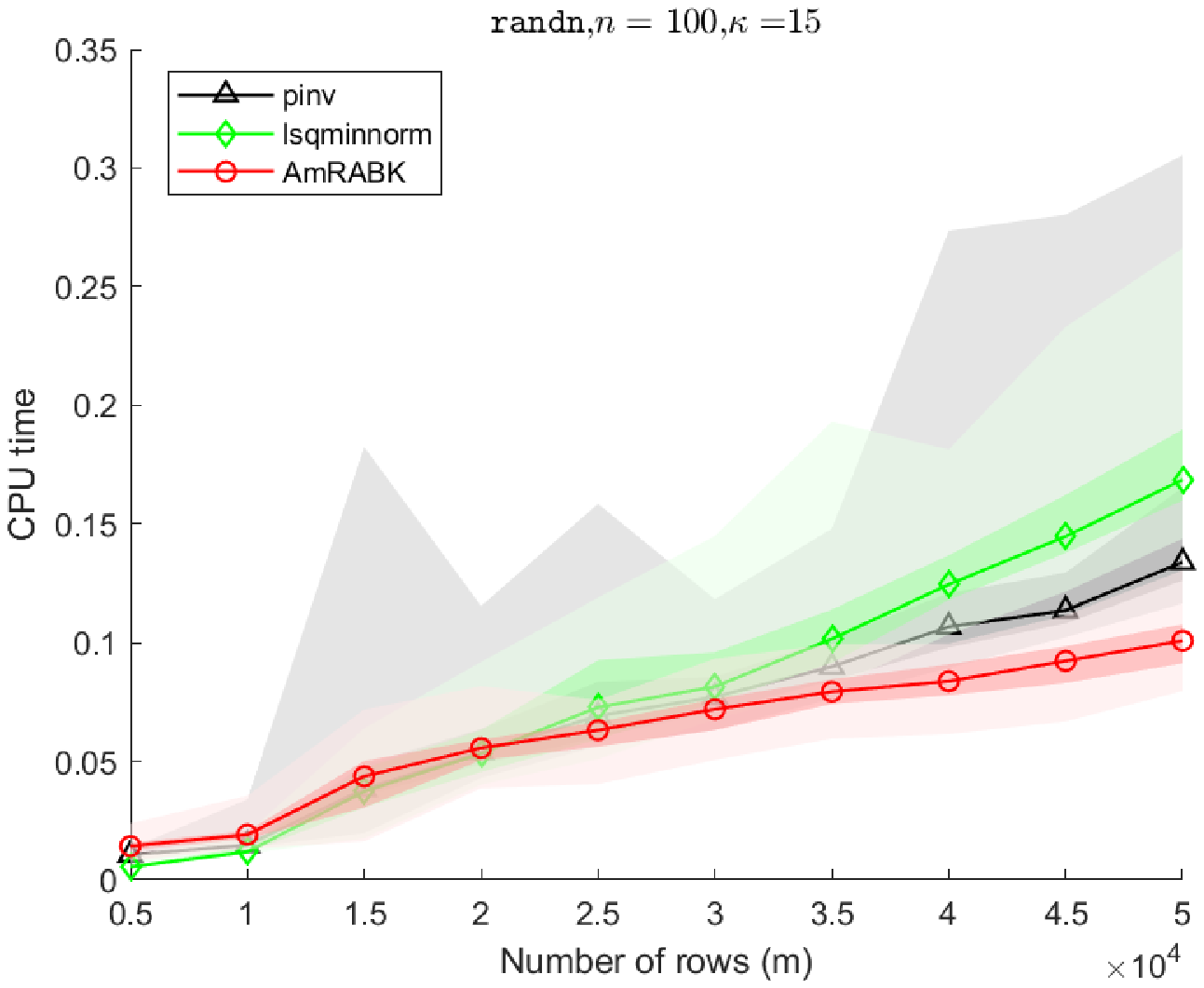}
		\includegraphics[width=0.31\linewidth]{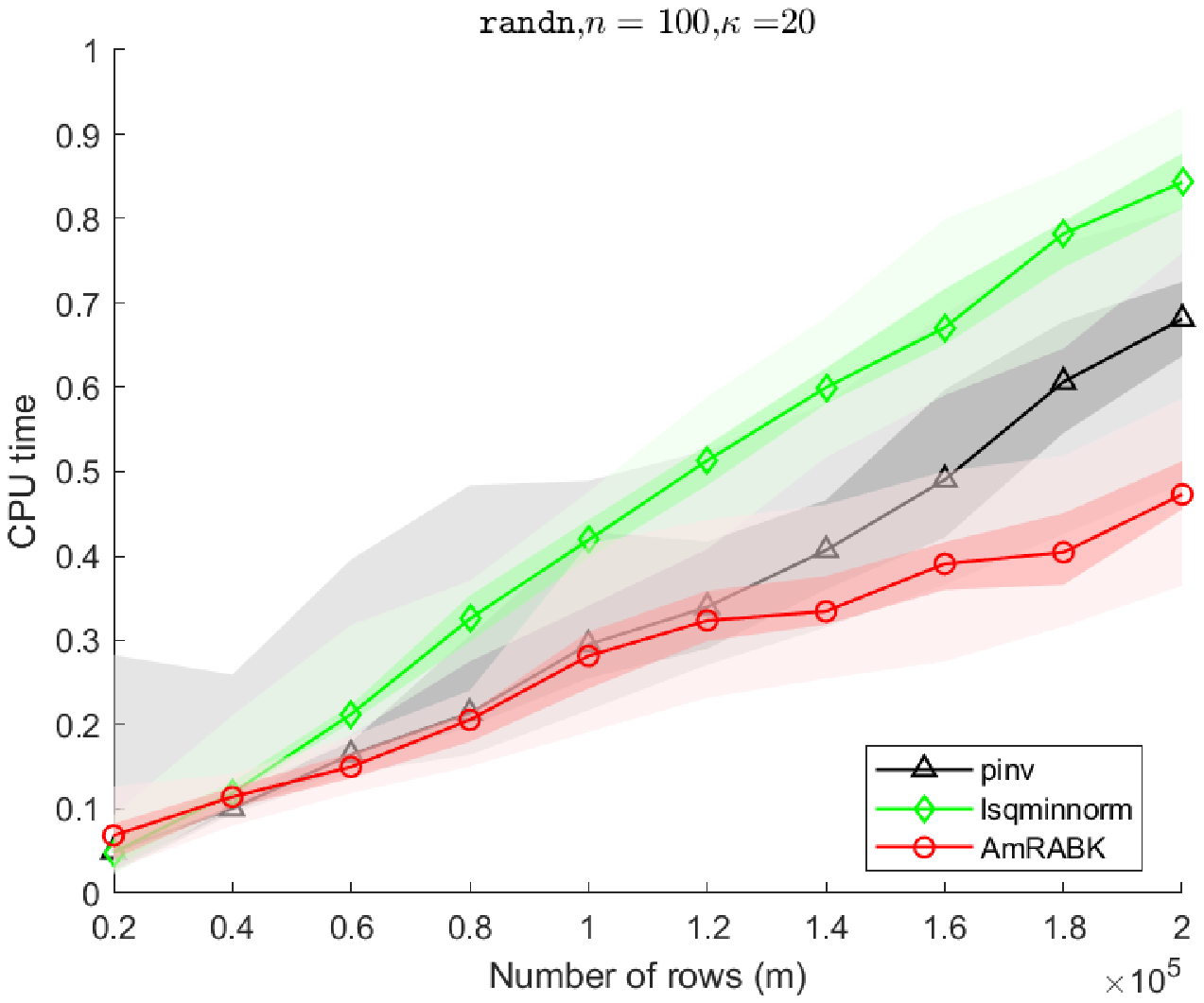}
	\end{tabular}
	\caption{Figures depict the CPU time (in seconds) vs increasing number of rows. The title of each
		plot indicates the values of $n$ and $\kappa$.  We set $p=30$. }
	\label{RfigureM1}
\end{figure}

\section{Concluding remarks}

We have established an adaptive stochastic heavy ball momentum (ASHBM) method, whose parameters do not rely on prior knowledge, for solving  stochastic problems  reformulated from linear systems.  Theoretically, the convergence factor of the ASHBM method is better than that of the basic method (SGD with stochastic Polyak step-size). The deterministic version of our method is serendipitously equivalent to  the conjugate gradient normal equation error (CGNE) method. The ASHBM method was further generalized to obtain a  completely new framework of stochastic conjugate gradient (SCG) method. Numerical results confirmed the efficiency of the ASHBM method.

The linear systems arising in practical problems are very likely to be inconsistent due to noise, which contradicts the basic assumption in this paper.  It should  be a valuable topic to explore the extension of the ASHBM method to handle inconsistent linear systems.
Moreover, we note that the CG method applied to the normal equation $A^\top Ax=A^\top b$ would result in the \emph{conjugate gradient normal equation residual} (CGNR) method \cite[Section 11.3.9] {golub2013matrix}.  It is also a valuable topic to investigate the relationship between the CGNR method and the randomized coordinate descent (RCD) method \cite{Lev10} with adaptive heavy ball momentum.

\bibliographystyle{plain}
\bibliography{zeng2023}

\end{document}